\def\cl#1{{#1}^{\rm cl}}
\def\AO{{\mathcal  A}}
\def\sact{*}
\def\np{\bigskip\noindent}
\def\nl{\smallskip\noindent}
\def\disregard#1{}
\def\op{{\rm op}}
\def\GL{{\rm GL}}
\def\Supp{{\rm Supp}}
\def\M{{\rm M}}
\def\A{{\rm A}}
\def\det{{\rm det}}
\def\C{{\rm {\bf C}}}
\def\H{{\rm {\bf H}}}
\def\TL{{\rm {\bf TL}}}
\def\TB{{\rm {\bf TB}}}
\def\KT{{\rm {\bf KT}}}
\def\BMW{{\rm {\bf B}}}
\def\Br{{\rm {\bf Br}}}
\def\BrD{{\rm{\bf BrD}}}
\def\BrM{{\rm {\bf BrM}}}
\def\BrT{{\rm {\bf BrT}}}
\def\Ar{{\rm {\bf A}}}
\def\ADE{{\rm ADE}}
\def\D{{\rm D}}
\def\E{{\rm E}}
\def\B{{\rm B}}
\def\W{{\rm {\bf W}}}
\def\Sym{{\rm \Sigma}}
\def\hgt{{\rm ht}}
\def\np{\medskip}
\def\nl{\smallskip\noindent}
\def\isom{\cong}
\def\End{{\rm End}}
\def\Geod{{\rm Geod}}
\def\Proj{{\rm Proj}}
\def\CT{{\rm {\bf CT}}}
\def\spe{extremal}
\def\Z{{\mathbb Z}}
\def\CCC{{\mathbb C}}
\def\Q{{\mathbb Q}}
\def\R{{\mathbb R}}
\def\Co{{\mathcal C}}
\def\Do{{\mathcal D}}
\def\U{{\mathcal U}}
\def\B{{\mathcal B}}
\def\Mon{{\mathcal Mon}}
\newtheorem{Thm}{\bf{Theorem}}[section]
\newtheorem{Def}[Thm]{\bf{Definition}}
\newtheorem{Lm}[Thm]{\bf{Lemma}}
\newtheorem{Prop}[Thm]{\bf{Proposition}}
\newtheorem{Remark}[Thm]{\bf{Remark}}
\newtheorem{Remarks}[Thm]{\bf{Remarks}}
\renewcommand{\phi}{\varphi}
\DeclareMathOperator{\Aut}{Aut}
\DeclareMathOperator{\tr}{tr}
\DeclareMathOperator{\het}{ht}
\newcommand{\erz} [1] {\mbox{$\langle #1 \rangle$}}
\newcommand{\ov}{\overline}
\newcommand{\sli}{{\mathfrak{sl}}}
\newcommand{\F}{{\mathbb{F}}}
\newcommand{\spa}{\operatorname{span}}
\newcommand{\K}{{\mathbb{K}}}
\newcommand{\N}{{\mathbb{N}}}
\newcommand{\Rad}{\operatorname{Rad}}
\newcommand{\NilRad}{\operatorname{NilRad}}
\newcommand{\SanRad}{\operatorname{SanRad}}
\newcommand{\ch}{\operatorname{char}}
\newcommand{\eps}{\varepsilon}
\newcommand{\im}{{\rm Im}\,}
\newcommand{\adj}{\sim}
\newcommand{\wh}{\widehat}
\newcommand{\g}{{\mathfrak{g}}}
\newcommand{\nk}{{\mathfrak{n}}}
\newcommand{\RR}{{\mathbb{R}}}
\newcommand{\NN}{{\mathbb{N}}}
\newcommand{\ZZ}{{\mathbb{Z}}}
\newcommand{\Ec}{{\mathcal{E}}}
\newcommand{\Dc}{{\mathcal{D}}}
\newcommand{\Lc}{{\mathcal{L}}}
\newcommand{\Rc}{{\mathcal{R}}}
\def\a{\alpha}
\def\b{\beta}
\def\c{\gamma}
\def\e{\epsilon}
\def\s{\sigma}
\def\t{\tau}
\def\gam{\gamma}
\def\alp{\alpha}
\def\la{\langle}
\def\ra{\rangle}
\newcommand{\ident}{1}
\author{Arjeh M. Cohen
\& Di\'{e} A.H. Gijsbers
\& David B. Wales}
\address{Arjeh M. Cohen\\
Department of Mathematics and Computer Science\\
Eindhoven University of Technology\\
POBox 513\\
5600 MB Eindhoven\\
The Netherlands}
\email{A.M.Cohen@tue.nl}
\address{Di\'{e} A.H. Gijsbers\\
}
\email{dahgijsbers@gmail.com}
\address{David B. Wales\\
Mathematics Department\\
Sloan Lab\\
Caltech\\
Pasadena, CA 91125\\
USA}
\email{dbw@its.caltech.edu}
\title{Tangle and Brauer Diagram Algebras of Type $\D_n$}
\date{\today}
\begin{document}

\begin{abstract}
A generalization of the Kauffman tangle algebra is given for Coxeter type
$\D_n$.  The tangles involve a pole of order $2$.  The algebra is shown to be
isomorphic to the Birman-Murakami-Wenzl algebra of the same type.  This result
extends the isomorphism between the two algebras in the classical case, which,
in our set-up, occurs when the Coxeter type is $\A_{n-1}$.  The proof involves
a diagrammatic version of the Brauer algebra of type $\D_n$ of which the
generalized Temperley-Lieb algebra of type $\D_n$ is a subalgebra.

\bigskip\noindent
{\sc keywords:}
associative algebra, BMW algebra, Brauer algebra, Temperley-Lieb algebra,
tangle, Brauer diagram, Coxeter groups

\bigskip\noindent
{\sc AMS 2000 Mathematics Subject Classification:}
16K20, 17Bxx, 20F05, 20F36, 20M05, 57Mxx
\end{abstract}

\maketitle

\section{Introduction}
\label{sec:intro}

In \cite{MorWas}, Morton and Wasserman described an explicit isomorphism
between the Birman-Murakami-Wenzl (BMW) algebra $\BMW(\A_{n-1})$ of type
$\A_{n-1}$, which is given by means of a presentation by generators and
relations, and the Kauffman tangle algebra $\KT(\A_{n-1})$ connected to braids
on $n$ strands. In this paper we introduce a tangle algebra $\KT(\D_{n}) $
with a pole of order two and show that it is isomorphic to the BMW algebra
$\BMW(\D_{n})$ of type $\D_n$.  This construction extends the one of
\cite{MorWas} by a pole of order two. In \cite{All}, Allcock had a similar
pole involving $\D_n$.  In \cite{Har}, R.~H\"aring-Oldenburg deals with the
case of type ${\rm B}_n$ but uses further relations.

We also construct a Brauer diagram algebra $\BrD(\D_{n})$ of type $\D_n$.
This algebra is constructed with a basis of diagrams, much like the diagrams
Brauer used in \cite{Bra}.  The algebra extends Green's \cite{Gre}
diagrammatic description of the Temperley-Lieb algebra of type $\D_n$.

The tangle algebra $\KT(\D_n)$ is introduced in Definition \ref{df:KTDn}
below.  The BMW algebras $\BMW(M)$ were defined for arbitrary graphs $M$ in
\cite{CGW}.  Here, in Definition \ref{BMW-def} below, we introduce an integral
version of these over the domain $R = \Z[\delta^{\pm1},l^{\pm1},m]/((1-\delta
)m -l + l^{-1})$, where $l,m,\delta$ are indeterminates.  The Brauer diagram
algebra $\BrD(\D_n)$ will be defined over the quotient ring $\ov R =
R/(l-1,m)\isom \Z[\delta^{\pm1}]$, see Definition \ref{df:BrD}. As in
the BMW algebra case, for each graph $M$, a Brauer algebra $\Br(M)$ over $\ov
R$ has been defined by generators and relations, see \cite{CFW}.  As described
in \cite{CGWBMW}, modding out $l-1$ and $m$ gives a surjective $R$-equivariant
homomorphism $\mu:\BMW(M)\to \Br(M)$. Our main results can be summarized
as follows, where $n!! = 1 \cdot 3 \cdot 5 \cdots (2n-3)(2n-1)$ and
$d(n) = (2^{n}+1)n!!-(2^{n-1}+1)n!$.

\begin{Thm}\label{th:main}
The algebras mentioned have the following properties for $n\ge2$.
\begin{enumerate}[(i)]
\item % i 
\label{phisurji}
There is a surjective $R$-equivariant homomorphism
$\psi : \KT(\D_n)\to \BrD(\D_n)$.
\item % ii 
\label{rankKTDn}
There is a $\Z[\delta^{\pm1}]$-algebra isomorphism
$\nu: \Br(\D_n)\to \BrD(\D_n)$. Both algebras are free of dimension
$d(n)$.
\item %iii 
\label{BMW2KTiso}
There is an $R$-algebra isomorphism $\phi: \BMW(\D_n) \to \KT(\D_n)$.  Both
algebras are free of dimension $d(n)$.
\item The diagram below of $R$-equivariant homomorphisms
is commutative.
\end{enumerate}
\begin{center}
\begin{tabular}{ccccccc}
$\BMW(\D_n)$&$ \overset{\displaystyle \mu}{\longrightarrow}$&$\Br(\D_n)$\\
$\phi\ \downarrow$&&$\downarrow\ \nu$\\
$\KT(\D_n)$&$\overset{\displaystyle \psi}{\longrightarrow}$&$\BrD(\D_n)$
\end{tabular}
\end{center}
\end{Thm}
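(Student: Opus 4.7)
The plan is to establish the four parts simultaneously by constructing each homomorphism on generators, verifying the defining relations, and then closing the loop with a coincident rank count of $d(n)$.

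For $\phi$, I would send each BMW generator $g_i$ of $\BMW(\D_n)$ to a positive-crossing tangle on adjacent strands, and each $e_i$ to the corresponding hook tangle; the extra generator associated with the branch node of $\D_n$ must be modeled as a tangle that winds once around the order-two pole, so the BMW relations of \cite{CGW} need to be verified inside $\KT(\D_n)$ by Reidemeister and Kauffman skein moves justified carefully in the presence of the pole. For $\psi$, I would send a tangle to its underlying flat Brauer diagram; since each skein identity reduces modulo $(l-1,m)$ to the corresponding relation in $\BrD(\D_n)$, this map is well defined and $R$-equivariant. For $\nu$, I would send each generator of the presentation of $\Br(\D_n)$ from \cite{CFW} to the corresponding flat diagram in $\BrD(\D_n)$, and check the defining relations hold pictorially. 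Commutativity of the square in (iv) is then immediate on generators, since $\mu$ is reduction modulo $(l-1,m)$ and the other three maps agree with the obvious diagrammatic reduction.

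Surjectivity of $\mu$ is given in \cite{CGWBMW}, and $\nu$ is surjective because its image already contains the generators of $\BrD(\D_n)$ by construction; hence $\psi\circ\phi=\nu\circ\mu$ is surjective, which proves (i). For the rank count, the upper bound $\dim_R \BMW(\D_n)\le d(n)$ is recorded in \cite{CGWBMW}, and the lower bound $\dim_{\ov R}\Br(\D_n)\ge d(n)$ comes from \cite{CFW}. To close the sandwich I would prove $\dim_{\ov R}\BrD(\D_n)\le d(n)$, and similarly $\dim_R\KT(\D_n)\le d(n)$, by producing an explicit normal form: every product of generators reduces, via planar isotopy and the pole/loop-removal rules of the algebra, to one of at most $d(n)$ canonical representatives. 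The surjection $\nu$ then becomes an isomorphism of free modules of rank $d(n)$, which proves (ii); and the same rank sandwich, combined with the commutative square, forces $\phi$ and $\psi$ to be isomorphisms, which proves (iii) and completes (i).

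The main technical obstacle will be the normal-form bound on $\BrD(\D_n)$. Because of the pole of order two, diagrams are not classified by underlying matchings alone: one must track how strands wind around the pole modulo the defining relations, and show that every composition reduces canonically to a fixed list of $d(n)$ representatives indexed by the admissible data for $\D_n$. A parallel hurdle is the corresponding control of tangles in $\KT(\D_n)$: the braid-skein identities must be used to push crossings past the pole before isotopy can bring a tangle to canonical form, which is what converts the rank bound into the isomorphisms $\phi$ and $\psi$. The verification of the BMW relations inside $\KT(\D_n)$ near the pole, in particular the quadratic relation expressing $g_i^{-1}$ in terms of $g_i$, $\ident$, and $e_i$, is the remaining diagrammatic check.
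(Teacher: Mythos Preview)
Your overall architecture is close to the paper's, but there is a genuine logical gap in the step that is supposed to yield~(iii). Knowing $\dim_R\BMW(\D_n)=d(n)$ and $\dim_R\KT(\D_n)\le d(n)$, together with the commutative square and surjectivity of $\psi\circ\phi$, does \emph{not} force $\phi$ to be an isomorphism: a map between free $R$-modules of the same finite rank can easily fail to be bijective. What is needed, and what you never state, is that $\phi$ is \emph{surjective}. The paper devotes all of Section~3 to this: it introduces totally descending tangles, evacuates regions of type~E, controls closed strands via $\Theta$ and $\Xi^\pm$, and finally shows (Theorem~3.13) that every reduced tangle is a monomial in the $G_i^{\pm1},E_i$, i.e., lies in the image of $\phi$. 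Only then does the rank equality (coming from $\psi$ surjective onto a rank-$d(n)$ module, plus freeness of $\KT(\D_n)$) make $\phi$ an isomorphism. Your proposed ``normal form for $\KT(\D_n)$ with $\le d(n)$ representatives'' would in fact be \emph{stronger} than what the paper proves---the paper never enumerates $d(n)$ tangles, it only shows they all lie in $\phi(\BMW(\D_n))$ and imports the count from \cite{CGWBMW}---and would yield surjectivity as a byproduct, but you must make that surjectivity explicit rather than appeal to a rank sandwich.

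Two smaller points. First, $\psi$ is not and cannot be an isomorphism: it is an $R$-equivariant ring map from a free $R$-module to a free $\ov R$-module, and the theorem only claims surjectivity. Second, your plan to define $\nu$ on generators and verify relations presupposes that the multiplication on $\BrD(\D_n)$ (Definition~4.3) is already well defined; in the paper this well-definedness is itself nontrivial and is established \emph{via} $\nu$, which is defined on the basis $\{u e_X z v\}$ of $\Br(\D_n)$ from \cite{CFW} and then shown (Proposition~4.6) to transport the algebra structure consistently. If you define $\nu$ on generators, you owe a separate proof that the diagram rewriting rules of Definition~4.3 are consistent.
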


Here, the Coxeter type $\D_n$ is understood to
be $\A_1\A_1$ if $n=2$ and $\A_3$ if $n=3$. 

In \cite{Goo}, Goodman and Hauschild gave a similar construction for
affine BMW algebras and affine Kauffman tangle algebras, with a pole appearing
in the tangles.  Here we also define an $(n,n)$ tangle algebra, denoted
$\KT(\D_n)^{(1)}$, with respect to a pole of order two, cf.\ Definition
\ref{ch5main}.  Our tangle algebra differs from the algebras introduced in
\cite{Goo} in that some of our tangles with twists around the pole cannot be
simplified whereas the tangles of \cite{Goo} can.

The work for $\D_n$ encompasses $\A_{n-1}$.  The precise definition of the
classical tangle algebra $\KT(\A_{n-1})$ can be obtained from
Definition~\ref{ch5main} below after removing the pole and ignoring all
relations connected to it, and so $\KT(\A_{n-1})$ is a subalgebra of
$\KT(\D_n)$. By restriction of $\phi$ we find an isomorphism between the
classical BMW algebra, $\BMW(\A_{n-1})$, and $\KT(\A_{n-1})$.  This gives an
alternative proof to the one by Morton and Wasserman in \cite{MorWas},
cf.~Remark \ref{rmks:AD}(ii) below.

This paper is organized as follows.  In Section \ref{sec:tangleAlgebras}, we
introduce the tangle algebras $\KT(\D_n)^{(1)}$ and $\KT(\D_n)$, recall the
BMW algebra $\BMW(\D_n)$, and exhibit the homomorphism $\phi : \BMW(\D_n)\to
\KT(\D_n)$.  The presentation by generators and relations of $\BMW(\D_n)$
gives rise to a natural homomorphism $\phi: \BMW(\D_n)\to \KT(\D_n)$ of
$R$-algebras.  In Section \ref{sec:totallyDescTangles}, we introduce totally
descending tangles for which Reidemeister moves can be made.  We also find a
standard expression for tangles in terms of closed strands and twists around
the pole.  This will enable us to prove that $\phi$ is surjective, see
Theorem~\ref{th:surjhomo}. By \cite[Theorem 1.1]{CGWBMW} the dimension of
$\KT(\D_n)$ is at most $d(n)$.  In Section \ref{sec:BrauerTypeD} we deal with
the Brauer diagram algebra of type $\D_n$. Knowledge from \cite{CGW} helps us
to identify $\BrD(\D_n)$ with the Brauer algebra $\Br(\D_n)$, see Proposition
\ref{prop:AfterBMW}. This takes care of Theorem \ref{th:main}(ii).  There is a
surjective homomorphism $\psi$ of rings from the tangle algebra $\KT(\D_{n}) $
onto the Brauer diagram algebra $\BrD(\D_{n})$, see Proposition
\ref{homomKTDn2CDn}; this establishes Theorem \ref{th:main}(i) and helps us
find a lower bound for the dimension of $\KT(\D_n)$.  These facts are used in
the isomorphism proof of $\BMW(\D_n)$ and $\KT(\D_n)$ in Theorem
\ref{th:MainIso}, which settles Theorem \ref{th:main} (iii) and (iv).

We finish by discussing a slightly larger tangle algebra, $\KT(\D_{n})^{(2)}$,
for which we also provide a presentation by means of generators and relations.

The work reported here grew out of the Ph.~D.~thesis of one of us,
\cite{DAHG}. The other two authors wish to acknowledge Caltech and Technische
Universiteit Eindhoven for enabling mutual visits.

\section{Tangle algebras of type $\A_{n-1}$ and $\D_n$}
\label{sec:tangleAlgebras}
Let $M$ be a Coxeter diagram of rank $n$ without multiple bonds. We define
the BMW algebra by means of $2n$ generators and eleven kinds of relations.
For each node $i$ of the diagram $M$ we define two generators $g_i$ and $e_i$
with $i = 1,\ldots,n$. If two nodes are connected in the diagram we write $i
\sim j$, with $i,j$ the indices of the two nodes, and if they are not
connected we write $i \not \sim j$.  In this paper we will only be needing
$M$ of type $\A_{n-1}$ and $\D_n$.

\begin{Def}\label{BMW-def}
\rm
Let $M$ be a Coxeter diagram of rank $n$ without multiple bonds. The {\rm BMW} algebra of
type $M$ is the algebra, denoted by $\BMW(M)$, with unit element, over $R$,
whose presentation is given on generators $g_i$ and
$e_i$ ($i=1,\ldots,n$) by the following defining relations.

\begin{center}
\begin{tabular}{lcllr}
(B1)&\qquad& $g_ig_j=g_jg_i$& when &$i \not \sim j$,\\
(B2)&\qquad& $g_ig_jg_i=g_jg_ig_j$& when &$i\sim j$,\\
(D1)&\qquad& $me_i=l(g_i^2+mg_i-1)$& for all &$i$,\\
(R1)&\qquad& $g_ie_i=l^{-1}e_i$& for all &$i$,\\
(R2)&\qquad& $e_ig_je_i=le_i$& when &$i \sim j$,\\
(RSer)&\qquad& $e_ig_i=l^{-1}e_i$& for all &$i$,\\
(HSee)\label{e-sQ-eQ}&\quad& $e_i^2=\delta e_i$& for all &$i$,\\
(HCer)&\quad& $e_ig_j= g_je_i$& when &$i \not \sim j$,\\
(HCee)&\quad& $e_ie_j= e_je_i$& when &$i \not \sim j$,\\
(RNrre)&\quad& $g_jg_ie_j=e_ie_j$& when &$i\sim j$,\\
(RNerr)&\quad& $e_ig_jg_i=e_ie_j$& when &$i\sim j$.
\end{tabular}
\end{center}
\end{Def}

\np The first two relations are the braid relations commonly
associated with the Coxeter diagram $M$.
Just as for
Artin and Coxeter groups, if $M$ is the disjoint union of two diagrams $M_1$
and $M_2$, then $\BMW$ is the direct sum of the two BMW algebras $\BMW(M_1)$ and
$\BMW(M_2)$. For the solution of many problems concerning $\BMW$, this gives an easy reduction to
the case of connected diagrams $M$.

If $S$ is a ring containing $R$ in which $m$ is invertible, only the first
five relations are needed as defining relations for
$\BMW(M)\otimes_R S$; this is shown in \cite{CGW}.
It also follows from arguments of \cite{CGW} that the $g_i$ are invertible
elements in $\BMW(M)$, so that there is a group homomorphism from the Artin
group $A$ of type $M$ to the group $\BMW(M)^\times$ of invertible elements
of $\BMW(M)$ sending the $i$-th generator $s_i$ of $A$ to $g_i$.  The fact
that the BMW algebras of type $\A_{n-1}$ coincide with those defined by
Birman and Wenzl in \cite{BirWen} or by Murakami in \cite{Mur} is given in
\cite[Theorem 2.7]{CGW}.

The other kind of algebras to be introduced are tangle algebras over $R$.
We first recall from
\cite{MorWas} the definition of a tangle as a piece of a link diagram in the
plane.  
A $(k,n)$-tangle is a piece of a knot diagram in $ \R^2 \times [0,1]$,
consisting of piece-wise linear curves, called {\em strands}, such that every
strand intersects the boundary of $\R^2 \times [0,1]$ transversally in either
none or two of the points from $K = \{(1,0,1),\ldots,(k,0,1)\}\cup
\{1,0,0),\ldots,(n,0,0)\}$ and such that $K$ is the set of endpoints of
strands.  The elements of $K$ are called the {\em endpoints} of the tangle.  A
crossing of two strands is called {\em positive} if the strand moving from top
right to bottom left crosses over the other strand; the opposite crossing will
be called {\em negative}.

\begin{figure}[htbp]
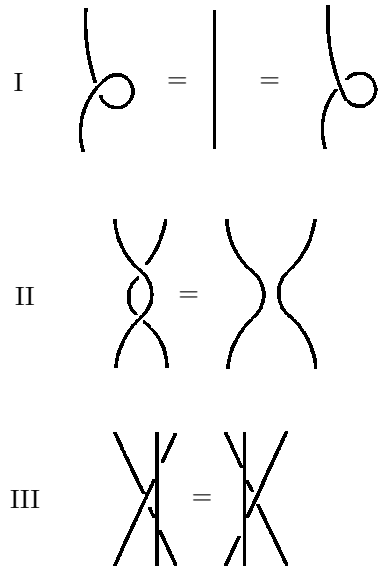

% [inline block 0: 3 envs, 48924 chars -> data_tex | \begin{picture}(144.8,60.47)(-20,0) \linethickness{0.3mm} \put(54,5){\line(0,1){52}} \linethickness{0.3mm}...]

\caption{Reidemeister moves I, II, and III} \label{pic:reidem}
\end{figure}

Two tangles are ambient isotopic if they are related by a sequence of
Reidemeister moves I, II, and III (see Figure~\ref{pic:reidem}) together with
isotopies of $\R^2\times [0,1]$ fixing the boundary.  It is well known that
the closures of two tangles represent the same knot up to isotopy if and only
if they are ambient isotopic. Here we will restrict attention to regular
isotopy, cf. \cite{MorWas}.

\begin{Def}
\rm
Two tangles
are said to be {\em regularly isotopic} if they are related by a
sequence of only Reidemeister moves II and III together with
isotopies of $\R^2\times [0,1]$ fixing the boundary.
\end{Def}

A $(k,n)$-tangle and an $(n,s)$-tangle can be composed by placing the first
tangle on top of the second and connecting the endpoints at the bottom of the
first tangle to the endpoints at the top of the second and using an
isotopy to move the set of endpoints to their standard positions.

In this paper a new set of tangles is introduced which we will call {\em
tangles of type $\D$}. These tangles have an additional strand, called a {\em
pole}, with properties different from the other, ordinary strands in the
tangle. We start with a general definition of a tangle with a pole.  The pole
will be a vertical axis, which is to the left of $K$ through $(0,0,0)$.

\begin{figure}[htbp]
\unitlength 0.75mm
\begin{picture}(20.07,49.53)(0,0)
\linethickness{1mm}
\put(10,17){\line(0,1){23}}
\linethickness{0.3mm}
\multiput(6.41,29.74)(0.15,0.07){3}{\line(1,0){0.15}}
\multiput(5.95,29.51)(0.15,0.08){3}{\line(1,0){0.15}}
\multiput(5.52,29.25)(0.15,0.09){3}{\line(1,0){0.15}}
\multiput(5.1,28.96)(0.1,0.07){4}{\line(1,0){0.1}}
\multiput(4.7,28.65)(0.1,0.08){4}{\line(1,0){0.1}}
\multiput(4.32,28.31)(0.09,0.08){4}{\line(1,0){0.09}}
\multiput(3.97,27.96)(0.09,0.09){4}{\line(0,1){0.09}}
\multiput(3.63,27.57)(0.08,0.1){4}{\line(0,1){0.1}}
\multiput(3.33,27.17)(0.08,0.1){4}{\line(0,1){0.1}}
\multiput(3.04,26.75)(0.07,0.11){4}{\line(0,1){0.11}}
\multiput(2.79,26.32)(0.09,0.15){3}{\line(0,1){0.15}}
\multiput(2.56,25.86)(0.08,0.15){3}{\line(0,1){0.15}}
\multiput(2.36,25.4)(0.1,0.23){2}{\line(0,1){0.23}}
\multiput(2.19,24.92)(0.08,0.24){2}{\line(0,1){0.24}}
\multiput(2.06,24.43)(0.07,0.24){2}{\line(0,1){0.24}}
\multiput(1.95,23.94)(0.11,0.49){1}{\line(0,1){0.49}}
\multiput(1.87,23.44)(0.08,0.5){1}{\line(0,1){0.5}}
\multiput(1.83,22.93)(0.04,0.5){1}{\line(0,1){0.5}}
\multiput(1.82,22.43)(0.01,0.51){1}{\line(0,1){0.51}}
\multiput(1.82,22.43)(0.02,-0.51){1}{\line(0,-1){0.51}}
\multiput(1.84,21.92)(0.05,-0.5){1}{\line(0,-1){0.5}}
\multiput(1.89,21.42)(0.08,-0.5){1}{\line(0,-1){0.5}}
\multiput(1.98,20.92)(0.12,-0.49){1}{\line(0,-1){0.49}}
\multiput(2.09,20.42)(0.07,-0.24){2}{\line(0,-1){0.24}}
\multiput(2.24,19.94)(0.09,-0.24){2}{\line(0,-1){0.24}}
\multiput(2.41,19.46)(0.07,-0.15){3}{\line(0,-1){0.15}}
\multiput(2.62,19)(0.08,-0.15){3}{\line(0,-1){0.15}}
\multiput(2.86,18.55)(0.09,-0.14){3}{\line(0,-1){0.14}}
\multiput(3.12,18.12)(0.07,-0.1){4}{\line(0,-1){0.1}}
\multiput(3.41,17.71)(0.08,-0.1){4}{\line(0,-1){0.1}}
\multiput(3.72,17.31)(0.08,-0.09){4}{\line(0,-1){0.09}}
\multiput(4.06,16.93)(0.09,-0.09){4}{\line(1,0){0.09}}
\multiput(4.43,16.58)(0.1,-0.08){4}{\line(1,0){0.1}}
\multiput(4.81,16.25)(0.1,-0.08){4}{\line(1,0){0.1}}
\multiput(5.21,15.95)(0.14,-0.09){3}{\line(1,0){0.14}}
\multiput(5.64,15.67)(0.15,-0.08){3}{\line(1,0){0.15}}
\multiput(6.08,15.42)(0.15,-0.07){3}{\line(1,0){0.15}}
\multiput(6.53,15.19)(0.23,-0.1){2}{\line(1,0){0.23}}

\linethickness{0.3mm}
\multiput(13,32)(0.48,0.12){1}{\line(1,0){0.48}}
\multiput(13.48,32.12)(0.24,0.07){2}{\line(1,0){0.24}}
\multiput(13.96,32.26)(0.23,0.08){2}{\line(1,0){0.23}}
\multiput(14.42,32.43)(0.23,0.1){2}{\line(1,0){0.23}}
\multiput(14.88,32.62)(0.15,0.07){3}{\line(1,0){0.15}}
\multiput(15.32,32.84)(0.14,0.08){3}{\line(1,0){0.14}}
\multiput(15.75,33.09)(0.14,0.09){3}{\line(1,0){0.14}}
\multiput(16.17,33.36)(0.1,0.07){4}{\line(1,0){0.1}}
\multiput(16.57,33.65)(0.1,0.08){4}{\line(1,0){0.1}}
\multiput(16.95,33.96)(0.09,0.08){4}{\line(1,0){0.09}}
\multiput(17.32,34.29)(0.09,0.09){4}{\line(0,1){0.09}}
\multiput(17.67,34.65)(0.08,0.09){4}{\line(0,1){0.09}}
\multiput(17.99,35.02)(0.08,0.1){4}{\line(0,1){0.1}}
\multiput(18.3,35.41)(0.07,0.1){4}{\line(0,1){0.1}}
\multiput(18.58,35.82)(0.09,0.14){3}{\line(0,1){0.14}}
\multiput(18.84,36.24)(0.08,0.15){3}{\line(0,1){0.15}}
\multiput(19.08,36.68)(0.07,0.15){3}{\line(0,1){0.15}}
\multiput(19.29,37.13)(0.09,0.23){2}{\line(0,1){0.23}}
\multiput(19.47,37.59)(0.08,0.23){2}{\line(0,1){0.23}}
\multiput(19.63,38.06)(0.07,0.24){2}{\line(0,1){0.24}}
\multiput(19.76,38.53)(0.11,0.48){1}{\line(0,1){0.48}}
\multiput(19.87,39.02)(0.08,0.49){1}{\line(0,1){0.49}}
\multiput(19.95,39.51)(0.05,0.49){1}{\line(0,1){0.49}}

\linethickness{0.3mm}
\multiput(20,0)(0.04,0.5){1}{\line(0,1){0.5}}
\multiput(20.04,0.5)(0.02,0.5){1}{\line(0,1){0.5}}
\multiput(20.07,1.01)(0,0.51){1}{\line(0,1){0.51}}
\multiput(20.06,2.02)(0.01,-0.51){1}{\line(0,-1){0.51}}
\multiput(20.02,2.52)(0.03,-0.5){1}{\line(0,-1){0.5}}
\multiput(19.97,3.03)(0.05,-0.5){1}{\line(0,-1){0.5}}
\multiput(19.9,3.53)(0.07,-0.5){1}{\line(0,-1){0.5}}
\multiput(19.81,4.02)(0.09,-0.5){1}{\line(0,-1){0.5}}
\multiput(19.7,4.52)(0.11,-0.49){1}{\line(0,-1){0.49}}
\multiput(19.58,5.01)(0.06,-0.24){2}{\line(0,-1){0.24}}
\multiput(19.44,5.49)(0.07,-0.24){2}{\line(0,-1){0.24}}
\multiput(19.27,5.97)(0.08,-0.24){2}{\line(0,-1){0.24}}
\multiput(19.1,6.44)(0.09,-0.24){2}{\line(0,-1){0.24}}
\multiput(18.9,6.91)(0.1,-0.23){2}{\line(0,-1){0.23}}
\multiput(18.69,7.37)(0.07,-0.15){3}{\line(0,-1){0.15}}
\multiput(18.46,7.82)(0.08,-0.15){3}{\line(0,-1){0.15}}
\multiput(18.21,8.26)(0.08,-0.15){3}{\line(0,-1){0.15}}
\multiput(17.95,8.69)(0.09,-0.14){3}{\line(0,-1){0.14}}
\multiput(17.67,9.11)(0.09,-0.14){3}{\line(0,-1){0.14}}
\multiput(17.37,9.52)(0.07,-0.1){4}{\line(0,-1){0.1}}
\multiput(17.06,9.92)(0.08,-0.1){4}{\line(0,-1){0.1}}
\multiput(16.74,10.31)(0.08,-0.1){4}{\line(0,-1){0.1}}
\multiput(16.4,10.68)(0.08,-0.09){4}{\line(0,-1){0.09}}
\multiput(16.05,11.05)(0.09,-0.09){4}{\line(0,-1){0.09}}
\multiput(15.69,11.4)(0.09,-0.09){4}{\line(1,0){0.09}}
\multiput(15.31,11.73)(0.09,-0.08){4}{\line(1,0){0.09}}
\multiput(14.92,12.05)(0.1,-0.08){4}{\line(1,0){0.1}}
\multiput(14.52,12.36)(0.1,-0.08){4}{\line(1,0){0.1}}
\multiput(14.1,12.65)(0.1,-0.07){4}{\line(1,0){0.1}}
\multiput(13.68,12.93)(0.14,-0.09){3}{\line(1,0){0.14}}
\multiput(13.25,13.19)(0.14,-0.09){3}{\line(1,0){0.14}}
\multiput(12.81,13.43)(0.15,-0.08){3}{\line(1,0){0.15}}
\multiput(12.36,13.66)(0.15,-0.08){3}{\line(1,0){0.15}}
\multiput(11.9,13.87)(0.15,-0.07){3}{\line(1,0){0.15}}
\multiput(11.43,14.07)(0.23,-0.1){2}{\line(1,0){0.23}}
\multiput(10.96,14.24)(0.24,-0.09){2}{\line(1,0){0.24}}
\multiput(10.48,14.4)(0.24,-0.08){2}{\line(1,0){0.24}}
\multiput(9.99,14.54)(0.24,-0.07){2}{\line(1,0){0.24}}
\multiput(9.5,14.66)(0.25,-0.06){2}{\line(1,0){0.25}}
\multiput(9.01,14.77)(0.49,-0.1){1}{\line(1,0){0.49}}
\multiput(8.51,14.85)(0.5,-0.09){1}{\line(1,0){0.5}}
\multiput(8.01,14.92)(0.5,-0.07){1}{\line(1,0){0.5}}
\multiput(7.5,14.97)(0.5,-0.05){1}{\line(1,0){0.5}}
\multiput(7,15)(0.5,-0.03){1}{\line(1,0){0.5}}

\linethickness{1mm}
\put(10,0){\line(0,1){12}}
\put(15,26){\makebox(0,0)[cl]{}}

\end{picture}
$\qquad$ $\qquad$ $\qquad$ $\qquad$
\unitlength 0.4mm
\begin{picture}(50.62,82)(0,0)
\put(28.75,39.38){\makebox(0,0)[cc]{$=$}}

\linethickness{1mm}
\put(38.75,0.62){\line(0,1){78.75}}
\put(43.88,46){\makebox(0,0)[cl]{}}

\linethickness{0.3mm}
\put(50.62,0.62){\line(0,1){79.38}}
\linethickness{1mm}
\put(10,39.5){\line(0,1){23}}
\linethickness{1mm}
\put(10,67.5){\line(0,1){12}}
\linethickness{0.3mm}
\multiput(8,64.5)(0.49,0.08){1}{\line(1,0){0.49}}
\multiput(8.49,64.58)(0.49,0.1){1}{\line(1,0){0.49}}
\multiput(8.97,64.68)(0.48,0.11){1}{\line(1,0){0.48}}
\multiput(9.46,64.79)(0.48,0.13){1}{\line(1,0){0.48}}
\multiput(9.94,64.93)(0.47,0.15){1}{\line(1,0){0.47}}
\multiput(10.41,65.07)(0.47,0.16){1}{\line(1,0){0.47}}
\multiput(10.88,65.24)(0.46,0.18){1}{\line(1,0){0.46}}
\multiput(11.34,65.42)(0.23,0.1){2}{\line(1,0){0.23}}
\multiput(11.79,65.61)(0.22,0.11){2}{\line(1,0){0.22}}
\multiput(12.24,65.82)(0.22,0.11){2}{\line(1,0){0.22}}
\multiput(12.68,66.05)(0.22,0.12){2}{\line(1,0){0.22}}
\multiput(13.12,66.29)(0.21,0.13){2}{\line(1,0){0.21}}
\multiput(13.54,66.55)(0.21,0.14){2}{\line(1,0){0.21}}
\multiput(13.95,66.82)(0.2,0.14){2}{\line(1,0){0.2}}
\multiput(14.36,67.11)(0.2,0.15){2}{\line(1,0){0.2}}
\multiput(14.76,67.4)(0.13,0.1){3}{\line(1,0){0.13}}
\multiput(15.14,67.72)(0.12,0.11){3}{\line(1,0){0.12}}
\multiput(15.51,68.04)(0.12,0.11){3}{\line(1,0){0.12}}
\multiput(15.88,68.38)(0.12,0.12){3}{\line(1,0){0.12}}
\multiput(16.23,68.73)(0.11,0.12){3}{\line(0,1){0.12}}
\multiput(16.57,69.09)(0.11,0.12){3}{\line(0,1){0.12}}
\multiput(16.89,69.47)(0.1,0.13){3}{\line(0,1){0.13}}
\multiput(17.2,69.85)(0.15,0.2){2}{\line(0,1){0.2}}
\multiput(17.5,70.25)(0.14,0.2){2}{\line(0,1){0.2}}
\multiput(17.79,70.65)(0.14,0.21){2}{\line(0,1){0.21}}
\multiput(18.06,71.07)(0.13,0.21){2}{\line(0,1){0.21}}
\multiput(18.32,71.49)(0.12,0.22){2}{\line(0,1){0.22}}
\multiput(18.56,71.92)(0.11,0.22){2}{\line(0,1){0.22}}
\multiput(18.79,72.36)(0.11,0.22){2}{\line(0,1){0.22}}
\multiput(19,72.81)(0.1,0.23){2}{\line(0,1){0.23}}
\multiput(19.19,73.27)(0.09,0.23){2}{\line(0,1){0.23}}
\multiput(19.38,73.73)(0.16,0.47){1}{\line(0,1){0.47}}
\multiput(19.54,74.2)(0.15,0.47){1}{\line(0,1){0.47}}
\multiput(19.69,74.67)(0.13,0.48){1}{\line(0,1){0.48}}
\multiput(19.82,75.15)(0.12,0.48){1}{\line(0,1){0.48}}
\multiput(19.94,75.63)(0.1,0.49){1}{\line(0,1){0.49}}
\multiput(20.03,76.12)(0.08,0.49){1}{\line(0,1){0.49}}
\multiput(20.12,76.6)(0.06,0.49){1}{\line(0,1){0.49}}
\multiput(20.18,77.1)(0.05,0.49){1}{\line(0,1){0.49}}
\multiput(20.23,77.59)(0.03,0.49){1}{\line(0,1){0.49}}
\multiput(20.26,78.08)(0.01,0.5){1}{\line(0,1){0.5}}
\multiput(20.27,79.07)(0,-0.5){1}{\line(0,-1){0.5}}
\multiput(20.25,79.57)(0.02,-0.5){1}{\line(0,-1){0.5}}

\linethickness{0.3mm}
\multiput(7.54,64.32)(0.23,0.09){2}{\line(1,0){0.23}}
\multiput(7.09,64.11)(0.23,0.1){2}{\line(1,0){0.23}}
\multiput(6.65,63.88)(0.22,0.12){2}{\line(1,0){0.22}}
\multiput(6.23,63.62)(0.21,0.13){2}{\line(1,0){0.21}}
\multiput(5.82,63.33)(0.2,0.14){2}{\line(1,0){0.2}}
\multiput(5.43,63.03)(0.13,0.1){3}{\line(1,0){0.13}}
\multiput(5.06,62.7)(0.12,0.11){3}{\line(1,0){0.12}}
\multiput(4.71,62.35)(0.12,0.12){3}{\line(1,0){0.12}}
\multiput(4.38,61.98)(0.11,0.12){3}{\line(0,1){0.12}}
\multiput(4.07,61.59)(0.1,0.13){3}{\line(0,1){0.13}}
\multiput(3.78,61.19)(0.14,0.2){2}{\line(0,1){0.2}}
\multiput(3.52,60.76)(0.13,0.21){2}{\line(0,1){0.21}}
\multiput(3.28,60.33)(0.12,0.22){2}{\line(0,1){0.22}}
\multiput(3.07,59.88)(0.11,0.22){2}{\line(0,1){0.22}}
\multiput(2.89,59.42)(0.09,0.23){2}{\line(0,1){0.23}}
\multiput(2.73,58.95)(0.16,0.47){1}{\line(0,1){0.47}}
\multiput(2.6,58.47)(0.13,0.48){1}{\line(0,1){0.48}}
\multiput(2.5,57.98)(0.1,0.49){1}{\line(0,1){0.49}}
\multiput(2.43,57.49)(0.07,0.49){1}{\line(0,1){0.49}}
\multiput(2.39,57)(0.04,0.49){1}{\line(0,1){0.49}}
\multiput(2.37,56.5)(0.01,0.5){1}{\line(0,1){0.5}}
\multiput(2.37,56.5)(0.01,-0.5){1}{\line(0,-1){0.5}}
\multiput(2.39,56)(0.04,-0.49){1}{\line(0,-1){0.49}}
\multiput(2.43,55.51)(0.07,-0.49){1}{\line(0,-1){0.49}}
\multiput(2.5,55.02)(0.1,-0.49){1}{\line(0,-1){0.49}}
\multiput(2.6,54.53)(0.13,-0.48){1}{\line(0,-1){0.48}}
\multiput(2.73,54.05)(0.16,-0.47){1}{\line(0,-1){0.47}}
\multiput(2.89,53.58)(0.09,-0.23){2}{\line(0,-1){0.23}}
\multiput(3.07,53.12)(0.11,-0.22){2}{\line(0,-1){0.22}}
\multiput(3.28,52.67)(0.12,-0.22){2}{\line(0,-1){0.22}}
\multiput(3.52,52.24)(0.13,-0.21){2}{\line(0,-1){0.21}}
\multiput(3.78,51.81)(0.14,-0.2){2}{\line(0,-1){0.2}}
\multiput(4.07,51.41)(0.1,-0.13){3}{\line(0,-1){0.13}}
\multiput(4.38,51.02)(0.11,-0.12){3}{\line(0,-1){0.12}}
\multiput(4.71,50.65)(0.12,-0.12){3}{\line(1,0){0.12}}
\multiput(5.06,50.3)(0.12,-0.11){3}{\line(1,0){0.12}}
\multiput(5.43,49.97)(0.13,-0.1){3}{\line(1,0){0.13}}
\multiput(5.82,49.67)(0.2,-0.14){2}{\line(1,0){0.2}}
\multiput(6.23,49.38)(0.21,-0.13){2}{\line(1,0){0.21}}
\multiput(6.65,49.12)(0.22,-0.12){2}{\line(1,0){0.22}}
\multiput(7.09,48.89)(0.23,-0.1){2}{\line(1,0){0.23}}
\multiput(7.54,48.68)(0.23,-0.09){2}{\line(1,0){0.23}}

\linethickness{1mm}
\put(10,1){\line(0,1){23}}
\linethickness{1mm}
\put(10,29){\line(0,1){12}}
\linethickness{0.3mm}
\multiput(8,26)(0.5,0.02){1}{\line(1,0){0.5}}
\multiput(8.5,26.02)(0.5,0.04){1}{\line(1,0){0.5}}
\multiput(9,26.06)(0.5,0.06){1}{\line(1,0){0.5}}
\multiput(9.49,26.12)(0.49,0.08){1}{\line(1,0){0.49}}
\multiput(9.98,26.2)(0.49,0.1){1}{\line(1,0){0.49}}
\multiput(10.47,26.3)(0.48,0.13){1}{\line(1,0){0.48}}
\multiput(10.95,26.43)(0.48,0.15){1}{\line(1,0){0.48}}
\multiput(11.43,26.58)(0.47,0.17){1}{\line(1,0){0.47}}
\multiput(11.9,26.74)(0.23,0.09){2}{\line(1,0){0.23}}
\multiput(12.36,26.93)(0.23,0.1){2}{\line(1,0){0.23}}
\multiput(12.82,27.14)(0.22,0.11){2}{\line(1,0){0.22}}
\multiput(13.26,27.37)(0.22,0.12){2}{\line(1,0){0.22}}
\multiput(13.69,27.62)(0.21,0.13){2}{\line(1,0){0.21}}
\multiput(14.11,27.89)(0.2,0.14){2}{\line(1,0){0.2}}
\multiput(14.52,28.17)(0.13,0.1){3}{\line(1,0){0.13}}
\multiput(14.92,28.48)(0.13,0.11){3}{\line(1,0){0.13}}
\multiput(15.3,28.8)(0.12,0.11){3}{\line(1,0){0.12}}
\multiput(15.67,29.14)(0.12,0.12){3}{\line(0,1){0.12}}
\multiput(16.02,29.49)(0.11,0.12){3}{\line(0,1){0.12}}
\multiput(16.36,29.86)(0.11,0.13){3}{\line(0,1){0.13}}
\multiput(16.68,30.24)(0.1,0.13){3}{\line(0,1){0.13}}
\multiput(16.98,30.64)(0.14,0.21){2}{\line(0,1){0.21}}
\multiput(17.26,31.05)(0.13,0.21){2}{\line(0,1){0.21}}
\multiput(17.53,31.47)(0.12,0.22){2}{\line(0,1){0.22}}
\multiput(17.77,31.91)(0.11,0.22){2}{\line(0,1){0.22}}
\multiput(18,32.35)(0.1,0.23){2}{\line(0,1){0.23}}
\multiput(18.21,32.8)(0.09,0.23){2}{\line(0,1){0.23}}
\multiput(18.4,33.27)(0.17,0.47){1}{\line(0,1){0.47}}
\multiput(18.56,33.74)(0.15,0.48){1}{\line(0,1){0.48}}
\multiput(18.71,34.22)(0.12,0.48){1}{\line(0,1){0.48}}
\multiput(18.83,34.7)(0.1,0.49){1}{\line(0,1){0.49}}
\multiput(18.93,35.19)(0.08,0.49){1}{\line(0,1){0.49}}
\multiput(19.01,35.68)(0.06,0.5){1}{\line(0,1){0.5}}
\multiput(19.07,36.18)(0.04,0.5){1}{\line(0,1){0.5}}
\multiput(19.11,36.67)(0.01,0.5){1}{\line(0,1){0.5}}
\multiput(19.11,37.67)(0.01,-0.5){1}{\line(0,-1){0.5}}
\multiput(19.08,38.17)(0.03,-0.5){1}{\line(0,-1){0.5}}
\multiput(19.03,38.67)(0.05,-0.5){1}{\line(0,-1){0.5}}
\multiput(18.96,39.16)(0.07,-0.49){1}{\line(0,-1){0.49}}
\multiput(18.86,39.65)(0.1,-0.49){1}{\line(0,-1){0.49}}
\multiput(18.75,40.13)(0.12,-0.49){1}{\line(0,-1){0.49}}
\multiput(18.61,40.61)(0.14,-0.48){1}{\line(0,-1){0.48}}
\multiput(18.45,41.09)(0.16,-0.47){1}{\line(0,-1){0.47}}
\multiput(18.27,41.55)(0.09,-0.23){2}{\line(0,-1){0.23}}
\multiput(18.07,42.01)(0.1,-0.23){2}{\line(0,-1){0.23}}
\multiput(17.84,42.46)(0.11,-0.22){2}{\line(0,-1){0.22}}
\multiput(17.6,42.89)(0.12,-0.22){2}{\line(0,-1){0.22}}
\multiput(17.34,43.32)(0.13,-0.21){2}{\line(0,-1){0.21}}
\multiput(17.06,43.73)(0.14,-0.21){2}{\line(0,-1){0.21}}
\multiput(16.77,44.13)(0.15,-0.2){2}{\line(0,-1){0.2}}
\multiput(16.45,44.52)(0.1,-0.13){3}{\line(0,-1){0.13}}
\multiput(16.12,44.89)(0.11,-0.12){3}{\line(0,-1){0.12}}
\multiput(15.78,45.25)(0.12,-0.12){3}{\line(0,-1){0.12}}
\multiput(15.41,45.6)(0.12,-0.11){3}{\line(1,0){0.12}}
\multiput(15.03,45.92)(0.13,-0.11){3}{\line(1,0){0.13}}
\multiput(14.64,46.23)(0.13,-0.1){3}{\line(1,0){0.13}}
\multiput(14.24,46.52)(0.2,-0.15){2}{\line(1,0){0.2}}
\multiput(13.82,46.8)(0.21,-0.14){2}{\line(1,0){0.21}}
\multiput(13.39,47.05)(0.21,-0.13){2}{\line(1,0){0.21}}
\multiput(12.95,47.28)(0.22,-0.12){2}{\line(1,0){0.22}}
\multiput(12.5,47.5)(0.23,-0.11){2}{\line(1,0){0.23}}

\linethickness{0.3mm}
\multiput(7.54,25.82)(0.23,0.09){2}{\line(1,0){0.23}}
\multiput(7.09,25.61)(0.23,0.1){2}{\line(1,0){0.23}}
\multiput(6.65,25.38)(0.22,0.12){2}{\line(1,0){0.22}}
\multiput(6.23,25.12)(0.21,0.13){2}{\line(1,0){0.21}}
\multiput(5.82,24.83)(0.2,0.14){2}{\line(1,0){0.2}}
\multiput(5.43,24.53)(0.13,0.1){3}{\line(1,0){0.13}}
\multiput(5.06,24.2)(0.12,0.11){3}{\line(1,0){0.12}}
\multiput(4.71,23.85)(0.12,0.12){3}{\line(1,0){0.12}}
\multiput(4.38,23.48)(0.11,0.12){3}{\line(0,1){0.12}}
\multiput(4.07,23.09)(0.1,0.13){3}{\line(0,1){0.13}}
\multiput(3.78,22.69)(0.14,0.2){2}{\line(0,1){0.2}}
\multiput(3.52,22.26)(0.13,0.21){2}{\line(0,1){0.21}}
\multiput(3.28,21.83)(0.12,0.22){2}{\line(0,1){0.22}}
\multiput(3.07,21.38)(0.11,0.22){2}{\line(0,1){0.22}}
\multiput(2.89,20.92)(0.09,0.23){2}{\line(0,1){0.23}}
\multiput(2.73,20.45)(0.16,0.47){1}{\line(0,1){0.47}}
\multiput(2.6,19.97)(0.13,0.48){1}{\line(0,1){0.48}}
\multiput(2.5,19.48)(0.1,0.49){1}{\line(0,1){0.49}}
\multiput(2.43,18.99)(0.07,0.49){1}{\line(0,1){0.49}}
\multiput(2.39,18.5)(0.04,0.49){1}{\line(0,1){0.49}}
\multiput(2.37,18)(0.01,0.5){1}{\line(0,1){0.5}}
\multiput(2.37,18)(0.01,-0.5){1}{\line(0,-1){0.5}}
\multiput(2.39,17.5)(0.04,-0.49){1}{\line(0,-1){0.49}}
\multiput(2.43,17.01)(0.07,-0.49){1}{\line(0,-1){0.49}}
\multiput(2.5,16.52)(0.1,-0.49){1}{\line(0,-1){0.49}}
\multiput(2.6,16.03)(0.13,-0.48){1}{\line(0,-1){0.48}}
\multiput(2.73,15.55)(0.16,-0.47){1}{\line(0,-1){0.47}}
\multiput(2.89,15.08)(0.09,-0.23){2}{\line(0,-1){0.23}}
\multiput(3.07,14.62)(0.11,-0.22){2}{\line(0,-1){0.22}}
\multiput(3.28,14.17)(0.12,-0.22){2}{\line(0,-1){0.22}}
\multiput(3.52,13.74)(0.13,-0.21){2}{\line(0,-1){0.21}}
\multiput(3.78,13.31)(0.14,-0.2){2}{\line(0,-1){0.2}}
\multiput(4.07,12.91)(0.1,-0.13){3}{\line(0,-1){0.13}}
\multiput(4.38,12.52)(0.11,-0.12){3}{\line(0,-1){0.12}}
\multiput(4.71,12.15)(0.12,-0.12){3}{\line(1,0){0.12}}
\multiput(5.06,11.8)(0.12,-0.11){3}{\line(1,0){0.12}}
\multiput(5.43,11.47)(0.13,-0.1){3}{\line(1,0){0.13}}
\multiput(5.82,11.17)(0.2,-0.14){2}{\line(1,0){0.2}}
\multiput(6.23,10.88)(0.21,-0.13){2}{\line(1,0){0.21}}
\multiput(6.65,10.62)(0.22,-0.12){2}{\line(1,0){0.22}}
\multiput(7.09,10.39)(0.23,-0.1){2}{\line(1,0){0.23}}
\multiput(7.54,10.18)(0.23,-0.09){2}{\line(1,0){0.23}}

\linethickness{0.3mm}
\multiput(19.91,1.67)(0.05,-0.49){1}{\line(0,-1){0.49}}
\multiput(19.82,2.16)(0.08,-0.49){1}{\line(0,-1){0.49}}
\multiput(19.71,2.64)(0.11,-0.48){1}{\line(0,-1){0.48}}
\multiput(19.58,3.11)(0.14,-0.47){1}{\line(0,-1){0.47}}
\multiput(19.41,3.58)(0.16,-0.47){1}{\line(0,-1){0.47}}
\multiput(19.22,4.04)(0.09,-0.23){2}{\line(0,-1){0.23}}
\multiput(19.01,4.48)(0.11,-0.22){2}{\line(0,-1){0.22}}
\multiput(18.77,4.91)(0.12,-0.22){2}{\line(0,-1){0.22}}
\multiput(18.5,5.33)(0.13,-0.21){2}{\line(0,-1){0.21}}
\multiput(18.22,5.73)(0.14,-0.2){2}{\line(0,-1){0.2}}
\multiput(17.9,6.11)(0.1,-0.13){3}{\line(0,-1){0.13}}
\multiput(17.57,6.48)(0.11,-0.12){3}{\line(0,-1){0.12}}
\multiput(17.22,6.83)(0.12,-0.12){3}{\line(1,0){0.12}}
\multiput(16.85,7.15)(0.12,-0.11){3}{\line(1,0){0.12}}
\multiput(16.46,7.45)(0.13,-0.1){3}{\line(1,0){0.13}}
\multiput(16.05,7.74)(0.2,-0.14){2}{\line(1,0){0.2}}
\multiput(15.63,7.99)(0.21,-0.13){2}{\line(1,0){0.21}}
\multiput(15.2,8.23)(0.22,-0.12){2}{\line(1,0){0.22}}
\multiput(14.75,8.43)(0.22,-0.1){2}{\line(1,0){0.22}}
\multiput(14.29,8.62)(0.23,-0.09){2}{\line(1,0){0.23}}
\multiput(13.82,8.77)(0.47,-0.16){1}{\line(1,0){0.47}}
\multiput(13.34,8.9)(0.48,-0.13){1}{\line(1,0){0.48}}
\multiput(12.86,9)(0.48,-0.1){1}{\line(1,0){0.48}}
\end{picture}
\caption{A pole twist and the relation of a pole of order two} \label{ch5twist} \label{ch5doubletwist2}
\end{figure}
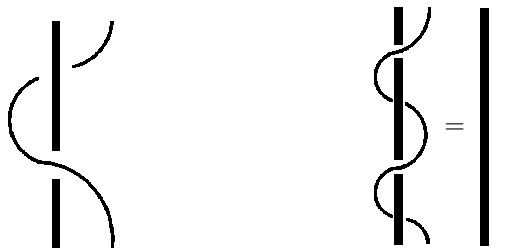

\begin{Def}
\rm
An {\em $(n,n)$-tangle with a pole} is an $(n,n)$-tangle which includes the
distinguished straight line segment in $\R^2 \times [0,1]$
connecting $(0,0,1)$ with $(0,0,0)$, called the {\em pole}.
\end{Def}

While we regard normal strands of a diagram as pieces of rope or rubber bands,
we can treat the pole as an iron pipe or bar. It is a fixed vertical strand
which cannot be deformed (or bent).  Because of this, Reidemeister I will never
occur for the pole.  Furthermore, we do not allow Reidemeister III where the
pole is one of the three strands.  Only Reidemeister II is allowed.  Here two
consecutive under or over crossings of one strand with the pole can be removed
leaving the pole intact.  It is here that we differ from \cite{Goo} where
Reidemeister III is allowed.

\np The (one time) encircling of the pole by a strand of the tangle is called
a {\em twist around the pole} or simply {\em pole twist}. See
Figure~\ref{ch5twist}, where the pole is depicted as a bold vertical strand.
For our purposes in $\D_n$ we will use a pole of order two:

\begin{Def}
\rm The pole is said to have order two if two consecutive twists around the
pole can be removed in that the resulting strand starts and finishes in the
same place but no longer goes around the pole.  Here consecutive means the
second twist follows immediately after the first twist with no other strands
between them.  See Figure~\ref{ch5doubletwist2}.
\end{Def}

\np We now define tangles of type $\A$, $\D^{(1)}$, $\D^{(2)}$, and $\D$.

\begin{Def}\label{df:tangletype}
\rm An $(n,n)$-tangle of type $\A$ is an $(n,n)$-tangle with no strands going
around the pole.  An $(n,n)$-tangle of type $\D^{(1)}$ is an $(n,n)$-tangle
with a pole of order two.  If there are an even number of pole twists, it is
called of type $\D^{(2)}$.  A tangle of type $\D^{(2)}$ is said to be of type
$\D$ if it has a horizontal strand whenever it has a closed strand twisting
around the pole.
\end{Def}

\begin{figure}[hbtp]
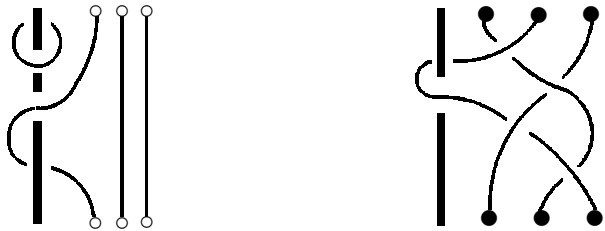

\unitlength 0.75mm
% [inline block 1: 2 envs, 39190 chars -> data_tex | \begin{picture}(26.36,45.28)(0,0) \linethickness{0.3mm}...]

\caption{Two $(3,3)$-tangles, one of type $\D^{(2)}$ that is not of type
$\D$, and one of type $\D^{(1)}$ that is not of type $\D^{(2)}$}
\label{pic:rho}\label{pic:tangletypeD}
\end{figure}

\np As only isotopy of the plane is allowed that does not affect the pole, we
forbid tangles to have crossings of strands at the left side
of the pole. Moreover, all pole twists in a tangle diagram are isolated from
each other. So when traversing the pole from top to bottom, the twists of a
tangle are met one by one.  An example of a tangle of type $\D^{(1)}$ is given
in Figure \ref{pic:tangletypeD}.

\begin{figure}[htbp]
%{ch5doubletwist}
\unitlength 0.6mm
\begin{picture}(47.42,49.27)(0,0)
\linethickness{1mm}
\put(8.12,0.62){\line(0,1){20}}
\linethickness{0.3mm}
\multiput(7.5,23.12)(0.49,0.11){1}{\line(1,0){0.49}}
\multiput(7.99,23.23)(0.24,0.06){2}{\line(1,0){0.24}}
\multiput(8.48,23.36)(0.24,0.07){2}{\line(1,0){0.24}}
\multiput(8.97,23.5)(0.24,0.08){2}{\line(1,0){0.24}}
\multiput(9.45,23.67)(0.24,0.09){2}{\line(1,0){0.24}}
\multiput(9.92,23.85)(0.23,0.1){2}{\line(1,0){0.23}}
\multiput(10.38,24.05)(0.15,0.07){3}{\line(1,0){0.15}}
\multiput(10.84,24.27)(0.15,0.08){3}{\line(1,0){0.15}}
\multiput(11.29,24.5)(0.15,0.08){3}{\line(1,0){0.15}}
\multiput(11.73,24.76)(0.14,0.09){3}{\line(1,0){0.14}}
\multiput(12.15,25.02)(0.1,0.07){4}{\line(1,0){0.1}}
\multiput(12.57,25.31)(0.1,0.08){4}{\line(1,0){0.1}}
\multiput(12.98,25.61)(0.1,0.08){4}{\line(1,0){0.1}}
\multiput(13.37,25.93)(0.1,0.08){4}{\line(1,0){0.1}}
\multiput(13.76,26.26)(0.09,0.09){4}{\line(1,0){0.09}}
\multiput(14.13,26.6)(0.09,0.09){4}{\line(0,1){0.09}}
\multiput(14.48,26.96)(0.09,0.09){4}{\line(0,1){0.09}}
\multiput(14.82,27.33)(0.08,0.1){4}{\line(0,1){0.1}}
\multiput(15.15,27.72)(0.08,0.1){4}{\line(0,1){0.1}}
\multiput(15.46,28.12)(0.07,0.1){4}{\line(0,1){0.1}}
\multiput(15.76,28.53)(0.07,0.11){4}{\line(0,1){0.11}}
\multiput(16.04,28.95)(0.09,0.14){3}{\line(0,1){0.14}}
\multiput(16.31,29.38)(0.08,0.15){3}{\line(0,1){0.15}}
\multiput(16.56,29.82)(0.08,0.15){3}{\line(0,1){0.15}}
\multiput(16.79,30.27)(0.07,0.15){3}{\line(0,1){0.15}}
\multiput(17,30.73)(0.1,0.23){2}{\line(0,1){0.23}}
\multiput(17.2,31.19)(0.09,0.24){2}{\line(0,1){0.24}}
\multiput(17.37,31.66)(0.08,0.24){2}{\line(0,1){0.24}}
\multiput(17.53,32.14)(0.07,0.24){2}{\line(0,1){0.24}}
\multiput(17.68,32.63)(0.06,0.25){2}{\line(0,1){0.25}}
\multiput(17.8,33.12)(0.1,0.5){1}{\line(0,1){0.5}}
\multiput(17.9,33.62)(0.08,0.5){1}{\line(0,1){0.5}}
\multiput(17.99,34.11)(0.07,0.5){1}{\line(0,1){0.5}}
\multiput(18.05,34.62)(0.05,0.5){1}{\line(0,1){0.5}}
\multiput(18.1,35.12)(0.03,0.51){1}{\line(0,1){0.51}}

\linethickness{0.3mm}
\multiput(7.02,23)(0.24,0.06){2}{\line(1,0){0.24}}
\multiput(6.56,22.84)(0.23,0.08){2}{\line(1,0){0.23}}
\multiput(6.11,22.64)(0.23,0.1){2}{\line(1,0){0.23}}
\multiput(5.67,22.42)(0.15,0.08){3}{\line(1,0){0.15}}
\multiput(5.25,22.16)(0.14,0.09){3}{\line(1,0){0.14}}
\multiput(4.85,21.87)(0.1,0.07){4}{\line(1,0){0.1}}
\multiput(4.47,21.56)(0.09,0.08){4}{\line(1,0){0.09}}
\multiput(4.12,21.21)(0.09,0.09){4}{\line(1,0){0.09}}
\multiput(3.79,20.85)(0.08,0.09){4}{\line(0,1){0.09}}
\multiput(3.49,20.46)(0.08,0.1){4}{\line(0,1){0.1}}
\multiput(3.22,20.04)(0.09,0.14){3}{\line(0,1){0.14}}
\multiput(2.97,19.62)(0.08,0.14){3}{\line(0,1){0.14}}
\multiput(2.77,19.17)(0.07,0.15){3}{\line(0,1){0.15}}
\multiput(2.59,18.71)(0.09,0.23){2}{\line(0,1){0.23}}
\multiput(2.45,18.24)(0.07,0.24){2}{\line(0,1){0.24}}
\multiput(2.34,17.76)(0.11,0.48){1}{\line(0,1){0.48}}
\multiput(2.26,17.27)(0.07,0.49){1}{\line(0,1){0.49}}
\multiput(2.23,16.78)(0.04,0.49){1}{\line(0,1){0.49}}
\multiput(2.23,16.29)(0,0.49){1}{\line(0,1){0.49}}
\multiput(2.23,16.29)(0.03,-0.49){1}{\line(0,-1){0.49}}
\multiput(2.26,15.8)(0.07,-0.49){1}{\line(0,-1){0.49}}
\multiput(2.33,15.31)(0.1,-0.48){1}{\line(0,-1){0.48}}
\multiput(2.43,14.83)(0.07,-0.24){2}{\line(0,-1){0.24}}
\multiput(2.57,14.35)(0.09,-0.23){2}{\line(0,-1){0.23}}
\multiput(2.75,13.89)(0.07,-0.15){3}{\line(0,-1){0.15}}
\multiput(2.95,13.45)(0.08,-0.14){3}{\line(0,-1){0.14}}
\multiput(3.19,13.01)(0.09,-0.14){3}{\line(0,-1){0.14}}
\multiput(3.46,12.6)(0.07,-0.1){4}{\line(0,-1){0.1}}
\multiput(3.76,12.21)(0.08,-0.09){4}{\line(0,-1){0.09}}
\multiput(4.08,11.84)(0.09,-0.09){4}{\line(1,0){0.09}}
\multiput(4.43,11.49)(0.09,-0.08){4}{\line(1,0){0.09}}
\multiput(4.81,11.18)(0.1,-0.07){4}{\line(1,0){0.1}}
\multiput(5.21,10.89)(0.14,-0.09){3}{\line(1,0){0.14}}

\linethickness{0.3mm}
\multiput(18.08,1.13)(0.05,-0.5){1}{\line(0,-1){0.5}}
\multiput(18,1.63)(0.08,-0.5){1}{\line(0,-1){0.5}}
\multiput(17.9,2.12)(0.1,-0.49){1}{\line(0,-1){0.49}}
\multiput(17.77,2.61)(0.07,-0.24){2}{\line(0,-1){0.24}}
\multiput(17.61,3.09)(0.08,-0.24){2}{\line(0,-1){0.24}}
\multiput(17.42,3.56)(0.09,-0.23){2}{\line(0,-1){0.23}}
\multiput(17.2,4.02)(0.07,-0.15){3}{\line(0,-1){0.15}}
\multiput(16.96,4.46)(0.08,-0.15){3}{\line(0,-1){0.15}}
\multiput(16.7,4.89)(0.09,-0.14){3}{\line(0,-1){0.14}}
\multiput(16.41,5.31)(0.07,-0.1){4}{\line(0,-1){0.1}}
\multiput(16.1,5.71)(0.08,-0.1){4}{\line(0,-1){0.1}}
\multiput(15.76,6.08)(0.08,-0.09){4}{\line(0,-1){0.09}}
\multiput(15.4,6.44)(0.09,-0.09){4}{\line(0,-1){0.09}}
\multiput(15.03,6.78)(0.09,-0.08){4}{\line(1,0){0.09}}
\multiput(14.63,7.09)(0.1,-0.08){4}{\line(1,0){0.1}}
\multiput(14.22,7.39)(0.1,-0.07){4}{\line(1,0){0.1}}
\multiput(13.79,7.65)(0.14,-0.09){3}{\line(1,0){0.14}}
\multiput(13.34,7.89)(0.15,-0.08){3}{\line(1,0){0.15}}
\multiput(12.89,8.11)(0.15,-0.07){3}{\line(1,0){0.15}}
\multiput(12.42,8.3)(0.23,-0.09){2}{\line(1,0){0.23}}
\multiput(11.94,8.46)(0.24,-0.08){2}{\line(1,0){0.24}}
\multiput(11.45,8.6)(0.24,-0.07){2}{\line(1,0){0.24}}
\multiput(10.96,8.7)(0.49,-0.11){1}{\line(1,0){0.49}}

\linethickness{1mm}
\put(8.12,26.25){\line(0,1){9.38}}
\put(23.75,16.88){\makebox(0,0)[cc]{$=$}}

\linethickness{1mm}
\put(36.88,0.62){\line(0,1){6.25}}
\linethickness{0.3mm}
\multiput(39.38,24.38)(0.23,0.1){2}{\line(1,0){0.23}}
\multiput(39.83,24.57)(0.15,0.07){3}{\line(1,0){0.15}}
\multiput(40.27,24.79)(0.15,0.08){3}{\line(1,0){0.15}}
\multiput(40.71,25.02)(0.14,0.08){3}{\line(1,0){0.14}}
\multiput(41.14,25.27)(0.14,0.09){3}{\line(1,0){0.14}}
\multiput(41.55,25.54)(0.1,0.07){4}{\line(1,0){0.1}}
\multiput(41.96,25.83)(0.1,0.08){4}{\line(1,0){0.1}}
\multiput(42.35,26.13)(0.09,0.08){4}{\line(1,0){0.09}}
\multiput(42.73,26.44)(0.09,0.08){4}{\line(1,0){0.09}}
\multiput(43.1,26.78)(0.09,0.09){4}{\line(1,0){0.09}}
\multiput(43.45,27.12)(0.08,0.09){4}{\line(0,1){0.09}}
\multiput(43.79,27.49)(0.08,0.09){4}{\line(0,1){0.09}}
\multiput(44.11,27.86)(0.08,0.1){4}{\line(0,1){0.1}}
\multiput(44.42,28.25)(0.07,0.1){4}{\line(0,1){0.1}}
\multiput(44.71,28.65)(0.09,0.14){3}{\line(0,1){0.14}}
\multiput(44.98,29.06)(0.09,0.14){3}{\line(0,1){0.14}}
\multiput(45.24,29.48)(0.08,0.14){3}{\line(0,1){0.14}}
\multiput(45.48,29.92)(0.07,0.15){3}{\line(0,1){0.15}}
\multiput(45.7,30.36)(0.07,0.15){3}{\line(0,1){0.15}}
\multiput(45.9,30.81)(0.09,0.23){2}{\line(0,1){0.23}}
\multiput(46.09,31.27)(0.08,0.23){2}{\line(0,1){0.23}}
\multiput(46.25,31.73)(0.07,0.24){2}{\line(0,1){0.24}}
\multiput(46.4,32.21)(0.06,0.24){2}{\line(0,1){0.24}}
\multiput(46.53,32.68)(0.11,0.48){1}{\line(0,1){0.48}}
\multiput(46.64,33.17)(0.09,0.49){1}{\line(0,1){0.49}}
\multiput(46.72,33.65)(0.07,0.49){1}{\line(0,1){0.49}}
\multiput(46.79,34.14)(0.05,0.49){1}{\line(0,1){0.49}}
\multiput(46.84,34.64)(0.03,0.49){1}{\line(0,1){0.49}}
\multiput(46.87,35.13)(0.01,0.49){1}{\line(0,1){0.49}}

\linethickness{0.3mm}
\multiput(33.96,22.21)(0.1,0.07){4}{\line(1,0){0.1}}
\multiput(33.57,21.9)(0.1,0.08){4}{\line(1,0){0.1}}
\multiput(33.2,21.56)(0.09,0.09){4}{\line(1,0){0.09}}
\multiput(32.85,21.19)(0.09,0.09){4}{\line(0,1){0.09}}
\multiput(32.54,20.8)(0.08,0.1){4}{\line(0,1){0.1}}
\multiput(32.25,20.39)(0.07,0.1){4}{\line(0,1){0.1}}
\multiput(31.98,19.96)(0.09,0.14){3}{\line(0,1){0.14}}
\multiput(31.75,19.51)(0.08,0.15){3}{\line(0,1){0.15}}
\multiput(31.56,19.05)(0.1,0.23){2}{\line(0,1){0.23}}
\multiput(31.39,18.58)(0.08,0.24){2}{\line(0,1){0.24}}
\multiput(31.26,18.09)(0.07,0.24){2}{\line(0,1){0.24}}
\multiput(31.16,17.6)(0.1,0.49){1}{\line(0,1){0.49}}
\multiput(31.1,17.1)(0.06,0.5){1}{\line(0,1){0.5}}
\multiput(31.07,16.6)(0.03,0.5){1}{\line(0,1){0.5}}
\multiput(31.07,16.6)(0.01,-0.5){1}{\line(0,-1){0.5}}
\multiput(31.07,16.09)(0.04,-0.5){1}{\line(0,-1){0.5}}
\multiput(31.12,15.59)(0.08,-0.5){1}{\line(0,-1){0.5}}
\multiput(31.19,15.09)(0.11,-0.49){1}{\line(0,-1){0.49}}
\multiput(31.3,14.6)(0.07,-0.24){2}{\line(0,-1){0.24}}
\multiput(31.45,14.12)(0.09,-0.24){2}{\line(0,-1){0.24}}
\multiput(31.63,13.65)(0.07,-0.15){3}{\line(0,-1){0.15}}
\multiput(31.84,13.19)(0.08,-0.15){3}{\line(0,-1){0.15}}
\multiput(32.08,12.75)(0.09,-0.14){3}{\line(0,-1){0.14}}
\multiput(32.35,12.33)(0.08,-0.1){4}{\line(0,-1){0.1}}
\multiput(32.66,11.93)(0.08,-0.1){4}{\line(0,-1){0.1}}
\multiput(32.99,11.55)(0.09,-0.09){4}{\line(0,-1){0.09}}
\multiput(33.34,11.19)(0.09,-0.08){4}{\line(1,0){0.09}}
\multiput(33.72,10.86)(0.1,-0.08){4}{\line(1,0){0.1}}
\multiput(34.12,10.55)(0.14,-0.09){3}{\line(1,0){0.14}}
\multiput(34.54,10.28)(0.15,-0.08){3}{\line(1,0){0.15}}
\multiput(34.98,10.03)(0.15,-0.07){3}{\line(1,0){0.15}}
\multiput(35.44,9.82)(0.23,-0.09){2}{\line(1,0){0.23}}
\multiput(35.9,9.64)(0.24,-0.07){2}{\line(1,0){0.24}}
\multiput(36.39,9.49)(0.49,-0.11){1}{\line(1,0){0.49}}

\linethickness{0.3mm}
\multiput(46.68,1.02)(0.08,-0.24){2}{\line(0,-1){0.24}}
\multiput(46.5,1.49)(0.09,-0.23){2}{\line(0,-1){0.23}}
\multiput(46.3,1.95)(0.1,-0.23){2}{\line(0,-1){0.23}}
\multiput(46.09,2.4)(0.07,-0.15){3}{\line(0,-1){0.15}}
\multiput(45.86,2.84)(0.08,-0.15){3}{\line(0,-1){0.15}}
\multiput(45.61,3.27)(0.08,-0.14){3}{\line(0,-1){0.14}}
\multiput(45.35,3.7)(0.09,-0.14){3}{\line(0,-1){0.14}}
\multiput(45.07,4.11)(0.07,-0.1){4}{\line(0,-1){0.1}}
\multiput(44.77,4.51)(0.07,-0.1){4}{\line(0,-1){0.1}}
\multiput(44.46,4.9)(0.08,-0.1){4}{\line(0,-1){0.1}}
\multiput(44.13,5.28)(0.08,-0.09){4}{\line(0,-1){0.09}}
\multiput(43.79,5.64)(0.09,-0.09){4}{\line(0,-1){0.09}}
\multiput(43.43,5.99)(0.09,-0.09){4}{\line(1,0){0.09}}
\multiput(43.06,6.33)(0.09,-0.08){4}{\line(1,0){0.09}}
\multiput(42.68,6.65)(0.1,-0.08){4}{\line(1,0){0.1}}
\multiput(42.29,6.96)(0.1,-0.08){4}{\line(1,0){0.1}}
\multiput(41.88,7.25)(0.1,-0.07){4}{\line(1,0){0.1}}
\multiput(41.47,7.52)(0.14,-0.09){3}{\line(1,0){0.14}}
\multiput(41.04,7.78)(0.14,-0.09){3}{\line(1,0){0.14}}
\multiput(40.6,8.02)(0.15,-0.08){3}{\line(1,0){0.15}}
\multiput(40.15,8.24)(0.15,-0.07){3}{\line(1,0){0.15}}
\multiput(39.7,8.45)(0.15,-0.07){3}{\line(1,0){0.15}}
\multiput(39.24,8.64)(0.23,-0.09){2}{\line(1,0){0.23}}
\multiput(38.77,8.81)(0.23,-0.08){2}{\line(1,0){0.23}}
\multiput(38.29,8.96)(0.24,-0.08){2}{\line(1,0){0.24}}
\multiput(37.81,9.09)(0.24,-0.07){2}{\line(1,0){0.24}}
\multiput(37.32,9.2)(0.49,-0.11){1}{\line(1,0){0.49}}
\multiput(36.83,9.3)(0.49,-0.09){1}{\line(1,0){0.49}}

\linethickness{1mm}
\put(36.88,12.5){\line(0,1){23.12}}
\end{picture}
\caption{The double twist relation}\label{ch5doubletwist}
\end{figure}
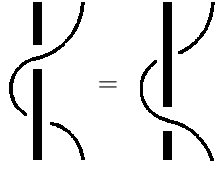

\nl As for the Kauffman tangle algebra described in
\cite{MorWas}, we
define an algebra for the tangles of type $\D^{(1)}$. Let
${\U}_n^{(1)}$ be the monoid of $(n,n)$-tangles of type $\D^{(1)}$
modulo regular isotopy to the right of the pole and Reidemeister~II
for each strand interacting with the pole. 
Similarly, we define the submonoids $\U_n^{(2)}$ and $\U_n$
corresponding to $(n,n)$-tangles of type $\D^{(2)}$ and $\D$, respectively.
Note that the product of two tangles of each type is again of the
same type.

As the tangles contain a pole of order two, the tangles in the monoid
$\U_n^{(1)}$ satisfy the {\em double twist relation}, as shown in
Figure~\ref{ch5doubletwist}. To see this, just compose both sides of
Figure~\ref{ch5doubletwist2} with the right hand side of
Figure~\ref{ch5doubletwist} and use Reidemeister~II around the pole.

Products of tangles with an even number of pole twists have an
even number of pole twists, and the relations corresponding to the
pole of order two preserves this. Therefore the composition of two tangles
of type $\D^{(2)}$ is again of type $\D^{(2)}$.

We now introduce the tangle algebra $\KT(\D_n)^{(1)}$ as a quotient of the
monoid algebra $R[{\U}_n^{(1)}]$. Later, in Definition \ref{df:KTDn}, the
algebra $\KT(\D_n)$ of our prime interest will appear as a subalgebra of
$\KT(\D_n)^{(1)}$.

\begin{Def}\label{ch5main}
\label{df:KTDn}
\rm The {\em tangle algebra} $\KT(\D_n)^{(1)}$ over $R$ is the quotient
algebra obtained from the monoid algebra $R[ {\U}_n^{(1)}]$ by factoring out
the following seven relations.  Here, the pictures indicate tangles which
differ only in the region shown.

\begin{enumerate}[{\rm (i)}]
\item The Kauffman skein relation

\begin{center}
\unitlength 0.5mm
% [inline block 2: 6 envs, 112054 chars -> data_tex | \begin{picture}(126.92,40)(0,0) \put(63.6,20.97){\makebox(0,0)[cc]{$=$}}...]

\end{center}
\end{enumerate}

The relations generate a two-sided ideal in $R[\U_n^{(1)}]$.  Thus,
composition of tangles of type $\D^{(1)}$ induces an associative bilinear
multiplication on $\KT(\D_n)^{(1)}$, making $\KT(\D_n)^{(1)}$ an algebra over
$R$.  The subalgebra of $\KT(\D_n)^{(1)}$ generated by all tangles of type
$\D^{(2)}$ is denoted $\KT(\D_n)^{(2)}$.  

For $n\ge 0$ define the {\em Kauffman tangle algebra of type} $\D$ on $n$
nodes, denoted $\KT(\D_n)$, to be the subalgebra of $\KT(\D_n)^{(2)}$
generated by all tangles of type $\D$. 
\end{Def}

\begin{Remarks}\label{rmk:epsetal}
\rm(i). 
Since the relations (i)--(vii) are homogeneous with respect to the parity
of the number of pole twists, $\KT(\D_n)^{(2)}$ is the $R$-linear
span of tangles of type $\D^{(2)}$.  Also, the Kauffman tangle algebra
$\KT(\D_n)$ is easily seen to be the linear span of all tangles in
$\KT(\D_n)^{(2)}$ of type $\D$.

\rm(ii).
In using the pole-related self-intersection relations, care must be taken
to get the correct over crossings versus under crossings.  In both cases, the
correct diagram is obtained by turning the diagram upside down (i.e., turning
the paper 180 degrees around the horizontal axis perpendicular to the pole) so
that the over crossing at the bottom of the left hand side of (v) becomes an
under crossing at the top of the right hand side.  Recall that for pole twists
without a self-intersection, there is no distinction because the pole has
order two.

(iii).
If $S$ is a ring containing $R$ in which $m$ is invertible, then the relations
(v), (vi), and (vii) of Definition \ref{ch5main} for $\KT(\D_n)^{(1)}\otimes_R
S$ follow from the others, see \cite{DAHG} for details.

(iv).
For $n\ge1$, the algebras $\KT(\D_n)$ have the desirable property that
$\KT(\D_{n-1})$ is a natural subalgebra.  In fact, 
addition of a
strand without crossings to the right side of the tangle determines a natural
homomorphism $i : \KT(\D_{n-1})^{(1)} \to \KT(\D_n)^{(1)}$.  We also have a map
$\eps : \KT(\D_n)^{(1)} \to \KT(\D_{n-1})^{(1)}$ defined on tangles $T$ by
$$\eps(T) = \delta^{-1}cl_n(T),$$ where $cl_n : \KT(\D_n)^{(1)} \to
\KT(\D_{n-1})^{(1)}$ is the map defined by connecting the two endpoints in
$K$ on the right, viz.\ $(n,0,0)$ and $(n,0,1)$, of an $(n,n)$-tangle
by a strand with no crossings, self-intersections or pole twists,
see Figure~\ref{pic:cln}. These maps obviously respect regular isotopy and the
defining relations of $\KT(\D_n)^{(1)}$. 
As $\eps \circ i(T) = T$ for $T \in \KT(\D_{n-1})^{(1)}$, we can regard
$\KT(\D_{n-1})^{(1)}$ as a subalgebra of $\KT(\D_{n})^{(1)}$.
It is easily seen that suitable
restrictions lead to embeddings of $\KT(\D_{n-1})^{(2)} $
into $\KT(\D_n)^{(2)}$ and of
$\KT(\D_{n-1})$ into $\KT(\D_n)$.

\begin{figure}[hbtp]
\unitlength 1mm
\unitlength 1mm
\begin{picture}(44.4,20.01)(0,0)
\linethickness{0.3mm}
\put(44,5.63){\line(0,1){12.37}}
\put(8.75,13.75){\makebox(0,0)[cc]{$cl_n$ :}}

\linethickness{1mm}
\put(19.87,2.94){\line(0,1){17.06}}
\linethickness{0.3mm}
\put(16.25,18.05){\line(1,0){23.75}}
\linethickness{0.3mm}
\put(16.25,5.68){\line(0,1){12.37}}
\linethickness{0.3mm}
\put(39.99,5.68){\line(0,1){12.37}}
\linethickness{0.3mm}
\put(16.25,5.68){\line(1,0){23.75}}
\put(27.49,11.88){\makebox(0,0)[cc]{T}}

\linethickness{0.3mm}
\multiput(43.74,18.43)(0.09,-0.14){3}{\line(0,-1){0.14}}
\multiput(43.43,18.82)(0.08,-0.1){4}{\line(0,-1){0.1}}
\multiput(43.07,19.16)(0.09,-0.09){4}{\line(1,0){0.09}}
\multiput(42.67,19.45)(0.1,-0.07){4}{\line(1,0){0.1}}
\multiput(42.23,19.68)(0.15,-0.08){3}{\line(1,0){0.15}}
\multiput(41.77,19.86)(0.23,-0.09){2}{\line(1,0){0.23}}
\multiput(41.28,19.97)(0.49,-0.11){1}{\line(1,0){0.49}}
\multiput(40.79,20.01)(0.5,-0.04){1}{\line(1,0){0.5}}
\multiput(40.29,19.99)(0.5,0.02){1}{\line(1,0){0.5}}
\multiput(39.8,19.89)(0.49,0.09){1}{\line(1,0){0.49}}
\multiput(39.33,19.74)(0.24,0.08){2}{\line(1,0){0.24}}
\multiput(38.88,19.52)(0.15,0.07){3}{\line(1,0){0.15}}
\multiput(38.47,19.24)(0.14,0.09){3}{\line(1,0){0.14}}
\multiput(38.1,18.91)(0.09,0.08){4}{\line(1,0){0.09}}
\multiput(37.77,18.54)(0.08,0.09){4}{\line(0,1){0.09}}
\multiput(37.5,18.12)(0.09,0.14){3}{\line(0,1){0.14}}

\linethickness{0.3mm}
\multiput(38.12,5.62)(0.09,-0.14){3}{\line(0,-1){0.14}}
\multiput(38.38,5.19)(0.08,-0.1){4}{\line(0,-1){0.1}}
\multiput(38.7,4.81)(0.09,-0.08){4}{\line(1,0){0.09}}
\multiput(39.07,4.48)(0.14,-0.09){3}{\line(1,0){0.14}}
\multiput(39.5,4.21)(0.23,-0.1){2}{\line(1,0){0.23}}
\multiput(39.95,4.02)(0.24,-0.06){2}{\line(1,0){0.24}}
\multiput(40.44,3.89)(0.5,-0.05){1}{\line(1,0){0.5}}
\multiput(40.94,3.85)(0.5,0.03){1}{\line(1,0){0.5}}
\multiput(41.44,3.88)(0.49,0.11){1}{\line(1,0){0.49}}
\multiput(41.92,3.99)(0.23,0.09){2}{\line(1,0){0.23}}
\multiput(42.39,4.17)(0.14,0.08){3}{\line(1,0){0.14}}
\multiput(42.82,4.43)(0.1,0.08){4}{\line(1,0){0.1}}
\multiput(43.2,4.75)(0.08,0.09){4}{\line(0,1){0.09}}
\multiput(43.53,5.12)(0.09,0.14){3}{\line(0,1){0.14}}
\multiput(43.8,5.54)(0.1,0.23){2}{\line(0,1){0.23}}
\end{picture}
\caption{The closure of the rightmost strand of a tangle}
\label{pic:cln}
\end{figure}
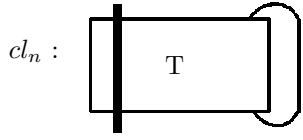

\end{Remarks}

\np We now derive a number of additional relations from the defining relations
concerning small regions of the tangle diagrams containing a part of the
pole. These relations will prove to be extremely useful in the full
understanding of these algebras as they describe the
interaction between the pole and the other strands of the tangles.

The commuting relation (ii) no longer holds if the upper crossing at each side
is changed. Other variations however do hold:

\begin{Lm}\label{lm:crossChanges}
The crossings in (ii), (v), and (vi)
of Definition \ref{ch5main} are all positive.
These relations also hold if the
signs are changed to negative as follows.
\begin{itemize}
\item For (ii): an upper crossing at one side and a lower crossing at the
other side of the equation or all four crossings.
\item For (v) and (vi): both crossings.
\end{itemize}
\end{Lm}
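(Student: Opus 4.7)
\smallskip

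The plan is to derive each sign-changed variant from the original by applying the Kauffman skein relation (i) of Definition \ref{ch5main}, which rewrites a positive crossing as a negative crossing plus $m$ times the difference of the two planar smoothings (the identity two-strand piece and the cup-cap piece). Substituting this at a chosen crossing in an already established identity turns it into a new identity of the same shape, provided the ``correction terms'' produced on the two sides match after simplification using earlier relations.

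First I would treat the case of (ii) in which an upper crossing on one side and the corresponding lower crossing on the other side are flipped. Apply the skein relation to just those two crossings; because the commuting relation (ii) holds in its original form, the two ``main'' terms match, and the two pairs of correction terms differ only by tangles that can be matched using isotopy around the pole and Reidemeister II against the pole. The case in which all four crossings are flipped follows either by applying this single-flip argument twice, or more cleanly by turning the picture upside down around the horizontal axis perpendicular to the pole, which sends positive crossings to negative ones, swaps the top and bottom boundary points, and preserves the order-two pole relation; under this symmetry the original identity (ii) becomes the all-flipped version. For (v) and (vi) with both crossings flipped, the same upside-down argument applies verbatim, because each of these relations is symmetric under the $180^\circ$ flip once one recalls that the twofold pole twist identification and Reidemeister II against the pole are both preserved by that flip. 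Alternatively, one may apply the skein relation to each of the two crossings in succession; the four resulting correction terms then cancel in pairs after invoking the first pole-related self-intersection relation (v) together with the closed pole loop relation (vii) and the idempotent relation (iv).

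The main obstacle will be the careful bookkeeping in the skein-based argument for (ii): after expanding both crossings on each side, one gets a sum of nine tangles on each side, and verifying that these sums agree requires the commuting relation applied to simpler diagrams (without one or both of the crossings) together with planar isotopy past the pole. The cleaner route is the upside-down symmetry, but it requires one preliminary check: that the involution ``flip the diagram about the horizontal axis perpendicular to the pole'' descends to an anti-homomorphism of $\KT(\D_n)^{(1)}$ sending $g_i$ to $g_i^{-1}$ and fixing $e_i$, which amounts to verifying that each of the seven defining relations (i)--(vii) is preserved under this flip. Once that verification is in place, all three sign-changed variants in the lemma follow immediately from their positive-crossing counterparts.
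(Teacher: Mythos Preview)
Your skein-based approach is on the right track and is essentially the paper's argument, but the alternative ``upside-down symmetry'' route is circular. To show that the $180^\circ$ flip descends to an anti-homomorphism of $\KT(\D_n)^{(1)}$, you must check that the flip of each defining relation lies in the ideal generated by (i)--(vii). But the flip of relation (v) with its positive crossings is precisely relation (v) with negative crossings, which is exactly what you are trying to prove; the same happens for (ii) and (vi). So the flip cannot be invoked as an anti-homomorphism of the quotient until \emph{after} the lemma is established by other means; your ``preliminary check'' is the lemma itself.

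Within the skein argument there is also an ordering issue and a missing ingredient in your treatment of (ii). You claim that after expanding the two chosen crossings the correction terms match ``using isotopy around the pole and Reidemeister II against the pole.'' Isotopy and Reidemeister II alone do not suffice here: the paper identifies these $m$-terms using two invocations of the original (positive-crossing) relation (v). This forces the proof order: first establish the negative-crossing versions of (v) and (vi) --- where the correction terms are matched via (vii), and for (vi) additionally via Reidemeister II --- and only then handle (ii), using (v) to equate the extra terms. The all-four-crossings case of (ii) then follows, as you say, from two applications of the two-crossing case. Your skein alternative for (v) and (vi) is fine in spirit, though (vii) is the relation actually needed there rather than (v) or (iv).
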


\nl\begin{proof} The
first statement is evident.  For (v),
this follows by application of (i) to the upper crossings at
both sides and identification of the
terms with coefficients $m$ and $-m$ by use of (vii). For (vi), the analogous
procedure works with the additional use of Reidemeister II.

For (ii), application of (i) to
the upper crossing at the left hand side and the lower crossing at the right
hand side and subsequent identification of the tangles with the same
coefficients by means of Reidemeister II and two invocations of (v) will lead
to the version of (ii) in which two crossings have changed signs.
The other version, with left hand lower crossing and right hand upper crossing
changed, is proved similarly.

For all four crossings, application of (i) to all four crossings leads to an
identity of the required kind after suitable application of Reidemeister II,
(vii), (vi), and the newly obtained version of (v) to the terms that are
multiples of $m$.  Alternatively, two applications of the first kind gives the
result.
\end{proof}

\np
One of the newly obtained versions of (ii)
is given in Figure \ref{pic:commuting2rel}.

\begin{figure}[htpb]
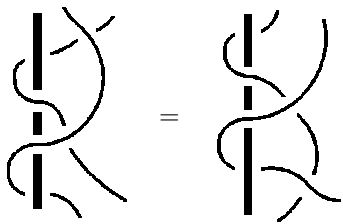

\unitlength 0.7mm
% [inline block 3: 1 envs, 23177 chars -> data_tex | \begin{picture}(74.5,45)(0,0) \put(35,20){\makebox(0,0)[cc]{$=$}}...]

\caption{A second commuting relation, cf.~Lemma \ref{lm:crossChanges}}\label{pic:commuting2rel}
\end{figure}

\np Relation (iii) of Definition \ref{ch5main} shows that in general a
self-intersection of a strand can be replaced by $l^{\pm 1}$. This, however,
is not the case when the strand twists around the pole before intersecting
itself.  In contrast with the self-intersection relation, the two pole-related
self-intersection relations, (v) and (vi), preserve a self-intersection albeit
that the self-intersection may change strands.  The strand involved twists
around the pole and next crosses itself at the part of the strand before the
twist. This combination of twist and self-intersection is called a {\em
pole-related self-intersection}.

By the first pole-related self-intersection relation a twist of that type can
be moved to neighboring twists and then to others.  By the second pole-related
self-intersection relation, a twist of that type can be moved to any segment
of a strand which is accesible to the pole.  This means there is an
unobstructed region between the segment and the pole.  This shows that such a
{pole-related self-intersection} can be moved freely to many strands in
the tangle.  This is true even for a strand not twisting around the pole
provided a segment is accessible to the pole.  It can be given two pole twists
by use of the double twist relation of
Figure~\ref{ch5doubletwist2} (read backwards). Now the pole-related
self-intersection can be moved to this strand using one of the pole-related
self-intersection relations.
This leads to the following observation.

\begin{Remark}\label{rmk:onetwist}
\rm If a tangle has more than one pole-related self-intersection, then by (v)
and (vi) they can all be moved to a single strand. The tangle obtained in this
way with more than one pole-related self-intersection can be rewritten as a
linear combination of tangles with fewer pole-related self-intersections using
the Kauffman skein relations. Thus each element of $\KT(\D_n)^{(1)}$ is a
linear combination of tangles with at most one pole-related self-intersection.
\end{Remark}

\np While working with tangles of type $\D$, we will encounter closed loops
twisting around the pole. Here are some relations involving these loops.

\begin{Prop} \label{prop:addrel}
The tangles of $\KT(\D_n)^{(1)}$ satisfy the following relations.

\begin{enumerate}[{\rm  (i)}] 
\item The second closed pole loop relation,

\begin{center}
\unitlength 0.8mm
% [inline block 4: 2 envs, 39628 chars -> data_tex | \begin{picture}(43.82,50)(0,0) \put(20.62,25){\makebox(0,0)[cc]{$=$}}...]

\end{center}
\end{enumerate}
Again, the pictures indicate tangles which differ only in the region shown.
\end{Prop}

\nl
\begin{proof} (i). 
Using Reidemeister II, deform the partial strand and the closed loop in such
a way that, after its pole twist, the partial strand has two over crossings
with the closed loop. Apply the second commuting relation, Lemma
\ref{lm:crossChanges}, to a large enough region containing the closed loop
crossing the partial strand.  Notice the over crossings become under crossings
and again use Reidemeister II to shrink the loop so it does not intersect the
strands.  The loop is now on the other side of the partial strands.  This
gives (i).

\nl (ii).  This follows from the first closed pole loop relation (vii),
applied to partial strands.  To get the second equality, consider the bottom
closed loop on the right hand diagram to be a twist around the pole joined to
an arc to make it a closed loop.  After applying (vii), there is an isolated
closed loop not around the pole which contributes $\delta $ to the middle
picture.  To see that the left hand diagram is equal to the right diagram
in $\KT(\D_n)^{(1)}$, use
the second closed pole loop relation in the left hand diagram to put the bottom
closed twist around the pole below the lower partial strand.  Now distort the
lower twist around the pole so that it has a segment curving upwards before
twisting around the pole so as to be able to apply (vii).  Next use (vii) to
remove the twist at the top and give two twists around the pole to this lower
twist.  As the pole has order two, this is the diagram at the right and
so they are equal in $\KT(\D_n)^{(1)}$.
\end{proof}

\begin{figure}[htbp]
\unitlength 1mm
\begin{picture}(60.33,39)(0,0)
\put(6,2){\makebox(0,0)[cc]{$\Theta$}}

\put(31,2){\makebox(0,0)[cc]{$\Xi^+$}}

\put(56,2){\makebox(0,0)[cc]{$\Xi^{-}$}}

\linethickness{0.3mm}
\multiput(8,7.64)(0.23,0.1){2}{\line(1,0){0.23}}
\multiput(8.46,7.83)(0.15,0.08){3}{\line(1,0){0.15}}
\multiput(8.9,8.07)(0.14,0.09){3}{\line(1,0){0.14}}
\multiput(9.32,8.34)(0.1,0.08){4}{\line(1,0){0.1}}
\multiput(9.71,8.64)(0.09,0.08){4}{\line(1,0){0.09}}
\multiput(10.07,8.98)(0.08,0.09){4}{\line(0,1){0.09}}
\multiput(10.41,9.35)(0.08,0.1){4}{\line(0,1){0.1}}
\multiput(10.71,9.75)(0.09,0.14){3}{\line(0,1){0.14}}
\multiput(10.97,10.17)(0.08,0.15){3}{\line(0,1){0.15}}
\multiput(11.2,10.61)(0.09,0.23){2}{\line(0,1){0.23}}
\multiput(11.39,11.07)(0.07,0.24){2}{\line(0,1){0.24}}
\multiput(11.54,11.55)(0.11,0.49){1}{\line(0,1){0.49}}
\multiput(11.64,12.04)(0.06,0.49){1}{\line(0,1){0.49}}
\multiput(11.7,12.53)(0.02,0.5){1}{\line(0,1){0.5}}
\multiput(11.7,13.53)(0.02,-0.5){1}{\line(0,-1){0.5}}
\multiput(11.63,14.02)(0.07,-0.49){1}{\line(0,-1){0.49}}
\multiput(11.52,14.51)(0.11,-0.49){1}{\line(0,-1){0.49}}
\multiput(11.37,14.98)(0.08,-0.24){2}{\line(0,-1){0.24}}
\multiput(11.18,15.44)(0.1,-0.23){2}{\line(0,-1){0.23}}
\multiput(10.95,15.88)(0.08,-0.15){3}{\line(0,-1){0.15}}
\multiput(10.68,16.3)(0.09,-0.14){3}{\line(0,-1){0.14}}
\multiput(10.38,16.69)(0.08,-0.1){4}{\line(0,-1){0.1}}
\multiput(10.04,17.06)(0.08,-0.09){4}{\line(0,-1){0.09}}
\multiput(9.67,17.4)(0.09,-0.08){4}{\line(1,0){0.09}}
\multiput(9.28,17.7)(0.1,-0.08){4}{\line(1,0){0.1}}
\multiput(8.85,17.97)(0.14,-0.09){3}{\line(1,0){0.14}}
\multiput(8.41,18.19)(0.15,-0.08){3}{\line(1,0){0.15}}
\multiput(7.95,18.38)(0.23,-0.1){2}{\line(1,0){0.23}}
\multiput(7.48,18.53)(0.24,-0.07){2}{\line(1,0){0.24}}
\multiput(6.99,18.64)(0.49,-0.11){1}{\line(1,0){0.49}}
\multiput(6.5,18.71)(0.49,-0.06){1}{\line(1,0){0.49}}
\multiput(6,18.73)(0.5,-0.02){1}{\line(1,0){0.5}}
\multiput(5.5,18.71)(0.5,0.02){1}{\line(1,0){0.5}}
\multiput(5.01,18.64)(0.49,0.06){1}{\line(1,0){0.49}}
\multiput(4.52,18.53)(0.49,0.11){1}{\line(1,0){0.49}}
\multiput(4.05,18.38)(0.24,0.07){2}{\line(1,0){0.24}}
\multiput(3.59,18.19)(0.23,0.1){2}{\line(1,0){0.23}}
\multiput(3.15,17.97)(0.15,0.08){3}{\line(1,0){0.15}}
\multiput(2.72,17.7)(0.14,0.09){3}{\line(1,0){0.14}}
\multiput(2.33,17.4)(0.1,0.08){4}{\line(1,0){0.1}}
\multiput(1.96,17.06)(0.09,0.08){4}{\line(1,0){0.09}}
\multiput(1.62,16.69)(0.08,0.09){4}{\line(0,1){0.09}}
\multiput(1.32,16.3)(0.08,0.1){4}{\line(0,1){0.1}}
\multiput(1.05,15.88)(0.09,0.14){3}{\line(0,1){0.14}}
\multiput(0.82,15.44)(0.08,0.15){3}{\line(0,1){0.15}}
\multiput(0.63,14.98)(0.1,0.23){2}{\line(0,1){0.23}}
\multiput(0.48,14.51)(0.08,0.24){2}{\line(0,1){0.24}}
\multiput(0.37,14.02)(0.11,0.49){1}{\line(0,1){0.49}}
\multiput(0.3,13.53)(0.07,0.49){1}{\line(0,1){0.49}}
\multiput(0.28,13.03)(0.02,0.5){1}{\line(0,1){0.5}}
\multiput(0.28,13.03)(0.02,-0.5){1}{\line(0,-1){0.5}}
\multiput(0.3,12.53)(0.06,-0.49){1}{\line(0,-1){0.49}}
\multiput(0.36,12.04)(0.11,-0.49){1}{\line(0,-1){0.49}}
\multiput(0.46,11.55)(0.07,-0.24){2}{\line(0,-1){0.24}}
\multiput(0.61,11.07)(0.09,-0.23){2}{\line(0,-1){0.23}}
\multiput(0.8,10.61)(0.08,-0.15){3}{\line(0,-1){0.15}}
\multiput(1.03,10.17)(0.09,-0.14){3}{\line(0,-1){0.14}}
\multiput(1.29,9.75)(0.08,-0.1){4}{\line(0,-1){0.1}}
\multiput(1.59,9.35)(0.08,-0.09){4}{\line(0,-1){0.09}}
\multiput(1.93,8.98)(0.09,-0.08){4}{\line(1,0){0.09}}
\multiput(2.29,8.64)(0.1,-0.08){4}{\line(1,0){0.1}}
\multiput(2.68,8.34)(0.14,-0.09){3}{\line(1,0){0.14}}
\multiput(3.1,8.07)(0.15,-0.08){3}{\line(1,0){0.15}}
\multiput(3.54,7.83)(0.23,-0.1){2}{\line(1,0){0.23}}

\linethickness{1mm}
\put(6,21){\line(0,1){11}}
\linethickness{0.3mm}
\multiput(8,23.37)(0.23,0.1){2}{\line(1,0){0.23}}
\multiput(8.46,23.56)(0.15,0.08){3}{\line(1,0){0.15}}
\multiput(8.9,23.8)(0.14,0.09){3}{\line(1,0){0.14}}
\multiput(9.32,24.07)(0.1,0.08){4}{\line(1,0){0.1}}
\multiput(9.71,24.37)(0.09,0.08){4}{\line(1,0){0.09}}
\multiput(10.07,24.71)(0.08,0.09){4}{\line(0,1){0.09}}
\multiput(10.41,25.08)(0.08,0.1){4}{\line(0,1){0.1}}
\multiput(10.71,25.48)(0.09,0.14){3}{\line(0,1){0.14}}
\multiput(10.97,25.9)(0.08,0.15){3}{\line(0,1){0.15}}
\multiput(11.2,26.34)(0.09,0.23){2}{\line(0,1){0.23}}
\multiput(11.39,26.8)(0.07,0.24){2}{\line(0,1){0.24}}
\multiput(11.54,27.28)(0.11,0.49){1}{\line(0,1){0.49}}
\multiput(11.64,27.77)(0.06,0.49){1}{\line(0,1){0.49}}
\multiput(11.7,28.26)(0.02,0.5){1}{\line(0,1){0.5}}
\multiput(11.7,29.26)(0.02,-0.5){1}{\line(0,-1){0.5}}
\multiput(11.63,29.75)(0.07,-0.49){1}{\line(0,-1){0.49}}
\multiput(11.52,30.24)(0.11,-0.49){1}{\line(0,-1){0.49}}
\multiput(11.37,30.71)(0.08,-0.24){2}{\line(0,-1){0.24}}
\multiput(11.18,31.17)(0.1,-0.23){2}{\line(0,-1){0.23}}
\multiput(10.95,31.61)(0.08,-0.15){3}{\line(0,-1){0.15}}
\multiput(10.68,32.03)(0.09,-0.14){3}{\line(0,-1){0.14}}
\multiput(10.38,32.42)(0.08,-0.1){4}{\line(0,-1){0.1}}
\multiput(10.04,32.79)(0.08,-0.09){4}{\line(0,-1){0.09}}
\multiput(9.67,33.13)(0.09,-0.08){4}{\line(1,0){0.09}}
\multiput(9.28,33.43)(0.1,-0.08){4}{\line(1,0){0.1}}
\multiput(8.85,33.7)(0.14,-0.09){3}{\line(1,0){0.14}}
\multiput(8.41,33.92)(0.15,-0.08){3}{\line(1,0){0.15}}
\multiput(7.95,34.11)(0.23,-0.1){2}{\line(1,0){0.23}}
\multiput(7.48,34.26)(0.24,-0.07){2}{\line(1,0){0.24}}
\multiput(6.99,34.37)(0.49,-0.11){1}{\line(1,0){0.49}}
\multiput(6.5,34.44)(0.49,-0.06){1}{\line(1,0){0.49}}
\multiput(6,34.46)(0.5,-0.02){1}{\line(1,0){0.5}}
\multiput(5.5,34.44)(0.5,0.02){1}{\line(1,0){0.5}}
\multiput(5.01,34.37)(0.49,0.06){1}{\line(1,0){0.49}}
\multiput(4.52,34.26)(0.49,0.11){1}{\line(1,0){0.49}}
\multiput(4.05,34.11)(0.24,0.07){2}{\line(1,0){0.24}}
\multiput(3.59,33.92)(0.23,0.1){2}{\line(1,0){0.23}}
\multiput(3.15,33.7)(0.15,0.08){3}{\line(1,0){0.15}}
\multiput(2.72,33.43)(0.14,0.09){3}{\line(1,0){0.14}}
\multiput(2.33,33.13)(0.1,0.08){4}{\line(1,0){0.1}}
\multiput(1.96,32.79)(0.09,0.08){4}{\line(1,0){0.09}}
\multiput(1.62,32.42)(0.08,0.09){4}{\line(0,1){0.09}}
\multiput(1.32,32.03)(0.08,0.1){4}{\line(0,1){0.1}}
\multiput(1.05,31.61)(0.09,0.14){3}{\line(0,1){0.14}}
\multiput(0.82,31.17)(0.08,0.15){3}{\line(0,1){0.15}}
\multiput(0.63,30.71)(0.1,0.23){2}{\line(0,1){0.23}}
\multiput(0.48,30.24)(0.08,0.24){2}{\line(0,1){0.24}}
\multiput(0.37,29.75)(0.11,0.49){1}{\line(0,1){0.49}}
\multiput(0.3,29.26)(0.07,0.49){1}{\line(0,1){0.49}}
\multiput(0.28,28.76)(0.02,0.5){1}{\line(0,1){0.5}}
\multiput(0.28,28.76)(0.02,-0.5){1}{\line(0,-1){0.5}}
\multiput(0.3,28.26)(0.06,-0.49){1}{\line(0,-1){0.49}}
\multiput(0.36,27.77)(0.11,-0.49){1}{\line(0,-1){0.49}}
\multiput(0.46,27.28)(0.07,-0.24){2}{\line(0,-1){0.24}}
\multiput(0.61,26.8)(0.09,-0.23){2}{\line(0,-1){0.23}}
\multiput(0.8,26.34)(0.08,-0.15){3}{\line(0,-1){0.15}}
\multiput(1.03,25.9)(0.09,-0.14){3}{\line(0,-1){0.14}}
\multiput(1.29,25.48)(0.08,-0.1){4}{\line(0,-1){0.1}}
\multiput(1.59,25.08)(0.08,-0.09){4}{\line(0,-1){0.09}}
\multiput(1.93,24.71)(0.09,-0.08){4}{\line(1,0){0.09}}
\multiput(2.29,24.37)(0.1,-0.08){4}{\line(1,0){0.1}}
\multiput(2.68,24.07)(0.14,-0.09){3}{\line(1,0){0.14}}
\multiput(3.1,23.8)(0.15,-0.08){3}{\line(1,0){0.15}}
\multiput(3.54,23.56)(0.23,-0.1){2}{\line(1,0){0.23}}

\linethickness{1mm}
\put(6,37){\line(0,1){2}}
\linethickness{1mm}
\put(5.84,4.38){\line(0,1){12.74}}
\linethickness{1mm}
\put(30,5){\line(0,1){4}}
\linethickness{1mm}
\put(30,13){\line(0,1){7}}
\linethickness{0.3mm}
\multiput(26,13)(0.07,-0.1){4}{\line(0,-1){0.1}}
\multiput(26.29,12.58)(0.08,-0.1){4}{\line(0,-1){0.1}}
\multiput(26.62,12.2)(0.09,-0.09){4}{\line(1,0){0.09}}
\multiput(26.99,11.85)(0.1,-0.08){4}{\line(1,0){0.1}}
\multiput(27.4,11.54)(0.14,-0.09){3}{\line(1,0){0.14}}
\multiput(27.83,11.28)(0.15,-0.07){3}{\line(1,0){0.15}}
\multiput(28.28,11.06)(0.24,-0.09){2}{\line(1,0){0.24}}
\multiput(28.76,10.89)(0.25,-0.06){2}{\line(1,0){0.25}}
\multiput(29.25,10.76)(0.5,-0.07){1}{\line(1,0){0.5}}
\multiput(29.76,10.69)(0.51,-0.02){1}{\line(1,0){0.51}}
\multiput(30.26,10.67)(0.51,0.03){1}{\line(1,0){0.51}}
\multiput(30.77,10.7)(0.5,0.08){1}{\line(1,0){0.5}}
\multiput(31.27,10.78)(0.24,0.07){2}{\line(1,0){0.24}}
\multiput(31.76,10.91)(0.24,0.09){2}{\line(1,0){0.24}}
\multiput(32.23,11.1)(0.15,0.08){3}{\line(1,0){0.15}}
\multiput(32.69,11.33)(0.14,0.09){3}{\line(1,0){0.14}}
\multiput(33.11,11.6)(0.1,0.08){4}{\line(1,0){0.1}}
\multiput(33.51,11.92)(0.09,0.09){4}{\line(1,0){0.09}}
\multiput(33.87,12.27)(0.08,0.1){4}{\line(0,1){0.1}}
\multiput(34.2,12.66)(0.07,0.11){4}{\line(0,1){0.11}}
\multiput(34.48,13.08)(0.08,0.15){3}{\line(0,1){0.15}}
\multiput(34.72,13.53)(0.1,0.23){2}{\line(0,1){0.23}}
\multiput(34.91,14)(0.07,0.24){2}{\line(0,1){0.24}}
\multiput(35.05,14.49)(0.09,0.5){1}{\line(0,1){0.5}}
\multiput(35.15,14.99)(0.04,0.51){1}{\line(0,1){0.51}}
\multiput(35.18,16)(0.01,-0.51){1}{\line(0,-1){0.51}}
\multiput(35.11,16.51)(0.06,-0.5){1}{\line(0,-1){0.5}}
\multiput(35,17)(0.11,-0.49){1}{\line(0,-1){0.49}}

\linethickness{0.3mm}
\multiput(27.56,17.77)(0.15,0.08){3}{\line(1,0){0.15}}
\multiput(27.15,17.49)(0.1,0.07){4}{\line(1,0){0.1}}
\multiput(26.77,17.16)(0.09,0.08){4}{\line(1,0){0.09}}
\multiput(26.45,16.78)(0.08,0.09){4}{\line(0,1){0.09}}
\multiput(26.18,16.36)(0.09,0.14){3}{\line(0,1){0.14}}
\multiput(25.96,15.91)(0.07,0.15){3}{\line(0,1){0.15}}
\multiput(25.81,15.44)(0.08,0.24){2}{\line(0,1){0.24}}
\multiput(25.72,14.95)(0.09,0.49){1}{\line(0,1){0.49}}
\multiput(25.69,14.45)(0.03,0.5){1}{\line(0,1){0.5}}
\multiput(25.69,14.45)(0.04,-0.5){1}{\line(0,-1){0.5}}
\multiput(25.73,13.96)(0.1,-0.49){1}{\line(0,-1){0.49}}
\multiput(25.83,13.47)(0.08,-0.23){2}{\line(0,-1){0.23}}

\linethickness{0.3mm}
\multiput(32,18)(0.49,0.08){1}{\line(1,0){0.49}}
\multiput(32.49,18.08)(0.24,0.06){2}{\line(1,0){0.24}}
\multiput(32.98,18.21)(0.24,0.09){2}{\line(1,0){0.24}}
\multiput(33.45,18.39)(0.15,0.07){3}{\line(1,0){0.15}}
\multiput(33.9,18.6)(0.14,0.08){3}{\line(1,0){0.14}}
\multiput(34.33,18.85)(0.1,0.07){4}{\line(1,0){0.1}}
\multiput(34.74,19.15)(0.09,0.08){4}{\line(1,0){0.09}}
\multiput(35.12,19.47)(0.09,0.09){4}{\line(0,1){0.09}}
\multiput(35.47,19.83)(0.08,0.1){4}{\line(0,1){0.1}}
\multiput(35.78,20.22)(0.09,0.14){3}{\line(0,1){0.14}}
\multiput(36.06,20.64)(0.08,0.15){3}{\line(0,1){0.15}}
\multiput(36.3,21.08)(0.1,0.23){2}{\line(0,1){0.23}}
\multiput(36.49,21.54)(0.08,0.24){2}{\line(0,1){0.24}}
\multiput(36.65,22.02)(0.11,0.49){1}{\line(0,1){0.49}}
\multiput(36.76,22.5)(0.07,0.5){1}{\line(0,1){0.5}}
\multiput(36.83,23)(0.02,0.5){1}{\line(0,1){0.5}}
\multiput(36.83,24)(0.02,-0.5){1}{\line(0,-1){0.5}}
\multiput(36.76,24.5)(0.07,-0.5){1}{\line(0,-1){0.5}}
\multiput(36.65,24.98)(0.11,-0.49){1}{\line(0,-1){0.49}}
\multiput(36.49,25.46)(0.08,-0.24){2}{\line(0,-1){0.24}}
\multiput(36.3,25.92)(0.1,-0.23){2}{\line(0,-1){0.23}}
\multiput(36.06,26.36)(0.08,-0.15){3}{\line(0,-1){0.15}}
\multiput(35.78,26.78)(0.09,-0.14){3}{\line(0,-1){0.14}}
\multiput(35.47,27.17)(0.08,-0.1){4}{\line(0,-1){0.1}}
\multiput(35.12,27.53)(0.09,-0.09){4}{\line(0,-1){0.09}}
\multiput(34.74,27.85)(0.09,-0.08){4}{\line(1,0){0.09}}
\multiput(34.33,28.15)(0.1,-0.07){4}{\line(1,0){0.1}}
\multiput(33.9,28.4)(0.14,-0.08){3}{\line(1,0){0.14}}
\multiput(33.45,28.61)(0.15,-0.07){3}{\line(1,0){0.15}}
\multiput(32.98,28.79)(0.24,-0.09){2}{\line(1,0){0.24}}
\multiput(32.49,28.92)(0.24,-0.06){2}{\line(1,0){0.24}}
\multiput(32,29)(0.49,-0.08){1}{\line(1,0){0.49}}

\linethickness{1mm}
\put(30,24.12){\line(0,1){14.88}}
\linethickness{0.3mm}
\multiput(27.55,28.8)(0.22,0.1){2}{\line(1,0){0.22}}
\multiput(27.13,28.54)(0.14,0.09){3}{\line(1,0){0.14}}
\multiput(26.75,28.23)(0.09,0.08){4}{\line(1,0){0.09}}
\multiput(26.42,27.87)(0.08,0.09){4}{\line(0,1){0.09}}
\multiput(26.14,27.46)(0.07,0.1){4}{\line(0,1){0.1}}
\multiput(25.91,27.02)(0.07,0.15){3}{\line(0,1){0.15}}
\multiput(25.75,26.56)(0.08,0.23){2}{\line(0,1){0.23}}
\multiput(25.65,26.08)(0.1,0.48){1}{\line(0,1){0.48}}
\multiput(25.62,25.59)(0.03,0.49){1}{\line(0,1){0.49}}
\multiput(25.62,25.59)(0.04,-0.49){1}{\line(0,-1){0.49}}
\multiput(25.66,25.09)(0.1,-0.48){1}{\line(0,-1){0.48}}
\multiput(25.76,24.61)(0.08,-0.23){2}{\line(0,-1){0.23}}
\multiput(25.93,24.15)(0.08,-0.15){3}{\line(0,-1){0.15}}
\multiput(26.16,23.71)(0.07,-0.1){4}{\line(0,-1){0.1}}
\multiput(26.44,23.31)(0.08,-0.09){4}{\line(0,-1){0.09}}
\multiput(26.78,22.95)(0.1,-0.08){4}{\line(1,0){0.1}}
\multiput(27.16,22.65)(0.14,-0.08){3}{\line(1,0){0.14}}
\multiput(27.59,22.39)(0.23,-0.1){2}{\line(1,0){0.23}}
\multiput(28.04,22.2)(0.24,-0.07){2}{\line(1,0){0.24}}
\multiput(28.51,22.07)(0.49,-0.07){1}{\line(1,0){0.49}}

\linethickness{0.3mm}
\multiput(32.67,20.4)(0.08,-0.1){4}{\line(0,-1){0.1}}
\multiput(32.3,20.75)(0.09,-0.09){4}{\line(1,0){0.09}}
\multiput(31.9,21.07)(0.1,-0.08){4}{\line(1,0){0.1}}
\multiput(31.46,21.35)(0.14,-0.09){3}{\line(1,0){0.14}}
\multiput(31,21.58)(0.15,-0.08){3}{\line(1,0){0.15}}
\multiput(30.52,21.76)(0.24,-0.09){2}{\line(1,0){0.24}}
\multiput(30.02,21.89)(0.25,-0.07){2}{\line(1,0){0.25}}
\multiput(29.51,21.97)(0.51,-0.08){1}{\line(1,0){0.51}}
\multiput(29,22)(0.51,-0.03){1}{\line(1,0){0.51}}

\linethickness{1mm}
\put(54,5){\line(0,1){4}}
\linethickness{1mm}
\put(54,13){\line(0,1){7}}
\linethickness{0.3mm}
\multiput(50,13)(0.08,-0.1){4}{\line(0,-1){0.1}}
\multiput(50.31,12.61)(0.09,-0.09){4}{\line(0,-1){0.09}}
\multiput(50.66,12.26)(0.09,-0.08){4}{\line(1,0){0.09}}
\multiput(51.04,11.93)(0.1,-0.07){4}{\line(1,0){0.1}}
\multiput(51.44,11.64)(0.14,-0.08){3}{\line(1,0){0.14}}
\multiput(51.87,11.39)(0.15,-0.07){3}{\line(1,0){0.15}}
\multiput(52.32,11.18)(0.23,-0.08){2}{\line(1,0){0.23}}
\multiput(52.79,11.01)(0.24,-0.06){2}{\line(1,0){0.24}}
\multiput(53.27,10.89)(0.49,-0.08){1}{\line(1,0){0.49}}
\multiput(53.76,10.8)(0.5,-0.04){1}{\line(1,0){0.5}}
\multiput(54.25,10.77)(0.5,0.01){1}{\line(1,0){0.5}}
\multiput(54.75,10.78)(0.49,0.05){1}{\line(1,0){0.49}}
\multiput(55.25,10.83)(0.49,0.1){1}{\line(1,0){0.49}}
\multiput(55.73,10.93)(0.24,0.07){2}{\line(1,0){0.24}}
\multiput(56.21,11.07)(0.23,0.09){2}{\line(1,0){0.23}}
\multiput(56.67,11.25)(0.15,0.08){3}{\line(1,0){0.15}}
\multiput(57.11,11.48)(0.14,0.09){3}{\line(1,0){0.14}}
\multiput(57.54,11.74)(0.1,0.08){4}{\line(1,0){0.1}}
\multiput(57.93,12.04)(0.09,0.08){4}{\line(1,0){0.09}}
\multiput(58.3,12.38)(0.08,0.09){4}{\line(0,1){0.09}}
\multiput(58.63,12.75)(0.07,0.1){4}{\line(0,1){0.1}}
\multiput(58.93,13.14)(0.09,0.14){3}{\line(0,1){0.14}}
\multiput(59.19,13.57)(0.07,0.15){3}{\line(0,1){0.15}}
\multiput(59.42,14.01)(0.09,0.23){2}{\line(0,1){0.23}}
\multiput(59.6,14.47)(0.07,0.24){2}{\line(0,1){0.24}}
\multiput(59.74,14.95)(0.1,0.49){1}{\line(0,1){0.49}}
\multiput(59.83,15.44)(0.05,0.49){1}{\line(0,1){0.49}}
\multiput(59.88,15.93)(0.01,0.5){1}{\line(0,1){0.5}}
\multiput(59.85,16.93)(0.04,-0.5){1}{\line(0,-1){0.5}}
\multiput(59.77,17.42)(0.08,-0.49){1}{\line(0,-1){0.49}}
\multiput(59.64,17.9)(0.06,-0.24){2}{\line(0,-1){0.24}}
\multiput(59.47,18.36)(0.09,-0.23){2}{\line(0,-1){0.23}}
\multiput(59.25,18.81)(0.07,-0.15){3}{\line(0,-1){0.15}}
\multiput(59,19.24)(0.08,-0.14){3}{\line(0,-1){0.14}}

\linethickness{0.3mm}
\multiput(51.56,17.77)(0.15,0.08){3}{\line(1,0){0.15}}
\multiput(51.15,17.49)(0.1,0.07){4}{\line(1,0){0.1}}
\multiput(50.77,17.16)(0.09,0.08){4}{\line(1,0){0.09}}
\multiput(50.45,16.78)(0.08,0.09){4}{\line(0,1){0.09}}
\multiput(50.18,16.36)(0.09,0.14){3}{\line(0,1){0.14}}
\multiput(49.96,15.91)(0.07,0.15){3}{\line(0,1){0.15}}
\multiput(49.81,15.44)(0.08,0.24){2}{\line(0,1){0.24}}
\multiput(49.72,14.95)(0.09,0.49){1}{\line(0,1){0.49}}
\multiput(49.69,14.45)(0.03,0.5){1}{\line(0,1){0.5}}
\multiput(49.69,14.45)(0.04,-0.5){1}{\line(0,-1){0.5}}
\multiput(49.73,13.96)(0.1,-0.49){1}{\line(0,-1){0.49}}
\multiput(49.83,13.47)(0.08,-0.23){2}{\line(0,-1){0.23}}

\linethickness{0.3mm}
\multiput(59.62,21.74)(0.08,0.15){3}{\line(0,1){0.15}}
\multiput(59.86,22.19)(0.1,0.24){2}{\line(0,1){0.24}}
\multiput(60.06,22.66)(0.07,0.24){2}{\line(0,1){0.24}}
\multiput(60.2,23.15)(0.09,0.5){1}{\line(0,1){0.5}}
\multiput(60.3,23.65)(0.04,0.51){1}{\line(0,1){0.51}}
\multiput(60.31,24.67)(0.02,-0.51){1}{\line(0,-1){0.51}}
\multiput(60.24,25.17)(0.07,-0.51){1}{\line(0,-1){0.51}}
\multiput(60.12,25.67)(0.06,-0.25){2}{\line(0,-1){0.25}}
\multiput(59.94,26.15)(0.09,-0.24){2}{\line(0,-1){0.24}}
\multiput(59.71,26.6)(0.08,-0.15){3}{\line(0,-1){0.15}}
\multiput(59.44,27.03)(0.09,-0.14){3}{\line(0,-1){0.14}}
\multiput(59.12,27.43)(0.08,-0.1){4}{\line(0,-1){0.1}}
\multiput(58.76,27.79)(0.07,-0.07){5}{\line(0,-1){0.07}}
\multiput(58.36,28.11)(0.1,-0.08){4}{\line(1,0){0.1}}
\multiput(57.93,28.39)(0.14,-0.09){3}{\line(1,0){0.14}}
\multiput(57.48,28.62)(0.15,-0.08){3}{\line(1,0){0.15}}
\multiput(57,28.8)(0.24,-0.09){2}{\line(1,0){0.24}}
\multiput(56.5,28.93)(0.25,-0.06){2}{\line(1,0){0.25}}
\multiput(56,29)(0.5,-0.07){1}{\line(1,0){0.5}}

\linethickness{1mm}
\put(54,24.12){\line(0,1){14.88}}
\linethickness{0.3mm}
\multiput(51.55,28.8)(0.22,0.1){2}{\line(1,0){0.22}}
\multiput(51.13,28.54)(0.14,0.09){3}{\line(1,0){0.14}}
\multiput(50.75,28.23)(0.09,0.08){4}{\line(1,0){0.09}}
\multiput(50.42,27.87)(0.08,0.09){4}{\line(0,1){0.09}}
\multiput(50.14,27.46)(0.07,0.1){4}{\line(0,1){0.1}}
\multiput(49.91,27.02)(0.07,0.15){3}{\line(0,1){0.15}}
\multiput(49.75,26.56)(0.08,0.23){2}{\line(0,1){0.23}}
\multiput(49.65,26.08)(0.1,0.48){1}{\line(0,1){0.48}}
\multiput(49.62,25.59)(0.03,0.49){1}{\line(0,1){0.49}}
\multiput(49.62,25.59)(0.04,-0.49){1}{\line(0,-1){0.49}}
\multiput(49.66,25.09)(0.1,-0.48){1}{\line(0,-1){0.48}}
\multiput(49.76,24.61)(0.08,-0.23){2}{\line(0,-1){0.23}}
\multiput(49.93,24.15)(0.08,-0.15){3}{\line(0,-1){0.15}}
\multiput(50.16,23.71)(0.07,-0.1){4}{\line(0,-1){0.1}}
\multiput(50.44,23.31)(0.08,-0.09){4}{\line(0,-1){0.09}}
\multiput(50.78,22.95)(0.1,-0.08){4}{\line(1,0){0.1}}
\multiput(51.16,22.65)(0.14,-0.08){3}{\line(1,0){0.14}}
\multiput(51.59,22.39)(0.23,-0.1){2}{\line(1,0){0.23}}
\multiput(52.04,22.2)(0.24,-0.07){2}{\line(1,0){0.24}}
\multiput(52.51,22.07)(0.49,-0.07){1}{\line(1,0){0.49}}

\linethickness{0.3mm}
\multiput(58.72,19.65)(0.07,-0.1){4}{\line(0,-1){0.1}}
\multiput(58.4,20.04)(0.08,-0.1){4}{\line(0,-1){0.1}}
\multiput(58.06,20.4)(0.09,-0.09){4}{\line(0,-1){0.09}}
\multiput(57.68,20.73)(0.09,-0.08){4}{\line(1,0){0.09}}
\multiput(57.28,21.03)(0.1,-0.07){4}{\line(1,0){0.1}}
\multiput(56.86,21.29)(0.14,-0.09){3}{\line(1,0){0.14}}
\multiput(56.41,21.52)(0.15,-0.08){3}{\line(1,0){0.15}}
\multiput(55.95,21.71)(0.23,-0.09){2}{\line(1,0){0.23}}
\multiput(55.48,21.86)(0.24,-0.08){2}{\line(1,0){0.24}}
\multiput(54.99,21.97)(0.49,-0.11){1}{\line(1,0){0.49}}
\multiput(54.49,22.04)(0.49,-0.07){1}{\line(1,0){0.49}}
\multiput(54,22.07)(0.5,-0.03){1}{\line(1,0){0.5}}
\multiput(53.5,22.06)(0.5,0.01){1}{\line(1,0){0.5}}
\multiput(53,22)(0.5,0.06){1}{\line(1,0){0.5}}

\linethickness{0.3mm}
\multiput(55.87,18.62)(0.16,0.08){8}{\line(1,0){0.16}}
\end{picture}
\caption{Three $(0,0)$-tangles of type $\D^{(2)}$} \label{ch5KT0tangles}
\end{figure}
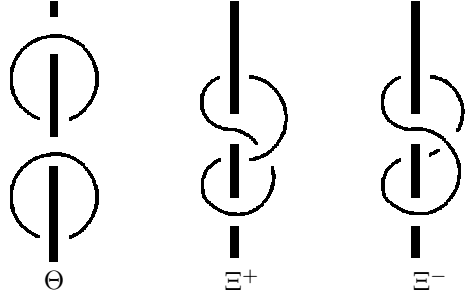

\nl The closed pole loop relations illustrate that when a tangle contains one
closed loop around the pole, all other pole twists can be moved freely between
all strands for which a segment is accessible to the pole. In particular, all
these twists can be moved to a single strand. By use of the double twist
relation, all but at most one twist can be removed.  So every tangle with
closed loops and with a pole twist can be transformed to a tangle
containing this closed loop with a twist around the pole and at most one other
strand with a pole twist.

Furthermore, as there are an even number of twists around
the pole in tangles from $\KT(\D_n)^{(2)}$,
we can assume there are two closed loops around the pole.

\np We now turn our attention to certain closed strands with pole twists which
we will use extensively. Denote by $\Theta$ the $(0,0)$-tangle of type
$\D^{(2)}$ consisting of only two separate loops each of which twists around
the pole, as shown in Figure~\ref{ch5KT0tangles}. Denote by $\Xi^+$ and
$\Xi^-$ the $(0,0)$-tangle of type $\D^{(2)}$ consisting of precisely one
closed loop with two pole twists and a positive, respectively, negative
self-intersection between these two twists.  These $(0,0)$-tangles have some
very useful properties.

\begin{Lm}\label{ch5KT0}
The $(0,0)$-tangles $\Theta$, $\Xi^{+}$, and $\Xi^{-}$ satisfy the
following relations in $\KT(\D_0)^{(2)}$.

\begin{eqnarray}
\Xi^+ - \Xi^- &=& m(\Theta-\delta) \label{ch5eQ1}, \\
(\Xi^+)^2 &=& \delta^2 - m\delta\Xi^+ + ml^{-1} \delta\Theta,
\label{ch5eQ2} \\
\Xi^+\Theta = \Theta\Xi^+ &=& \delta l^{-1}\Theta, \label{ch5eQ3} \\
\Theta^2 &=& \delta^2 \Theta. \label{ch5eQ4}
\end{eqnarray}
\end{Lm}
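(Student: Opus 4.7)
The four identities are proved in the order (\ref{ch5eQ1}), (\ref{ch5eQ4}), (\ref{ch5eQ3}), (\ref{ch5eQ2}), each invoking those already established.

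For (\ref{ch5eQ1}), apply the Kauffman skein relation (Definition~\ref{ch5main}(i)) at the self-intersection of $\Xi^+$. One smoothing splits the loop of $\Xi^+$ into two disjoint closed loops, each retaining one of the original pole twists, which is exactly $\Theta$. The other smoothing leaves a single closed loop traversing the pole twice; its two pole twists cancel by the double-twist relation of Figure~\ref{ch5doubletwist2}, leaving an isolated closed loop which the idempotent relation (Definition~\ref{ch5main}(iv)) evaluates as $\delta$. Identifying these two smoothings with the two terms of the BMW form of the skein relation produces $\Xi^+ - \Xi^- = m(\Theta - \delta)$. For (\ref{ch5eQ4}), the tangle $\Theta^2$ carries four closed pole loops stacked along the pole. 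Two successive applications of the third closed pole loop relation (Proposition~\ref{prop:addrel}(ii)), each collapsing a consecutive pair of pole loops into a single pole loop with an extra factor $\delta$, produce $\Theta^2 = \delta^2\Theta$.

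Identity (\ref{ch5eQ3}) is the principal obstacle. In $\Xi^+\Theta$, use the second pole-related self-intersection relation (Definition~\ref{ch5main}(vi)) to transport the pole-related self-intersection of $\Xi^+$ onto one of the pole loops of $\Theta$, which is accessible to the pole. Once moved, the residual strand of $\Xi^+$ simplifies through the double-twist relation to a clean closed loop contributing a factor $\delta$ via the idempotent relation. On the targeted pole loop of $\Theta$, the newly deposited self-intersection is reshaped first by the first pole-related self-intersection relation (Definition~\ref{ch5main}(v)) together with the double-twist relation into an ordinary (non-pole-related) self-intersection, which the self-intersection relation (Definition~\ref{ch5main}(iii)) then removes at the cost of $l^{-1}$. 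Reassembled with the untouched second pole loop of $\Theta$, this gives $\Xi^+\Theta = \delta l^{-1}\Theta$; the identity $\Theta\Xi^+ = \delta l^{-1}\Theta$ follows by applying the same argument upside down. The hardest step is the bookkeeping: ensuring that the successive applications of (v), (vi), and the double-twist relation produce exactly the scalar $\delta l^{-1}$ with no stray extra tangle factors.

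For (\ref{ch5eQ2}), multiply (\ref{ch5eQ1}) on the left by $\Xi^+$ to obtain $(\Xi^+)^2 = \Xi^+\Xi^- + m\Xi^+\Theta - m\delta\Xi^+$; the middle term becomes $m\delta l^{-1}\Theta$ by (\ref{ch5eQ3}), reducing the claim to $\Xi^+\Xi^- = \delta^2$. For this, transport the pole-related self-intersection of $\Xi^+$ onto the loop of $\Xi^-$ via Definition~\ref{ch5main}(vi), positioning the acquired positive self-intersection adjacent to the existing negative one. Reidemeister~II cancels that pair of opposite crossings, and the double-twist relation eliminates the remaining pole twists, so that two isolated plain closed loops remain, evaluating to $\delta\cdot\delta = \delta^2$ by the idempotent relation. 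The subtle point is arranging the transport and cancellations so that no pole twists survive, guaranteeing the final two loops are plain rather than pole loops.
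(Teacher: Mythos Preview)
Your arguments for (\ref{ch5eQ1}) and (\ref{ch5eQ4}) match the paper's. For (\ref{ch5eQ2}) you take a genuinely different route: the paper first rewrites $(\Xi^+)^2$ as $\delta$ times a single closed loop carrying two positive pole-related self-intersections (Figure~\ref{ch5xixi1}) and then applies the Kauffman skein relation once to that loop; you instead multiply (\ref{ch5eQ1}) by $\Xi^+$, invoke (\ref{ch5eQ3}), and reduce to the identity $\Xi^+\Xi^- = \delta^2$. This reduction is valid, and your tangle argument for $\Xi^+\Xi^- = \delta^2$ does go through: after moving the positive crossing down via two applications of (v) (through $\Xi^+$'s lower twist and onto $\Xi^-$'s upper twist), the surviving loop has structure $[\text{twist}][+][-][\text{twist}]$, where the inner and outer arcs cross with opposite signs at adjacent points, so Reidemeister~II applies; the double-twist relation then finishes. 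Both approaches end up doing the same ``merge two loops into one'' step; yours trades one Kauffman-skein expansion for one Reidemeister~II, which is a wash.

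There is a muddled step in your treatment of (\ref{ch5eQ3}). First, (vi) is designed for target segments \emph{not} already twisting around the pole; since $\Theta$'s loops do twist, the relevant move is (v), which is what the paper uses. More importantly, your claim that the deposited self-intersection is ``reshaped by (v) together with the double-twist relation into an ordinary self-intersection'' is not right: after one application of (v), the targeted $\Theta$-loop carries exactly one twist and one self-crossing, and there is no second twist on that loop to feed into the double-twist relation. The actual reason (iii) applies is purely topological: on a \emph{closed} loop with a single pole twist and a single self-crossing, the crossing divides the loop into two arcs, exactly one of which avoids the pole; that pole-free arc is a Reidemeister~I disc, so the self-intersection relation (iii) removes the crossing at the cost of $l^{-1}$. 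This is the step the paper leaves implicit when it says ``moving the pole-related self-intersection to the nearest loop.'' Once you replace your reshaping sentence with this observation, the argument is complete.
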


\begin{proof}
\nl Applying the Kauffman skein relation to the single crossing in
$\Xi^+$ gives $\Xi^+ + m\delta = \Xi^- + m\Theta$ and
$(\ref{ch5eQ1})$ follows from this, showing that $\Xi^-$ can be
expressed as a linear combination of $\Xi^+$ and $\Theta$.

\nl Observe that $(\Xi^+)^2$ is equal to $\delta$ times the
$(0,0)$-tangle containing one closed loop around the pole with two
pole-related self-intersections, as shown in Figure~\ref{ch5xixi1}. Property
$(\ref{ch5eQ2})$ is a direct result of applying the Kauffman
skein relation to one of the two pole-related self-intersections.

\begin{figure}[htbp]
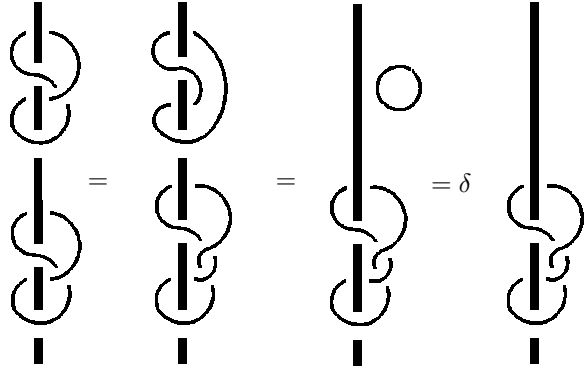

\unitlength 0.8mm
% [inline block 5: 1 envs, 40749 chars -> data_tex | \begin{picture}(100.5,60)(0,0) \linethickness{1mm}...]

\caption{Equating $(\Xi^+)^2$  to $\delta$ times a closed loop with
two self-intersections} \label{ch5xixi1}
\end{figure}

Now $(\ref{ch5eQ3})$ follows from applying the first pole-related
sef-intersection relation to $\Xi^+\Theta$, moving the pole-related
self-crossing to the nearest loop.

Also, $(\ref{ch5eQ4})$ is obtained by use of the third pole-related
self-intesection relations to move two twists to the same loop, resulting in
two loops without pole twists to give $\delta^2\Theta$.
\end{proof}

\np The tangle $\Xi^-$ is not the inverse of $\Xi^+$.  However, the first
pole-related self-intersection relation can be used to verify that the tangle
$\delta^{-2}\Xi^-$ is the inverse of $\Xi^+$.

\begin{Lm}\label{ch5lmcommute}\label{xicommutes}
The tangles $\Theta$, $\Xi^{+}$ commute with every tangle
of type $\D^{(1)}$.
In particular, $\KT(\D_n)^{(1)}$ can be viewed as an
algebra over $R[\Theta, \Xi^+]$.
\end{Lm}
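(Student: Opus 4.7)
The plan is to show $\Theta T = T\Theta$ and $\Xi^+ T = T\Xi^+$ for every tangle $T$ of type $\D^{(1)}$ in $\KT(\D_n)^{(1)}$. Once centrality is in hand, the ``in particular'' statement follows at once, since Lemma~\ref{ch5KT0} shows $\Theta$ and $\Xi^+$ are themselves $(0,0)$-tangles that commute (their relations in $\KT(\D_0)^{(2)}$ generate the subring), so $R[\Theta,\Xi^+]$ lies inside the center of $\KT(\D_n)^{(1)}$ and $\KT(\D_n)^{(1)}$ becomes a module over it.

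For $\Theta$, I would decompose $\Theta = \theta\cdot\theta'$, where $\theta$ and $\theta'$ are disjoint closed loops each winding once around the pole, and reduce to proving that a single such loop $\theta$ is central. Picturing $\theta T$ with $\theta$ confined to a thin horizontal slab at the top, I would push this slab downward through $T$ level by level. At each level, $\theta$ meets finitely many strand segments of $T$: a segment that does not interact with the pole is passed using the second closed pole loop relation of Proposition~\ref{prop:addrel}(i), while a segment that does twist around the pole is passed using the first closed pole loop relation, relation (vii) of Definition~\ref{ch5main}. Iterating through every level of $T$, the loop $\theta$ ends up below $T$, proving $\theta T = T\theta$, hence $\Theta T = T\Theta$.

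For $\Xi^+$ the plan is cleaner thanks to Remark~\ref{rmk:onetwist}. By the pole-related self-intersection relations (v) and (vi) of Definition~\ref{ch5main}, the pole-related self-intersection inside $\Xi^+$ can be moved onto any strand of the ambient tangle whose segment is accessible to the pole. Applying this in the product $\Xi^+ T$, I would transfer the self-intersection of $\Xi^+$ onto a strand of $T$; what remains of $\Xi^+$ is a closed loop carrying two consecutive pole twists and no crossings, which by the double-twist relation (Figure~\ref{ch5doubletwist}) collapses to an isolated closed loop contributing the factor $\delta$. Thus $\Xi^+ T = \delta\cdot T_1$, where $T_1$ is $T$ with one extra pole-related self-intersection sitting on some strand. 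Running the same procedure on $T\Xi^+$ yields $\delta\cdot T_2$. Finally, Remark~\ref{rmk:onetwist} asserts that pole-related self-intersections can be moved freely among strands, so $T_1 = T_2$ and therefore $\Xi^+ T = T\Xi^+$.

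The hard part will be justifying the elementary slide moves in the $\Theta$ case rigorously: the closed pole loop relations as stated depict very specific local configurations, so the generic situation of a closed loop passing an arbitrary strand segment must be reduced to these stylised pictures, probably with auxiliary uses of Reidemeister~II and the double-twist relation to stage each crossing in standard form. The $\Xi^+$ case, by contrast, is essentially a direct corollary once the mobility of the pole-related self-intersection from Remark~\ref{rmk:onetwist} is invoked, and serves mainly as bookkeeping.
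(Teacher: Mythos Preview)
Your overall architecture is sound, but there are two issues worth flagging.

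For $\Theta$, you have the relations swapped. Passing a single closed pole loop $\theta$ past a strand segment that does \emph{not} interact with the pole requires nothing more than Reidemeister~II: the closed loop is a closed curve entirely to the right of the pole except at its one twist, so it can be slid over or under any ordinary strand segment. Conversely, passing $\theta$ past a pole twist of another strand is exactly what Proposition~\ref{prop:addrel}(i) (the second closed pole loop relation) provides. Relation~(vii) of Definition~\ref{ch5main} does something else: in the presence of a closed pole loop it moves a pole twist from one strand to another; it does not by itself slide the closed loop past the twist. So your push-through plan works, but with the attribution corrected: Reidemeister~II for ordinary segments, Proposition~\ref{prop:addrel}(i) for pole twists.

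For $\Xi^+$, your global approach is valid but hides the work in the final step. Having obtained $\Xi^+ T = \delta T_1$ and $T\Xi^+ = \delta T_2$, the claim $T_1 = T_2$ amounts to moving a pole-related self-intersection from the top of $T$ all the way to the bottom. That requires iterating relations~(v) and~(vi) to pass the self-intersection past each intervening pole twist of $T$, which is exactly the local problem the paper isolates. The paper's route is more economical: it first observes that $\Theta$ and $\Xi^+$ obviously commute with any tangle having no pole twists (closed strands move freely by Reidemeister~II), reducing everything to commuting past a \emph{single} pole twist. For $\Xi^+$ this single case is then handled directly, as in Figure~\ref{ch5commutexi}: move the self-intersection onto the twisting strand, collapse the now twist-free loop, slide it across by Reidemeister~II, and reverse the procedure. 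This avoids absorbing the loop into $T$ and the subsequent bookkeeping of $T_1$ versus $T_2$. Your version and the paper's are logically equivalent, but the paper's reduction to a single pole twist is the cleaner organising principle and would streamline your $\Theta$ argument as well.
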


\begin{proof}
Both tangles obviously commute with any tangle of type $\D^{(1)}$ which
contains no pole twists. If we can show that both
tangles commute with every twist around the pole we are done.

\nl For a closed loop with a twist around the pole this holds by
Proposition~\ref{prop:addrel} (i), the second closed pole loop relation. Hence,
$\Theta$ commutes with every tangle of type $\D^{(1)}$.

\begin{figure}[hbtp]
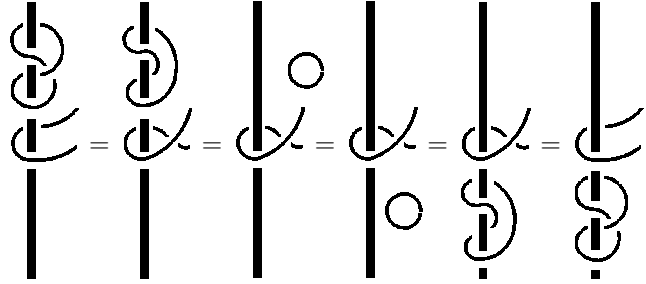

\unitlength 0.6mm
% [inline block 6: 1 envs, 49695 chars -> data_tex | \begin{picture}(150.87,62.12)(0,0) \linethickness{1mm}...]

\caption{Commuting $\Xi^+$ with another
twist around the pole} \label{ch5commutexi}
\end{figure}

It remains to prove that $\Xi^+$ commutes with every pole twist. We illustrate
the argument in Figure~\ref{ch5commutexi}.  Using the first pole-related
self-intersection relation, we move the pole-related self-intersection of
$\Xi^+$ to the other strands twisting around the pole. The two pole twists of
the closed loop without self-intersection can now be removed as the pole has
order two. As a closed loop can move freely through the tangle by Reidemeister
move II, we can move it to the other side of the twist. Finally, apply the
reverse of the procedure just described to bring the pole-related
self-intersection back in the closed loop.
\end{proof}

\np
The importance of the lemma is to allow us to isolate closed strands from
the tangle by commuting any $\Xi^{\pm}$ or $\Theta$ away from other
parts of the tangle. 

Also $\KT(\D_n)^{(2)}$ is an algebra over $R[\Xi^+,\Theta]$.  But $\KT(\D_n)$
is not, as the multiple of the identity element by $\Xi^+$ does not belong to
it (see Definitions \ref{df:tangletype} and \ref{df:KTDn}).

\np
Our goal is to establish that $\KT(\D_n)$ is the image of an algebra
homomorphism $\phi: \BMW(\D_n)\to \KT(\D_n)^{(2)}$ for $n\ge 2$.
To set up the homomorphism $\phi$ 
the generators of $\BMW(\D_n)$ are to be mapped
onto simple tangles which contain at most one crossing.
We introduce
$2n$ of these simple tangles, which we denote by $G_i$ and $E_i$ for $i = 1 ,
\ldots ,n$.

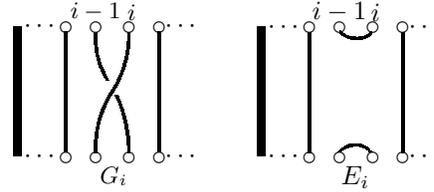
\begin{figure}[htbp]
\unitlength 0.4mm
\begin{picture}(140,60)(0,0)
\linethickness{1mm}
\put(6,7){\line(0,1){43.06}}
\put(60.97,50.19){\makebox(0,0)[cc]{$\cdots$}}
\put(60.97,7.07){\makebox(0,0)[cc]{$\cdots$}}
\put(14.1,7.07){\makebox(0,0)[cc]{$\cdots$}}
\put(14.1,50.19){\makebox(0,0)[cc]{$\cdots$}}

\linethickness{0.3mm}
\put(53.03,8.39){\line(0,1){40}}
\put(22.03,8.39){\line(0,1){40}}
\put(53.03,49.97){\circle{3.16}}
\put(43.03,49.97){\circle{3.16}}
\put(32.03,49.97){\circle{3.16}}
\put(22.03,49.97){\circle{3.16}}
\put(53.03,6.81){\circle{3.16}}

\put(43.03,6.81){\circle{3.16}}
\put(32.03,6.81){\circle{3.16}}
\put(22.03,6.81){\circle{3.16}}
\put(32.03,55){\makebox(0,0)[cc]{\small $i-1$}}
\put(44.03,55){\makebox(0,0)[cc]{\small $i$}}
\put(38.03,0.39){\makebox(0,0)[cc]{\small $G_i$}}

\multiput(42.97,8.19)(0,0.5){1}{\line(0,1){0.5}}
\multiput(42.97,9.19)(0,-0.5){1}{\line(0,-1){0.5}}
\multiput(42.97,9.7)(0.01,-0.5){1}{\line(0,-1){0.5}}
\multiput(42.95,10.2)(0.01,-0.5){1}{\line(0,-1){0.5}}
\multiput(42.94,10.7)(0.02,-0.5){1}{\line(0,-1){0.5}}
\multiput(42.91,11.2)(0.02,-0.5){1}{\line(0,-1){0.5}}
\multiput(42.88,11.7)(0.03,-0.5){1}{\line(0,-1){0.5}}
\multiput(42.85,12.2)(0.04,-0.5){1}{\line(0,-1){0.5}}
\multiput(42.8,12.7)(0.04,-0.5){1}{\line(0,-1){0.5}}
\multiput(42.76,13.2)(0.05,-0.5){1}{\line(0,-1){0.5}}
\multiput(42.7,13.7)(0.05,-0.5){1}{\line(0,-1){0.5}}
\multiput(42.65,14.2)(0.06,-0.5){1}{\line(0,-1){0.5}}
\multiput(42.58,14.7)(0.06,-0.5){1}{\line(0,-1){0.5}}
\multiput(42.51,15.2)(0.07,-0.5){1}{\line(0,-1){0.5}}
\multiput(42.44,15.69)(0.08,-0.5){1}{\line(0,-1){0.5}}
\multiput(42.36,16.19)(0.08,-0.5){1}{\line(0,-1){0.5}}
\multiput(42.27,16.68)(0.09,-0.49){1}{\line(0,-1){0.49}}
\multiput(42.18,17.17)(0.09,-0.49){1}{\line(0,-1){0.49}}
\multiput(42.08,17.67)(0.1,-0.49){1}{\line(0,-1){0.49}}
\multiput(41.98,18.16)(0.1,-0.49){1}{\line(0,-1){0.49}}
\multiput(41.87,18.65)(0.11,-0.49){1}{\line(0,-1){0.49}}
\multiput(41.75,19.14)(0.11,-0.49){1}{\line(0,-1){0.49}}
\multiput(41.63,19.62)(0.06,-0.24){2}{\line(0,-1){0.24}}
\multiput(41.51,20.11)(0.06,-0.24){2}{\line(0,-1){0.24}}
\multiput(41.38,20.6)(0.07,-0.24){2}{\line(0,-1){0.24}}
\multiput(41.24,21.08)(0.07,-0.24){2}{\line(0,-1){0.24}}
\multiput(41.1,21.56)(0.07,-0.24){2}{\line(0,-1){0.24}}
\multiput(40.95,22.04)(0.07,-0.24){2}{\line(0,-1){0.24}}
\multiput(40.8,22.52)(0.08,-0.24){2}{\line(0,-1){0.24}}
\multiput(40.64,22.99)(0.08,-0.24){2}{\line(0,-1){0.24}}
\multiput(40.47,23.47)(0.08,-0.24){2}{\line(0,-1){0.24}}
\multiput(40.3,23.94)(0.08,-0.24){2}{\line(0,-1){0.24}}
\multiput(40.13,24.41)(0.09,-0.24){2}{\line(0,-1){0.24}}
\multiput(39.95,24.88)(0.09,-0.23){2}{\line(0,-1){0.23}}
\multiput(39.76,25.35)(0.09,-0.23){2}{\line(0,-1){0.23}}
\multiput(39.57,25.81)(0.1,-0.23){2}{\line(0,-1){0.23}}
\multiput(39.38,26.27)(0.1,-0.23){2}{\line(0,-1){0.23}}
\multiput(39.18,26.73)(0.07,-0.15){3}{\line(0,-1){0.15}}
\multiput(38.97,27.19)(0.07,-0.15){3}{\line(0,-1){0.15}}

\linethickness{0.3mm}
\multiput(31.95,47.68)(0.02,0.51){1}{\line(0,1){0.51}}
\multiput(31.93,47.18)(0.01,0.51){1}{\line(0,1){0.51}}
\multiput(31.93,46.67)(0,0.51){1}{\line(0,1){0.51}}
\multiput(31.93,46.67)(0,-0.51){1}{\line(0,-1){0.51}}
\multiput(31.93,46.16)(0.01,-0.51){1}{\line(0,-1){0.51}}
\multiput(31.95,45.65)(0.02,-0.51){1}{\line(0,-1){0.51}}
\multiput(31.97,45.15)(0.03,-0.51){1}{\line(0,-1){0.51}}
\multiput(32,44.64)(0.04,-0.51){1}{\line(0,-1){0.51}}
\multiput(32.05,44.14)(0.05,-0.5){1}{\line(0,-1){0.5}}
\multiput(32.1,43.63)(0.06,-0.5){1}{\line(0,-1){0.5}}
\multiput(32.16,43.13)(0.07,-0.5){1}{\line(0,-1){0.5}}
\multiput(32.23,42.63)(0.08,-0.5){1}{\line(0,-1){0.5}}
\multiput(32.3,42.12)(0.09,-0.5){1}{\line(0,-1){0.5}}
\multiput(32.39,41.62)(0.1,-0.5){1}{\line(0,-1){0.5}}
\multiput(32.49,41.13)(0.11,-0.5){1}{\line(0,-1){0.5}}
\multiput(32.59,40.63)(0.11,-0.49){1}{\line(0,-1){0.49}}
\multiput(32.71,40.14)(0.06,-0.25){2}{\line(0,-1){0.25}}
\multiput(32.83,39.64)(0.07,-0.24){2}{\line(0,-1){0.24}}
\multiput(32.96,39.15)(0.07,-0.24){2}{\line(0,-1){0.24}}
\multiput(33.1,38.67)(0.07,-0.24){2}{\line(0,-1){0.24}}
\multiput(33.25,38.18)(0.08,-0.24){2}{\line(0,-1){0.24}}
\multiput(33.41,37.7)(0.08,-0.24){2}{\line(0,-1){0.24}}
\multiput(33.58,37.22)(0.09,-0.24){2}{\line(0,-1){0.24}}
\multiput(33.75,36.74)(0.09,-0.24){2}{\line(0,-1){0.24}}
\multiput(33.94,36.27)(0.1,-0.23){2}{\line(0,-1){0.23}}
\multiput(34.13,35.8)(0.07,-0.16){3}{\line(0,-1){0.16}}
\multiput(34.33,35.34)(0.07,-0.15){3}{\line(0,-1){0.15}}
\multiput(34.54,34.88)(0.07,-0.15){3}{\line(0,-1){0.15}}
\multiput(34.76,34.42)(0.08,-0.15){3}{\line(0,-1){0.15}}
\multiput(34.98,33.96)(0.08,-0.15){3}{\line(0,-1){0.15}}
\multiput(35.22,33.51)(0.08,-0.15){3}{\line(0,-1){0.15}}
\multiput(35.46,33.07)(0.08,-0.15){3}{\line(0,-1){0.15}}
\multiput(35.71,32.63)(0.09,-0.15){3}{\line(0,-1){0.15}}

\linethickness{0.3mm}
\multiput(37.97,29.19)(0.08,0.14){3}{\line(0,1){0.14}}
\multiput(38.22,29.62)(0.08,0.14){3}{\line(0,1){0.14}}
\multiput(38.47,30.05)(0.08,0.15){3}{\line(0,1){0.15}}
\multiput(38.71,30.48)(0.08,0.15){3}{\line(0,1){0.15}}
\multiput(38.95,30.92)(0.08,0.15){3}{\line(0,1){0.15}}
\multiput(39.18,31.37)(0.07,0.15){3}{\line(0,1){0.15}}
\multiput(39.4,31.81)(0.07,0.15){3}{\line(0,1){0.15}}
\multiput(39.62,32.26)(0.07,0.15){3}{\line(0,1){0.15}}
\multiput(39.83,32.71)(0.07,0.15){3}{\line(0,1){0.15}}
\multiput(40.03,33.16)(0.1,0.23){2}{\line(0,1){0.23}}
\multiput(40.23,33.62)(0.1,0.23){2}{\line(0,1){0.23}}
\multiput(40.42,34.08)(0.09,0.23){2}{\line(0,1){0.23}}
\multiput(40.61,34.54)(0.09,0.23){2}{\line(0,1){0.23}}
\multiput(40.78,35.01)(0.09,0.23){2}{\line(0,1){0.23}}
\multiput(40.95,35.48)(0.08,0.24){2}{\line(0,1){0.24}}
\multiput(41.12,35.95)(0.08,0.24){2}{\line(0,1){0.24}}
\multiput(41.28,36.42)(0.08,0.24){2}{\line(0,1){0.24}}
\multiput(41.43,36.89)(0.07,0.24){2}{\line(0,1){0.24}}
\multiput(41.57,37.37)(0.07,0.24){2}{\line(0,1){0.24}}
\multiput(41.71,37.85)(0.07,0.24){2}{\line(0,1){0.24}}
\multiput(41.84,38.33)(0.06,0.24){2}{\line(0,1){0.24}}
\multiput(41.96,38.81)(0.12,0.48){1}{\line(0,1){0.48}}
\multiput(42.08,39.3)(0.11,0.49){1}{\line(0,1){0.49}}
\multiput(42.19,39.78)(0.1,0.49){1}{\line(0,1){0.49}}
\multiput(42.29,40.27)(0.1,0.49){1}{\line(0,1){0.49}}
\multiput(42.39,40.76)(0.09,0.49){1}{\line(0,1){0.49}}
\multiput(42.47,41.25)(0.08,0.49){1}{\line(0,1){0.49}}
\multiput(42.56,41.74)(0.07,0.49){1}{\line(0,1){0.49}}
\multiput(42.63,42.23)(0.07,0.49){1}{\line(0,1){0.49}}
\multiput(42.7,42.73)(0.06,0.49){1}{\line(0,1){0.49}}
\multiput(42.76,43.22)(0.05,0.5){1}{\line(0,1){0.5}}
\multiput(42.81,43.71)(0.05,0.5){1}{\line(0,1){0.5}}
\multiput(42.86,44.21)(0.04,0.5){1}{\line(0,1){0.5}}
\multiput(42.9,44.71)(0.03,0.5){1}{\line(0,1){0.5}}
\multiput(42.93,45.2)(0.02,0.5){1}{\line(0,1){0.5}}
\multiput(42.95,45.7)(0.02,0.5){1}{\line(0,1){0.5}}
\multiput(42.97,46.2)(0.01,0.5){1}{\line(0,1){0.5}}
\multiput(42.98,46.7)(0,0.5){1}{\line(0,1){0.5}}
\multiput(42.98,47.69)(0,-0.5){1}{\line(0,-1){0.5}}
\multiput(42.97,48.19)(0.01,-0.5){1}{\line(0,-1){0.5}}

\linethickness{0.3mm}
\multiput(37.69,28.77)(0.07,0.1){4}{\line(0,1){0.1}}
\multiput(37.41,28.35)(0.09,0.14){3}{\line(0,1){0.14}}
\multiput(37.14,27.92)(0.09,0.14){3}{\line(0,1){0.14}}
\multiput(36.88,27.49)(0.09,0.14){3}{\line(0,1){0.14}}
\multiput(36.62,27.06)(0.09,0.14){3}{\line(0,1){0.14}}
\multiput(36.37,26.62)(0.08,0.15){3}{\line(0,1){0.15}}
\multiput(36.13,26.18)(0.08,0.15){3}{\line(0,1){0.15}}
\multiput(35.89,25.73)(0.08,0.15){3}{\line(0,1){0.15}}
\multiput(35.66,25.28)(0.08,0.15){3}{\line(0,1){0.15}}
\multiput(35.43,24.83)(0.07,0.15){3}{\line(0,1){0.15}}
\multiput(35.21,24.37)(0.07,0.15){3}{\line(0,1){0.15}}
\multiput(35,23.91)(0.07,0.15){3}{\line(0,1){0.15}}
\multiput(34.8,23.45)(0.07,0.15){3}{\line(0,1){0.15}}
\multiput(34.6,22.99)(0.1,0.23){2}{\line(0,1){0.23}}
\multiput(34.41,22.52)(0.1,0.23){2}{\line(0,1){0.23}}
\multiput(34.23,22.05)(0.09,0.24){2}{\line(0,1){0.24}}
\multiput(34.05,21.58)(0.09,0.24){2}{\line(0,1){0.24}}
\multiput(33.88,21.1)(0.09,0.24){2}{\line(0,1){0.24}}
\multiput(33.71,20.62)(0.08,0.24){2}{\line(0,1){0.24}}
\multiput(33.56,20.14)(0.08,0.24){2}{\line(0,1){0.24}}
\multiput(33.41,19.66)(0.07,0.24){2}{\line(0,1){0.24}}
\multiput(33.27,19.18)(0.07,0.24){2}{\line(0,1){0.24}}
\multiput(33.13,18.69)(0.07,0.24){2}{\line(0,1){0.24}}
\multiput(33,18.2)(0.06,0.24){2}{\line(0,1){0.24}}
\multiput(32.88,17.71)(0.06,0.25){2}{\line(0,1){0.25}}
\multiput(32.77,17.22)(0.11,0.49){1}{\line(0,1){0.49}}
\multiput(32.66,16.73)(0.11,0.49){1}{\line(0,1){0.49}}
\multiput(32.56,16.23)(0.1,0.5){1}{\line(0,1){0.5}}
\multiput(32.47,15.73)(0.09,0.5){1}{\line(0,1){0.5}}
\multiput(32.39,15.24)(0.08,0.5){1}{\line(0,1){0.5}}
\multiput(32.31,14.74)(0.08,0.5){1}{\line(0,1){0.5}}
\multiput(32.24,14.24)(0.07,0.5){1}{\line(0,1){0.5}}
\multiput(32.18,13.74)(0.06,0.5){1}{\line(0,1){0.5}}
\multiput(32.12,13.23)(0.06,0.5){1}{\line(0,1){0.5}}
\multiput(32.07,12.73)(0.05,0.5){1}{\line(0,1){0.5}}
\multiput(32.03,12.23)(0.04,0.5){1}{\line(0,1){0.5}}
\multiput(32,11.72)(0.03,0.5){1}{\line(0,1){0.5}}
\multiput(31.97,11.22)(0.03,0.5){1}{\line(0,1){0.5}}
\multiput(31.95,10.71)(0.02,0.5){1}{\line(0,1){0.5}}
\multiput(31.94,10.21)(0.01,0.5){1}{\line(0,1){0.5}}
\multiput(31.94,9.7)(0,0.51){1}{\line(0,1){0.51}}
\multiput(31.94,9.7)(0,-0.51){1}{\line(0,-1){0.51}}
\multiput(31.94,9.2)(0.01,-0.5){1}{\line(0,-1){0.5}}
\multiput(31.95,8.69)(0.02,-0.5){1}{\line(0,-1){0.5}}

\linethickness{1mm}
\put(87,6.94){\line(0,1){43.06}}
\linethickness{0.3mm}
\multiput(113.28,48.16)(0.1,-0.08){4}{\line(1,0){0.1}}
\multiput(113.67,47.84)(0.1,-0.07){4}{\line(1,0){0.1}}
\multiput(114.07,47.54)(0.14,-0.09){3}{\line(1,0){0.14}}
\multiput(114.5,47.26)(0.15,-0.08){3}{\line(1,0){0.15}}
\multiput(114.94,47.02)(0.15,-0.07){3}{\line(1,0){0.15}}
\multiput(115.39,46.8)(0.23,-0.09){2}{\line(1,0){0.23}}
\multiput(115.86,46.61)(0.24,-0.08){2}{\line(1,0){0.24}}
\multiput(116.34,46.45)(0.24,-0.06){2}{\line(1,0){0.24}}
\multiput(116.83,46.33)(0.5,-0.1){1}{\line(1,0){0.5}}
\multiput(117.32,46.23)(0.5,-0.06){1}{\line(1,0){0.5}}
\multiput(117.82,46.17)(0.5,-0.03){1}{\line(1,0){0.5}}
\put(118.33,46.13){\line(1,0){0.5}}
\multiput(118.83,46.13)(0.5,0.03){1}{\line(1,0){0.5}}
\multiput(119.34,46.17)(0.5,0.06){1}{\line(1,0){0.5}}
\multiput(119.84,46.23)(0.5,0.1){1}{\line(1,0){0.5}}
\multiput(120.33,46.33)(0.24,0.06){2}{\line(1,0){0.24}}
\multiput(120.82,46.45)(0.24,0.08){2}{\line(1,0){0.24}}
\multiput(121.3,46.61)(0.23,0.09){2}{\line(1,0){0.23}}
\multiput(121.77,46.8)(0.15,0.07){3}{\line(1,0){0.15}}
\multiput(122.22,47.02)(0.15,0.08){3}{\line(1,0){0.15}}
\multiput(122.66,47.26)(0.14,0.09){3}{\line(1,0){0.14}}
\multiput(123.09,47.54)(0.1,0.07){4}{\line(1,0){0.1}}
\multiput(123.49,47.84)(0.1,0.08){4}{\line(1,0){0.1}}

\linethickness{0.3mm}
\multiput(123.22,8.76)(0.09,-0.09){4}{\line(0,-1){0.09}}
\multiput(122.85,9.09)(0.09,-0.08){4}{\line(1,0){0.09}}
\multiput(122.46,9.4)(0.1,-0.08){4}{\line(1,0){0.1}}
\multiput(122.04,9.67)(0.14,-0.09){3}{\line(1,0){0.14}}
\multiput(121.61,9.92)(0.14,-0.08){3}{\line(1,0){0.14}}
\multiput(121.16,10.14)(0.15,-0.07){3}{\line(1,0){0.15}}
\multiput(120.7,10.33)(0.23,-0.09){2}{\line(1,0){0.23}}
\multiput(120.22,10.48)(0.24,-0.08){2}{\line(1,0){0.24}}
\multiput(119.74,10.6)(0.24,-0.06){2}{\line(1,0){0.24}}
\multiput(119.25,10.69)(0.49,-0.09){1}{\line(1,0){0.49}}
\multiput(118.75,10.74)(0.5,-0.05){1}{\line(1,0){0.5}}
\multiput(118.26,10.76)(0.5,-0.02){1}{\line(1,0){0.5}}
\multiput(117.76,10.74)(0.5,0.02){1}{\line(1,0){0.5}}
\multiput(117.26,10.69)(0.5,0.05){1}{\line(1,0){0.5}}
\multiput(116.77,10.6)(0.49,0.09){1}{\line(1,0){0.49}}
\multiput(116.29,10.48)(0.24,0.06){2}{\line(1,0){0.24}}
\multiput(115.81,10.33)(0.24,0.08){2}{\line(1,0){0.24}}
\multiput(115.35,10.14)(0.23,0.09){2}{\line(1,0){0.23}}
\multiput(114.9,9.92)(0.15,0.07){3}{\line(1,0){0.15}}
\multiput(114.47,9.67)(0.14,0.08){3}{\line(1,0){0.14}}
\multiput(114.05,9.4)(0.14,0.09){3}{\line(1,0){0.14}}
\multiput(113.66,9.09)(0.1,0.08){4}{\line(1,0){0.1}}
\multiput(113.29,8.76)(0.09,0.08){4}{\line(1,0){0.09}}
\multiput(112.94,8.4)(0.09,0.09){4}{\line(0,1){0.09}}

\linethickness{0.3mm}
\put(134.06,8.24){\line(0,1){40}}
\linethickness{0.3mm}
\put(103.06,8.24){\line(0,1){40}}
\linethickness{0.3mm}
\put(134.06,49.82){\circle{3.16}}

\linethickness{0.3mm}
\put(124.06,49.82){\circle{3.16}}

\linethickness{0.3mm}
\put(113.06,49.82){\circle{3.16}}

\linethickness{0.3mm}
\put(103.06,49.82){\circle{3.16}}

\linethickness{0.3mm}
\put(134.06,6.66){\circle{3.16}}

\linethickness{0.3mm}
\put(124.06,6.66){\circle{3.16}}

\linethickness{0.3mm}
\put(113.06,6.66){\circle{3.16}}

\linethickness{0.3mm}
\put(103.06,6.66){\circle{3.16}}

\put(113.06,55){\makebox(0,0)[cc]{$i-1$}}

\put(125.06,55){\makebox(0,0)[cc]{$i$}}

\put(118.26,0.05){\makebox(0,0)[cc]{$E_i$}}

\put(142,50.04){\makebox(0,0)[cc]{$\cdots$}}

\put(142,6.92){\makebox(0,0)[cc]{$\cdots$}}

\put(95.13,6.92){\makebox(0,0)[cc]{$\cdots$}}

\put(95.13,50.04){\makebox(0,0)[cc]{$\cdots$}}

\end{picture}
\caption{The tangles $G_i$ and $E_i$ for $i = 2,\ldots,n$}\label{picsGiEi}
\end{figure}

\nl For $i \ne 1$ we define the $G_i$ to be just the simple
tangles where the $(i-1)$-st and $i$-th node are connected by two
strands with a positive crossing. All other nodes are connected by
straight lines without crossings. These tangles do not have any
pole twists, see the left side of Figure \ref{picsGiEi}.

\nl The tangle $E_i$, where $1< i \le n$, connects the $(i-1)$-st
and $i$-th node by horizontal strands. All other nodes are again
connected by straight lines without crossings. These tangles have
no pole twists, see the right side Figure \ref{picsGiEi}.

\nl The two tangles $G_1$ and $E_1$ are tangles with pole twists. The tangle
$G_1$ is obtained from $G_2$ in a natural way by twisting the two strands
connecting the first and second node around the pole. Similarly, the tangle
$E_1$ is obtained from $E_2$ by twisting the two strands connecting the first
and second node around the pole, see Figure \ref{ch5G0E0}.

\begin{figure}[htbp]
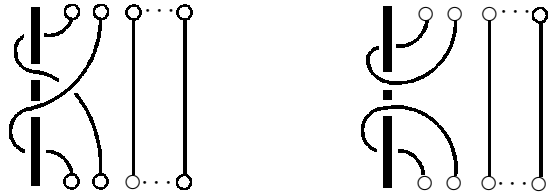

\unitlength 0.6mm
% [inline block 7: 1 envs, 29929 chars -> data_tex | \begin{picture}(118.32,50.85)(0,0) \linethickness{0.3mm}...]

\caption{The additional generators $G_1$ and $E_1$}
\label{ch5G0E0}
\end{figure}

\begin{Remarks}\label{def:Gi}
\rm (i). The $G_i$ and $E_i$ for $i \ge 2$ generate a subalgebra of
$\KT(\D_n)$ that is isomorphic with $\KT(\A_n)$, the Kauffman Tangle algebra
as defined by \cite{MorWas}. The isomorphism is simply defined by removing or
adding the pole, which does not affect the tangles in the subalgebra generated
by $G_i$ and $E_i$ for $i \ge 2$.

\nl (ii). Define $G_0$ to be the tangle in Figure~\ref{ch5G0}.
It is of type $\D^{(1)}$ but not of type $\D$.

\begin{figure}[hbtp]
\unitlength 0.6mm
\begin{picture}(39.57,46.03)(0,0)
\linethickness{0.3mm}
\multiput(7.52,27.57)(0.15,0.08){3}{\line(1,0){0.15}}
\multiput(7.98,27.8)(0.15,0.08){3}{\line(1,0){0.15}}
\multiput(8.42,28.05)(0.14,0.09){3}{\line(1,0){0.14}}
\multiput(8.85,28.32)(0.1,0.07){4}{\line(1,0){0.1}}
\multiput(9.26,28.62)(0.1,0.08){4}{\line(1,0){0.1}}
\multiput(9.66,28.93)(0.1,0.08){4}{\line(1,0){0.1}}
\multiput(10.04,29.27)(0.09,0.09){4}{\line(1,0){0.09}}
\multiput(10.41,29.63)(0.09,0.09){4}{\line(0,1){0.09}}
\multiput(10.75,30)(0.08,0.1){4}{\line(0,1){0.1}}
\multiput(11.08,30.39)(0.08,0.1){4}{\line(0,1){0.1}}
\multiput(11.38,30.8)(0.07,0.11){4}{\line(0,1){0.11}}
\multiput(11.66,31.23)(0.09,0.15){3}{\line(0,1){0.15}}
\multiput(11.92,31.67)(0.08,0.15){3}{\line(0,1){0.15}}
\multiput(12.15,32.12)(0.07,0.15){3}{\line(0,1){0.15}}
\multiput(12.36,32.58)(0.09,0.24){2}{\line(0,1){0.24}}
\multiput(12.55,33.06)(0.08,0.24){2}{\line(0,1){0.24}}
\multiput(12.71,33.54)(0.07,0.25){2}{\line(0,1){0.25}}
\multiput(12.84,34.03)(0.11,0.5){1}{\line(0,1){0.5}}
\multiput(12.95,34.53)(0.08,0.5){1}{\line(0,1){0.5}}
\multiput(13.03,35.03)(0.06,0.51){1}{\line(0,1){0.51}}
\multiput(13.09,35.54)(0.03,0.51){1}{\line(0,1){0.51}}
\multiput(13.12,36.04)(0,0.51){1}{\line(0,1){0.51}}
\multiput(13.1,37.06)(0.02,-0.51){1}{\line(0,-1){0.51}}

\linethickness{0.3mm}
\multiput(12.97,2.47)(0.03,0.5){1}{\line(0,1){0.5}}
\multiput(13,2.96)(0.01,0.5){1}{\line(0,1){0.5}}
\multiput(13,3.96)(0,-0.5){1}{\line(0,-1){0.5}}
\multiput(12.98,4.46)(0.02,-0.5){1}{\line(0,-1){0.5}}
\multiput(12.94,4.95)(0.04,-0.5){1}{\line(0,-1){0.5}}
\multiput(12.89,5.45)(0.05,-0.49){1}{\line(0,-1){0.49}}
\multiput(12.82,5.94)(0.07,-0.49){1}{\line(0,-1){0.49}}
\multiput(12.73,6.43)(0.09,-0.49){1}{\line(0,-1){0.49}}
\multiput(12.63,6.92)(0.1,-0.49){1}{\line(0,-1){0.49}}
\multiput(12.51,7.4)(0.12,-0.48){1}{\line(0,-1){0.48}}
\multiput(12.38,7.88)(0.07,-0.24){2}{\line(0,-1){0.24}}
\multiput(12.23,8.35)(0.08,-0.24){2}{\line(0,-1){0.24}}
\multiput(12.06,8.82)(0.08,-0.23){2}{\line(0,-1){0.23}}
\multiput(11.88,9.29)(0.09,-0.23){2}{\line(0,-1){0.23}}
\multiput(11.68,9.74)(0.1,-0.23){2}{\line(0,-1){0.23}}
\multiput(11.47,10.19)(0.07,-0.15){3}{\line(0,-1){0.15}}
\multiput(11.24,10.64)(0.08,-0.15){3}{\line(0,-1){0.15}}
\multiput(11,11.07)(0.08,-0.15){3}{\line(0,-1){0.15}}
\multiput(10.75,11.5)(0.09,-0.14){3}{\line(0,-1){0.14}}
\multiput(10.48,11.92)(0.09,-0.14){3}{\line(0,-1){0.14}}
\multiput(10.2,12.33)(0.07,-0.1){4}{\line(0,-1){0.1}}
\multiput(9.9,12.73)(0.07,-0.1){4}{\line(0,-1){0.1}}
\multiput(9.59,13.12)(0.08,-0.1){4}{\line(0,-1){0.1}}
\multiput(9.27,13.5)(0.08,-0.09){4}{\line(0,-1){0.09}}
\multiput(8.94,13.87)(0.08,-0.09){4}{\line(0,-1){0.09}}
\multiput(8.59,14.22)(0.09,-0.09){4}{\line(0,-1){0.09}}
\multiput(8.23,14.57)(0.09,-0.09){4}{\line(1,0){0.09}}
\multiput(7.86,14.9)(0.09,-0.08){4}{\line(1,0){0.09}}
\multiput(7.48,15.22)(0.09,-0.08){4}{\line(1,0){0.09}}
\multiput(7.09,15.53)(0.1,-0.08){4}{\line(1,0){0.1}}
\multiput(6.69,15.83)(0.1,-0.07){4}{\line(1,0){0.1}}
\multiput(6.28,16.11)(0.1,-0.07){4}{\line(1,0){0.1}}
\multiput(5.86,16.38)(0.14,-0.09){3}{\line(1,0){0.14}}
\multiput(5.44,16.63)(0.14,-0.08){3}{\line(1,0){0.14}}
\multiput(5,16.88)(0.15,-0.08){3}{\line(1,0){0.15}}

\linethickness{0.3mm}
\multiput(2.72,26.59)(0.1,0.07){4}{\line(1,0){0.1}}
\multiput(2.34,26.26)(0.09,0.08){4}{\line(1,0){0.09}}
\multiput(1.99,25.9)(0.09,0.09){4}{\line(0,1){0.09}}
\multiput(1.68,25.51)(0.08,0.1){4}{\line(0,1){0.1}}
\multiput(1.4,25.1)(0.09,0.14){3}{\line(0,1){0.14}}
\multiput(1.16,24.66)(0.08,0.15){3}{\line(0,1){0.15}}
\multiput(0.96,24.2)(0.1,0.23){2}{\line(0,1){0.23}}
\multiput(0.81,23.72)(0.08,0.24){2}{\line(0,1){0.24}}
\multiput(0.7,23.23)(0.11,0.49){1}{\line(0,1){0.49}}
\multiput(0.63,22.74)(0.07,0.5){1}{\line(0,1){0.5}}
\multiput(0.61,22.24)(0.02,0.5){1}{\line(0,1){0.5}}
\multiput(0.61,22.24)(0.02,-0.5){1}{\line(0,-1){0.5}}
\multiput(0.63,21.74)(0.07,-0.5){1}{\line(0,-1){0.5}}
\multiput(0.7,21.25)(0.11,-0.49){1}{\line(0,-1){0.49}}
\multiput(0.82,20.76)(0.08,-0.24){2}{\line(0,-1){0.24}}
\multiput(0.98,20.28)(0.07,-0.15){3}{\line(0,-1){0.15}}
\multiput(1.18,19.83)(0.08,-0.15){3}{\line(0,-1){0.15}}
\multiput(1.42,19.39)(0.07,-0.1){4}{\line(0,-1){0.1}}
\multiput(1.7,18.97)(0.08,-0.1){4}{\line(0,-1){0.1}}
\multiput(2.01,18.59)(0.09,-0.09){4}{\line(0,-1){0.09}}
\multiput(2.37,18.23)(0.1,-0.08){4}{\line(1,0){0.1}}
\multiput(2.75,17.91)(0.1,-0.07){4}{\line(1,0){0.1}}
\multiput(3.16,17.62)(0.14,-0.08){3}{\line(1,0){0.14}}
\multiput(3.59,17.37)(0.15,-0.07){3}{\line(1,0){0.15}}
\multiput(4.04,17.16)(0.24,-0.08){2}{\line(1,0){0.24}}
\multiput(4.52,17)(0.24,-0.06){2}{\line(1,0){0.24}}

\linethickness{1mm}
\put(5,19.38){\line(0,1){20.62}}
\linethickness{1mm}
\put(5,0){\line(0,1){13.75}}
\linethickness{0.2mm}
\put(11.4,0.7){\line(0,1){0.49}}
\multiput(11.4,0.7)(0.08,-0.23){2}{\line(0,-1){0.23}}
\multiput(11.55,0.23)(0.07,-0.1){4}{\line(0,-1){0.1}}
\multiput(11.84,-0.17)(0.1,-0.07){4}{\line(1,0){0.1}}
\multiput(12.24,-0.46)(0.24,-0.08){2}{\line(1,0){0.24}}
\put(12.71,-0.61){\line(1,0){0.49}}
\multiput(13.21,-0.61)(0.24,0.08){2}{\line(1,0){0.24}}
\multiput(13.68,-0.46)(0.1,0.07){4}{\line(1,0){0.1}}
\multiput(14.08,-0.17)(0.07,0.1){4}{\line(0,1){0.1}}
\multiput(14.37,0.23)(0.08,0.23){2}{\line(0,1){0.23}}
\put(14.52,0.7){\line(0,1){0.49}}
\multiput(14.37,1.66)(0.08,-0.23){2}{\line(0,-1){0.23}}
\multiput(14.08,2.06)(0.07,-0.1){4}{\line(0,-1){0.1}}
\multiput(13.68,2.35)(0.1,-0.07){4}{\line(1,0){0.1}}
\multiput(13.21,2.5)(0.24,-0.08){2}{\line(1,0){0.24}}
\put(12.71,2.5){\line(1,0){0.49}}
\multiput(12.24,2.35)(0.24,0.08){2}{\line(1,0){0.24}}
\multiput(11.84,2.06)(0.1,0.07){4}{\line(1,0){0.1}}
\multiput(11.55,1.66)(0.07,0.1){4}{\line(0,1){0.1}}
\multiput(11.4,1.19)(0.08,0.23){2}{\line(0,1){0.23}}

\linethickness{0.2mm}
\put(17.82,0.7){\line(0,1){0.49}}
\multiput(17.82,0.7)(0.08,-0.23){2}{\line(0,-1){0.23}}
\multiput(17.97,0.23)(0.07,-0.1){4}{\line(0,-1){0.1}}
\multiput(18.26,-0.17)(0.1,-0.07){4}{\line(1,0){0.1}}
\multiput(18.66,-0.46)(0.24,-0.08){2}{\line(1,0){0.24}}
\put(19.13,-0.61){\line(1,0){0.49}}
\multiput(19.63,-0.61)(0.24,0.08){2}{\line(1,0){0.24}}
\multiput(20.1,-0.46)(0.1,0.07){4}{\line(1,0){0.1}}
\multiput(20.5,-0.17)(0.07,0.1){4}{\line(0,1){0.1}}
\multiput(20.79,0.23)(0.08,0.23){2}{\line(0,1){0.23}}
\put(20.94,0.7){\line(0,1){0.49}}
\multiput(20.79,1.66)(0.08,-0.23){2}{\line(0,-1){0.23}}
\multiput(20.5,2.06)(0.07,-0.1){4}{\line(0,-1){0.1}}
\multiput(20.1,2.35)(0.1,-0.07){4}{\line(1,0){0.1}}
\multiput(19.63,2.5)(0.24,-0.08){2}{\line(1,0){0.24}}
\put(19.13,2.5){\line(1,0){0.49}}
\multiput(18.66,2.35)(0.24,0.08){2}{\line(1,0){0.24}}
\multiput(18.26,2.06)(0.1,0.07){4}{\line(1,0){0.1}}
\multiput(17.97,1.66)(0.07,0.1){4}{\line(0,1){0.1}}
\multiput(17.82,1.19)(0.08,0.23){2}{\line(0,1){0.23}}

\linethickness{0.2mm}
\put(11.53,38.32){\line(0,1){0.49}}
\multiput(11.53,38.32)(0.08,-0.24){2}{\line(0,-1){0.24}}
\multiput(11.68,37.85)(0.07,-0.1){4}{\line(0,-1){0.1}}
\multiput(11.97,37.45)(0.1,-0.07){4}{\line(1,0){0.1}}
\multiput(12.37,37.16)(0.23,-0.08){2}{\line(1,0){0.23}}
\put(12.84,37.01){\line(1,0){0.49}}
\multiput(13.33,37.01)(0.23,0.08){2}{\line(1,0){0.23}}
\multiput(13.8,37.16)(0.1,0.07){4}{\line(1,0){0.1}}
\multiput(14.2,37.45)(0.07,0.1){4}{\line(0,1){0.1}}
\multiput(14.49,37.85)(0.08,0.24){2}{\line(0,1){0.24}}
\put(14.64,38.32){\line(0,1){0.49}}
\multiput(14.49,39.29)(0.08,-0.24){2}{\line(0,-1){0.24}}
\multiput(14.2,39.69)(0.07,-0.1){4}{\line(0,-1){0.1}}
\multiput(13.8,39.98)(0.1,-0.07){4}{\line(1,0){0.1}}
\multiput(13.33,40.13)(0.23,-0.08){2}{\line(1,0){0.23}}
\put(12.84,40.13){\line(1,0){0.49}}
\multiput(12.37,39.98)(0.23,0.08){2}{\line(1,0){0.23}}
\multiput(11.97,39.69)(0.1,0.07){4}{\line(1,0){0.1}}
\multiput(11.68,39.29)(0.07,0.1){4}{\line(0,1){0.1}}
\multiput(11.53,38.82)(0.08,0.24){2}{\line(0,1){0.24}}

\linethickness{0.2mm}
\put(17.95,38.32){\line(0,1){0.49}}
\multiput(17.95,38.32)(0.08,-0.24){2}{\line(0,-1){0.24}}
\multiput(18.1,37.85)(0.07,-0.1){4}{\line(0,-1){0.1}}
\multiput(18.39,37.45)(0.1,-0.07){4}{\line(1,0){0.1}}
\multiput(18.79,37.16)(0.23,-0.08){2}{\line(1,0){0.23}}
\put(19.26,37.01){\line(1,0){0.49}}
\multiput(19.75,37.01)(0.23,0.08){2}{\line(1,0){0.23}}
\multiput(20.22,37.16)(0.1,0.07){4}{\line(1,0){0.1}}
\multiput(20.62,37.45)(0.07,0.1){4}{\line(0,1){0.1}}
\multiput(20.91,37.85)(0.08,0.24){2}{\line(0,1){0.24}}
\put(21.06,38.32){\line(0,1){0.49}}
\multiput(20.91,39.29)(0.08,-0.24){2}{\line(0,-1){0.24}}
\multiput(20.62,39.69)(0.07,-0.1){4}{\line(0,-1){0.1}}
\multiput(20.22,39.98)(0.1,-0.07){4}{\line(1,0){0.1}}
\multiput(19.75,40.13)(0.23,-0.08){2}{\line(1,0){0.23}}
\put(19.26,40.13){\line(1,0){0.49}}
\multiput(18.79,39.98)(0.23,0.08){2}{\line(1,0){0.23}}
\multiput(18.39,39.69)(0.1,0.07){4}{\line(1,0){0.1}}
\multiput(18.1,39.29)(0.07,0.1){4}{\line(0,1){0.1}}
\multiput(17.95,38.82)(0.08,0.24){2}{\line(0,1){0.24}}

\linethickness{0.3mm}
\put(26.7,2.49){\line(0,1){34.38}}
\linethickness{0.2mm}
\put(28.26,38.17){\line(0,1){0.49}}
\multiput(28.11,39.14)(0.08,-0.24){2}{\line(0,-1){0.24}}
\multiput(27.82,39.54)(0.07,-0.1){4}{\line(0,-1){0.1}}
\multiput(27.42,39.83)(0.1,-0.07){4}{\line(1,0){0.1}}
\multiput(26.95,39.98)(0.23,-0.08){2}{\line(1,0){0.23}}
\put(26.46,39.98){\line(1,0){0.49}}
\multiput(25.99,39.83)(0.23,0.08){2}{\line(1,0){0.23}}
\multiput(25.59,39.54)(0.1,0.07){4}{\line(1,0){0.1}}
\multiput(25.3,39.14)(0.07,0.1){4}{\line(0,1){0.1}}
\multiput(25.15,38.67)(0.08,0.24){2}{\line(0,1){0.24}}
\put(25.15,38.17){\line(0,1){0.49}}
\multiput(25.15,38.17)(0.08,-0.24){2}{\line(0,-1){0.24}}
\multiput(25.3,37.7)(0.07,-0.1){4}{\line(0,-1){0.1}}
\multiput(25.59,37.3)(0.1,-0.07){4}{\line(1,0){0.1}}
\multiput(25.99,37.01)(0.23,-0.08){2}{\line(1,0){0.23}}
\put(26.46,36.86){\line(1,0){0.49}}
\multiput(26.95,36.86)(0.23,0.08){2}{\line(1,0){0.23}}
\multiput(27.42,37.01)(0.1,0.07){4}{\line(1,0){0.1}}
\multiput(27.82,37.3)(0.07,0.1){4}{\line(0,1){0.1}}
\multiput(28.11,37.7)(0.08,0.24){2}{\line(0,1){0.24}}

\linethickness{0.2mm}
\put(26.58,0.79){\circle{3.16}}

\put(32.96,38.75){\makebox(0,0)[cc]{$\cdots$}}

\put(32.33,0.62){\makebox(0,0)[cc]{$\cdots$}}

\linethickness{0.3mm}
\put(38,2.5){\line(0,1){34.37}}
\linethickness{0.2mm}
\put(39.55,38.17){\line(0,1){0.49}}
\multiput(39.4,39.14)(0.08,-0.24){2}{\line(0,-1){0.24}}
\multiput(39.11,39.54)(0.07,-0.1){4}{\line(0,-1){0.1}}
\multiput(38.71,39.83)(0.1,-0.07){4}{\line(1,0){0.1}}
\multiput(38.24,39.98)(0.23,-0.08){2}{\line(1,0){0.23}}
\put(37.75,39.98){\line(1,0){0.49}}
\multiput(37.28,39.83)(0.23,0.08){2}{\line(1,0){0.23}}
\multiput(36.88,39.54)(0.1,0.07){4}{\line(1,0){0.1}}
\multiput(36.59,39.14)(0.07,0.1){4}{\line(0,1){0.1}}
\multiput(36.44,38.67)(0.08,0.24){2}{\line(0,1){0.24}}
\put(36.44,38.17){\line(0,1){0.49}}
\multiput(36.44,38.17)(0.08,-0.24){2}{\line(0,-1){0.24}}
\multiput(36.59,37.7)(0.07,-0.1){4}{\line(0,-1){0.1}}
\multiput(36.88,37.3)(0.1,-0.07){4}{\line(1,0){0.1}}
\multiput(37.28,37.01)(0.23,-0.08){2}{\line(1,0){0.23}}
\put(37.75,36.86){\line(1,0){0.49}}
\multiput(38.24,36.86)(0.23,0.08){2}{\line(1,0){0.23}}
\multiput(38.71,37.01)(0.1,0.07){4}{\line(1,0){0.1}}
\multiput(39.11,37.3)(0.07,0.1){4}{\line(0,1){0.1}}
\multiput(39.4,37.7)(0.08,0.24){2}{\line(0,1){0.24}}

\linethickness{0.2mm}
\put(39.43,0.55){\line(0,1){0.49}}
\multiput(39.28,1.51)(0.08,-0.23){2}{\line(0,-1){0.23}}
\multiput(38.99,1.91)(0.07,-0.1){4}{\line(0,-1){0.1}}
\multiput(38.59,2.2)(0.1,-0.07){4}{\line(1,0){0.1}}
\multiput(38.12,2.35)(0.24,-0.08){2}{\line(1,0){0.24}}
\put(37.62,2.35){\line(1,0){0.49}}
\multiput(37.15,2.2)(0.24,0.08){2}{\line(1,0){0.24}}
\multiput(36.75,1.91)(0.1,0.07){4}{\line(1,0){0.1}}
\multiput(36.46,1.51)(0.07,0.1){4}{\line(0,1){0.1}}
\multiput(36.31,1.04)(0.08,0.23){2}{\line(0,1){0.23}}
\put(36.31,0.55){\line(0,1){0.49}}
\multiput(36.31,0.55)(0.08,-0.23){2}{\line(0,-1){0.23}}
\multiput(36.46,0.08)(0.07,-0.1){4}{\line(0,-1){0.1}}
\multiput(36.75,-0.32)(0.1,-0.07){4}{\line(1,0){0.1}}
\multiput(37.15,-0.61)(0.24,-0.08){2}{\line(1,0){0.24}}
\put(37.62,-0.76){\line(1,0){0.49}}
\multiput(38.12,-0.76)(0.24,0.08){2}{\line(1,0){0.24}}
\multiput(38.59,-0.61)(0.1,0.07){4}{\line(1,0){0.1}}
\multiput(38.99,-0.32)(0.07,0.1){4}{\line(0,1){0.1}}
\multiput(39.28,0.08)(0.08,0.23){2}{\line(0,1){0.23}}
\linethickness{0.3mm}
\put(19.38,2.5){\line(0,1){34.37}}
\end{picture}
\caption{The element $G_0$} \label{ch5G0}
\end{figure}
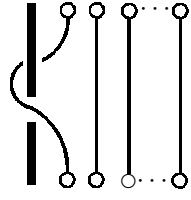

As the pole has order two, it is an involution.  Conjugation by $G_0$ is an
automorphism of $\KT(\D_n)^{(1)}$ as it leaves invariant the set of defining
relations for $\KT(\D_n)^{(1)}$; for instance, it interchanges the first two
pole-related self-intersection relations (v) and (vi). It also leaves the
subalgebras $\KT(\D_n)^{(2)}$ and $\KT(\D_n)$ invariant.  Moreover, $G_1 =
G_0G_2G_0$, and $E_1 = G_0E_2G_0$, whereas $G_0G_iG_0=G_i$ for $i\geq 3$, so
conjugation by $G_0$ is also an automorphism of the subalgebra of
$\KT(\D_n)^{(1)}$ generated by the $ G_i$ and $E_i$ for $1\le i \le n$.
\end{Remarks}

\begin{Prop}\label{homom}
If $n\ge 2$, there is a homomorphism $\phi: \BMW(\D_n) \to \KT(\D_n)$ of
$R$-algebras determined by $\phi(g_i) = G_i$ and $\phi(e_i)=E_i$ for all 
$i\in\{1,\ldots,n\}$.

\end{Prop}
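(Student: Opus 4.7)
The plan is to verify that the eleven types of defining relations for $\BMW(\D_n)$ hold in $\KT(\D_n)$ after replacing $g_i, e_i$ by $G_i, E_i$. I will split the verification into three regimes depending on which indices appear.

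First, for relations involving only indices $i, j \geq 2$, the tangles $G_i, E_i$ lie in the subalgebra of $\KT(\D_n)^{(1)}$ generated by $\{G_i, E_i : i \geq 2\}$, which by Remark~\ref{def:Gi}(i) is naturally isomorphic to the classical Kauffman tangle algebra $\KT(\A_{n-1})$. The classical Morton--Wasserman theorem (or equivalently, direct verification using the Kauffman skein relation (i), the self-intersection relations (iii), the idempotent relation (iv), and Reidemeister moves II and III) gives all $\BMW$-relations on generators indexed by $i \geq 2$. This disposes of (B1), (B2), (D1), (R1), (R2), (RSer), (HSee), (HCer), (HCee), (RNrre), (RNerr) whenever all indices lie in $\{2, \ldots, n\}$.

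Second, for relations involving the index $1$ together with some index $i \geq 3$, I will use the automorphism of $\KT(\D_n)^{(1)}$ given by conjugation by $G_0$, as recorded in Remark~\ref{def:Gi}(ii). Because the pole has order two, $G_0^2$ acts as the identity; hence conjugation by $G_0$ swaps $G_1 \leftrightarrow G_2$ and $E_1 \leftrightarrow E_2$ while fixing $G_i, E_i$ for $i \geq 3$. Thus any purported $\BMW$-relation involving $\{G_1, E_1\}$ and $\{G_i, E_i : i \geq 3\}$ becomes, after conjugation by $G_0$, the corresponding relation with $1$ replaced by $2$, which was already verified in the first step. In particular this handles the braid relation $G_1 G_3 G_1 = G_3 G_1 G_3$, the $\D_n$-specific instances of (HCer), (HCee), (RNrre), (RNerr) for pairs $(1, i)$ with $i \geq 3$, as well as the single-generator relations (D1), (R1), (RSer), (HSee) for $i = 1$ (which reduce to the case $i = 2$). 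The Coxeter adjacency in type $\D_n$ is also consistent: $1 \sim 3$ corresponds under conjugation to $2 \sim 3$.

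Third, the remaining relations are the four commuting identities between the pole-twisted generators at $1$ and the classical generators at $2$:
\[
G_1 G_2 = G_2 G_1,\quad E_1 G_2 = G_2 E_1,\quad G_1 E_2 = E_2 G_1,\quad E_1 E_2 = E_2 E_1.
\]
These are not obtained by conjugation (since $G_0$ swaps indices $1$ and $2$), and constitute the main obstacle. I will check them by direct diagrammatic manipulation using the commuting relation (ii) of Definition~\ref{ch5main} and its sign variants given by Lemma~\ref{lm:crossChanges}. The geometric idea is that $G_1$ and $E_1$ differ from $G_2$ and $E_2$ only by pole twists placed on the two leftmost strands, and relation (ii) precisely allows a crossing of two strands to be commuted past a pole twist carried by those strands. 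After stacking $G_1$ on top of $G_2$ (respectively $G_2$ on top of $G_1$), one applies relation (ii) repeatedly to migrate the two pole twists of $G_1$ through the crossing of $G_2$; an even number of such migrations, combined with Reidemeister~II around the pole and the double twist relation of Figure~\ref{ch5doubletwist}, shows the two products agree. The cases with one or two horizontal caps (the $E$-generators) are analogous, using additionally relation (vi) or isotopy of a horizontal strand past the pole. This last step is the technical heart of the proof; everything else is essentially bookkeeping against the two earlier reductions.
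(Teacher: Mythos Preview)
Your three-part decomposition matches the paper's argument exactly, and your use of $G_0$-conjugation for the second regime is precisely what the paper does. The treatment of $G_1G_2=G_2G_1$ via the commuting relation (ii) also agrees with the paper.

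The gap is in your third step, specifically for the relations involving $E$-generators at indices $1$ and $2$. You claim these are ``analogous'' and can be handled by relation (ii), its sign variants, relation (vi), and isotopy. This does not work for $E_1E_2 = E_2E_1$. The tangle $E_1E_2$ consists of a top cap with a pole twist, a closed loop around the pole (formed by the bottom cup of $E_1$ meeting the top cap of $E_2$), and a plain bottom cup; the tangle $E_2E_1$ is the same except the pole twist sits on the bottom cup instead of the top cap. There are \emph{no crossings} anywhere in either tangle, so neither relation (ii) nor relation (vi)---both of which manipulate a crossing---can apply. What is actually required is the first closed pole loop relation (vii), together with the pole having order two, and this is exactly what the paper invokes.

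For the mixed relation $E_1G_2 = G_2E_1$ the paper uses the first pole-related self-intersection relation (v) together with $G_0$-conjugation (which then immediately yields $E_2G_1=G_1E_2$ as well), rather than (vi). Your account of these two relations is too vague to stand on its own, though the difficulty there is less fundamental than for $E_1E_2=E_2E_1$, where you have simply not invoked the relevant defining relation.
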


\begin{proof}
We need to check that the defining relations of the BMW algebra $\BMW(\D_n)$,
given in Definition~\ref{BMW-def}, are respected by the tangles.  If the
indices of the generator symbols do not include $1$, the pole is not involved
and the equalities follow from \cite{MorWas}.  Also, the defining relation
(D1) is covered by the Kauffman skein relation (i).
If one of the indices is $1$ and the second not $2$, conjugate by $G_0$ to get
$E_2$ or $G_2$.  The relations can be obtained this way except for ones
involving both $i=1$ and $j=2$.  
The relation (B1), $G_1G_2 = G_2G_1$, is straightforward from the commuting
relation (i).
The relation (HCer), $E_1G_2=G_2E_1$, follows from the first pole-related
self-intersection and conjugation by $G_0$.
The relation $E_2G_1=G_1E_2$ follows by conjugation with $G_0$.
Finally, the relation (HCee),
$E_1E_2=E_2E_1$, follows from the first closed pole loop relation and the pole
being of order two.
\end{proof}

Later, in Theorem~\ref{th:surjhomo}, we will show that $\phi$ is surjective.

\section{Totally Descending Tangles}
\label{TotDescending} \label{sec:totallyDescTangles}
In this section we identify a spanning set of tangles for $\KT(\D_n)$.  The
result is Proposition \ref{tottangle} below. We will restrict our attention to
certain tangles, called totally descending, as in \cite{MorWas}.

\begin{Def}\label{df:descending}
\rm Given a tangle $T$, choose a sequence of base points: firstly an endpoint
(in $K$) for each full non-closed strand, and, secondly a point on each closed
strand. Subsequently provide each strand with an orientation determined by a
direction starting at the base point.  We say $T$ is {\em totally descending}
(with respect to the ordered base points and orientations) if, on traversing
all the strands from $T$ in order of the base points, we meet each crossing
for the first time as an over crossing. Such a crossing is called {\em
descending}.
\end{Def}

\begin{Lm}\label{totspan}
$\KT(\D_n)$ is spanned by totally descending tangles.
\end{Lm}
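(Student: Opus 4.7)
The plan is to follow the standard Morton--Wasserman argument, adapted to account for the pole, via a double induction on the lexicographic pair (total number of crossings in $T$, number of non-descending crossings in $T$) once base points and orientations have been chosen on the strands of $T$.

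First, I would fix $T\in\KT(\D_n)$, choose a sequence of base points and orientations on its strands, and ask whether $T$ is totally descending in the sense of Definition~\ref{df:descending}. If it is, there is nothing to prove. Otherwise, let $c$ be the earliest crossing (in traversal order) that is first met as an undercrossing. I would then apply the Kauffman skein relation (i) locally at $c$: this rewrites $T$ as a linear combination of a tangle $T'$, in which the crossing $c$ has been switched to its opposite sign, together with the two smoothings $T_{0}$ and $T_{\infty}$ of $c$, each of which has one fewer crossing than $T$. Using the same base points and orientations on $T'$ as on $T$, every crossing encountered before $c$ remains descending, and $c$ itself is now descending; so $T'$ has the same total number of crossings as $T$ but strictly fewer non-descending crossings. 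On $T_{0}$ and $T_{\infty}$ the total crossing count has strictly dropped, and I may choose base points and orientations freely. The induction hypothesis then applies to each of $T'$, $T_{0}$, $T_{\infty}$, yielding the claim for $T$. The base case, a tangle with no crossings, is trivially totally descending.

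The genuine subtlety of the proof, and the step I expect to be the main obstacle, is checking that after each application of the skein relation the resulting tangles still lie in $\KT(\D_n)$ rather than merely in the larger algebra $\KT(\D_n)^{(2)}$. The switched tangle $T'$ is unproblematic, since it has exactly the same strand topology as $T$ and the same closed strands; so the type-$\D$ condition of Definition~\ref{df:tangletype} is inherited. For the smoothings $T_{0}$ and $T_{\infty}$, the skein is local and the pole lies outside the disk containing $c$, so the parity of pole twists is preserved and $T_{0}, T_{\infty}$ remain of type $\D^{(2)}$. The remaining point is the horizontal-strand clause of type $\D$: smoothing at $c$ may create a new closed component, but because the disk of $c$ avoids the pole this component carries no pole twist, and one checks by inspection of the local picture that the ambient horizontal-strand structure is at worst preserved.

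If a borderline smoothing does produce a closed loop twisting around the pole, I would appeal to Remark~\ref{rmk:onetwist}, Proposition~\ref{prop:addrel}, and Lemma~\ref{ch5KT0} to rewrite the offending term as an $R[\Theta,\Xi^{+}]$-combination (using Lemma~\ref{xicommutes}) of tangles of type $\D$, possibly at the cost of further crossing reductions which only help the induction. Once the closure under the skein step inside $\KT(\D_n)$ is verified, the lexicographic induction goes through verbatim and exhibits every $T\in\KT(\D_n)$ as an $R$-linear combination of totally descending tangles, as required.
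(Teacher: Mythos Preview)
Your first paragraph is exactly the paper's proof: the same double induction on (number of crossings, number of non-descending crossings), applying the Kauffman skein relation at the first non-descending crossing and invoking induction on the three resulting terms. That is all the paper does.

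The concern you raise in the second and third paragraphs — whether the smoothings $T_0,T_\infty$ remain of type $\D$ — is one the paper does not address at all. Your attempt to dispose of it contains a genuine error: the assertion that a closed component created by smoothing ``carries no pole twist'' because ``the disk of $c$ avoids the pole'' is false. The strand passing through $c$ may twist around the pole \emph{outside} that disk, and smoothing a self-crossing of such a strand can pinch off a closed loop that inherits the pole twist; this is exactly what happens at a pole-related self-intersection. Your fallback via $R[\Theta,\Xi^+]$ does not rescue the situation either, since $\Theta T'$ or $\Xi^\pm T'$ is of type $\D$ only when $T'$ has a horizontal strand, and the problematic case is precisely when it does not.

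The paper sidesteps this issue by not requiring the intermediate totally descending tangles to be of type $\D$: the skein relation preserves the parity of pole twists, so every term produced lies in $\KT(\D_n)^{(2)}$, and the lemma is used only in this sense. The refinement to a spanning set actually consisting of type-$\D$ tangles is deferred to Theorem~\ref{tottangle}, where the phrase ``the requirement that the tangle be of type $\D$'' is invoked after the rewriting of Proposition~\ref{ch5rewrite}. So your first paragraph alone already matches what the paper proves; the remaining paragraphs raise a legitimate subtlety that neither you nor the paper resolves at the level of this lemma.
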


\begin{proof}
The proof is similar to the proof of Theorem 2.6 in \cite{MorWas}
and is done by induction on first the number of crossings, then on
the number of non-descending crossings.

\nl Let $T$ be a tangle in $\KT(\D_n)$. Choose a sequence of base points for
$T$. Follow the strands of $T$ in this determined order. At the first
non-descending crossing apply the Kauffman skein relation (i). This results in
a linear combination of a tangle with this particular crossing changed to
a descending one and two tangles with the crossing removed.
Induction shows we can
write every tangle in $\KT(\D_n)$ as a linear combination of
totally descending crossings this way.
\end{proof}

Each strand of a totally descending tangle lies entirely above the strands
that appear later in the order.  The importance of totally descending tangles
is that Reidemeister moves can always be made.  In the case of Reidemeister
III the move is with respect to at least one of the crossings.  This is
because one of the strands is first in the order given by Definition
\ref{df:descending} and one is last.  This means one is above the other two,
and one below the other two and stays that way.  For Reidemeister II one is
above the other.

The next result involves eight kinds of regions always to the right of the
pole.

\begin{Def}\label{def:typeE}
\rm Let $T$ be a tangle in $\U_n^{(1)}$.
A region of $T$ is understood to be a part of the
$x,z$-plane $\Pi$ in which $T$ is projected by means of the
natural projection along the $y$-axis; it is bounded by segments of strands
and segments from the borders of the diagram, with the understanding that the
West border of $T$ consists of the pole; so parts of the strands that
are left of the pole never occur as parts of a region. In particular, a region
can be crossed by strands.  Crossings of strands will be interpreted as real
crossings in the plane $\Pi$ where the boundaries of regions are concerned,
even though in $T$ one strand of the crossing passes above the other.

A region of $T$ is called of type E if one of the following
cases occurs.

\begin{itemize}
\item[(1)]  The region is bounded by exactly
 three strands as in Reidemeister III.

\item[(2)] The region is bounded by exactly two strands as in Reidemeister II
or by just one self-intersecting strand as in Reidemeister I; or the region is
enclosed by a strand twisting around the pole exactly once with one
self-intersection as in a pole-related self-intersection, in which case the
region is bounded by the part of the pole where the strand twists around the
pole and the segments of the strand until they cross to the right of the pole.

\item[(3)] The region is bounded by the East border of $T$ and a single
strand, which starts at $n$ on the top (that is, endpoint $(n,0,1)$) and
finishes at $n$ on the bottom (endpoint $(n,0,0)$).

\item[(4)] The region is bounded by the segment of the North border between
endpoints $i$ and $i+1$, and by a single strand starting at $i$ on the top and
ending at $i+1$ on the top or by the two strands starting at the top endpoints
$i$ and $i+1$, respectively.  Similarly, the same
description using the South border.

\item[(5)] The region is bounded by a segment of the pole and the segment of a
strand between two pole twists if no other strand twists around the pole
between these two twists.

\item[(6)] The region is bounded by the topmost segment of the pole,
the leftmost segment at the top between the pole and the strand starting at
the endpoint $1$,
and the segment of the strand starting at top endpoint 1
and its first twist around the pole
if no other strand has twisted around the pole in this region.

\item[(7)] The region is enclosed by a closed strand without
self-intersections.  The strand is either entirely to the right of the pole or
twists around the pole exactly once, in which case the region is bounded by
the part of the pole where the strand twists around the pole and the segment
of the strand to the right of the pole.

\item[(8)] The region is bounded by the pole and by two strands each of which
twists around the pole while the two strands cross to the right of the pole
without any pole twists in between.
\end{itemize}
\end{Def}

\begin{Lm}\label{evacuate} %3.5
Suppose that $Q$ is a region of type E of a totally descending tangle in
$\U_n^{(1)}$ without closed strands.  Then $Q$ can be evacuated in the sense
that all strands entering the region can be removed by means of Reidemeister
moves.  The resulting tangle is totally descending, represents the same
element of $\KT(\D_n)^{(1)}$, and has no strands in the interior of $Q$.
\end{Lm}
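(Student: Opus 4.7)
The plan is to proceed by induction on the number of strand segments that cross through the interior of $Q$. If $Q$ has no such segments, there is nothing to do. Otherwise, I will show that the ``outermost'' interior segment (i.e., one that, together with part of $\partial Q$, bounds a subregion of $Q$ containing no further strand segments) can be pushed across a piece of $\partial Q$ by a Reidemeister move, thereby reducing the count by one, and that the resulting tangle is still totally descending.

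The crux is a case analysis on the eight types of region in Definition~\ref{def:typeE}. In each case, the boundary of $Q$ consists of a controlled list of strand segments (together, for types (3)--(8), with pieces of the diagram border or pole), and removing an interior strand segment that is ``closest'' to $\partial Q$ is exactly the input to a Reidemeister-type move: Reidemeister III in case (1), Reidemeister II or I in case (2), a boundary isotopy in cases (3) and (4), and the pole-interaction moves---Reidemeister II around the pole and the pole-related self-intersection relations (v) and (vi) of Definition~\ref{ch5main}, together with the double twist relation---in cases (5), (6), and (8); case (7) does not arise since the tangle has no closed strands. The key point that makes the appropriate move legal is the totally descending hypothesis: if $\alpha$ is an interior segment crossing the bounding strands $\beta_1,\dots,\beta_k$ of the cell adjacent to $\partial Q$, then in the order prescribed by the base points either $\alpha$ is traversed before each $\beta_i$ (and so passes over all of them) or after each $\beta_i$ (and so passes under all of them). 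In either case the local configuration matches a Reidemeister II/III picture exactly, and Lemma~\ref{lm:crossChanges} (together with the variants of II, III around the pole) supplies the move.

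A subtle point concerns the pole-bounded regions (5), (6), (8): an interior strand entering $Q$ from the right of the pole and exiting again need not cross the pole, so Reidemeister II to the right of the pole suffices; an interior strand that twists around the pole inside $Q$ can, using (v), (vi), and the double twist relation, be transported to a twist outside $Q$ or cancelled against an existing twist. I will verify that in each such manipulation the resulting tangle lies in $\U_n^{(1)}$ (i.e., the pole remains of order two with all its allowed relations), so the equality in $\KT(\D_n)^{(1)}$ is preserved. Type (7) is excluded by hypothesis, and this is needed because a closed loop inside $Q$ can only be removed by the idempotent relation (iv), which changes the element of $\KT(\D_n)^{(1)}$ by a factor of $\delta$; forbidding closed strands guarantees that the evacuation is a genuine equality of tangles.

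Finally, I must check that after each move the tangle is still totally descending. Since a Reidemeister move only alters crossings in the local region and does not change the homotopy class of any strand, the sequence of base points still orders the strands as before; moreover, because the strand $\alpha$ pushed across $\partial Q$ was uniformly above (resp.\ below) the boundary strands before the move, the resulting crossings (if any) outside $Q$ continue to be descending with respect to the original ordering. Thus the induction goes through. The main obstacle I anticipate is the bookkeeping in cases (5), (6), (8), where one must track how pole twists are redistributed by the double twist relation and by (v), (vi) so as to preserve the ``totally descending'' status of every newly created (or relocated) crossing with the pole; this amounts to verifying that the orientation of each strand with respect to the pole can be chosen compatibly with its base point, which follows from the fact that each strand meets the pole in a bounded number of isolated twists lying in a fixed vertical order along the pole.
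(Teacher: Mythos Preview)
Your inductive scheme is reasonable for types (1)--(4), but your treatment of the pole-bounded types (5), (6), (8) rests on a misreading of Definition~\ref{def:typeE}. Each of those three types explicitly requires that \emph{no strand other than the bounding ones twists around the pole} along the relevant pole segment (``no other strand twists around the pole between these two twists'' for (5); ``no other strand has twisted around the pole in this region'' for (6); ``without any pole twists in between'' for (8)). Consequently an interior strand of $Q$ can never have a pole twist inside $Q$, and the ``bookkeeping in cases (5), (6), (8)'' you flag as the main obstacle is vacuous. Moreover, relations (v), (vi) and the double twist relation of Definition~\ref{ch5main} are \emph{not} Reidemeister moves: they identify genuinely distinct tangles of $\U_n^{(1)}$ in the quotient algebra. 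Invoking them would contradict the lemma's assertion that evacuation proceeds ``by means of Reidemeister moves'' and would not obviously preserve total descent (since (v), (vi) transfer a self-intersection from one strand to another).

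The paper bypasses this entire issue by reducing types (3)--(8) to types (1) and (2): since the tangle has no closed strands, any strand entering $Q$ must also exit, and the arc from its entry point to its first exit, together with a portion of $\partial Q$, bounds a sub-region of type (1) or (2). That sub-region is evacuated by the case already established (induction on the total number of crossings in $Q$), and then a single Reidemeister II or III pushes the arc out of $Q$. Throughout, the pole is inert boundary; no pole-specific relation is ever used.

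A smaller correction: your claim that the interior arc $\alpha$ lies uniformly above or uniformly below all the $\beta_i$ is false in general---$\alpha$ may well be the middle strand in the base-point order. What makes Reidemeister~III available for a totally descending triangle is only that \emph{some one} of the three strands (the first or the last in the order) lies entirely above or entirely below the other two; see the paragraph following Lemma~\ref{totspan}.
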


\begin{proof}
We begin with $Q$ being a region of type (1) or (2).
We will use induction on
the total number of crossings in $Q$, including those in its boundary.
If $Q$ is not already evacuated, there are
strands which enter $Q$ and subsequently leave it.
Let $s$ be such a strand. By induction, we can evacuate any region bordered by
a self-intersection of $s$ within $Q$ and subsequently apply Reidemeister I.
Therefore, we may assume that $s$ has no self-intersections within $Q$.  If it
enters and leaves across the same strand segment of the boundary of $Q$, it
creates a new region of type (2) and within this region there are fewer total
crossings than $Q$ has and, by the induction hypothesis we can evacuate this
region; using Reidemeister II we can take $s$ completely away from $Q$ and use
induction again.  Assume therefore that $s$ leaves through a different strand
in the boundary of $Q$.  If $Q$ is of type (2), there are two new regions of
type (1).  Each has fewer total crossings as at least one of the original
crossings is not in the new region.  Use induction again to evacuate one of
the new regions.  If $Q$ is of type (1), at least one of the two new regions
of the dissection of $Q$ by $s$ is of type (1).  Again the new regions have
fewer total crossings and so, by induction, can be evacuated. In both cases,
we use of Reidemeister III to remove $s$ from $Q$.  This shows that the result
holds for regions of type (1) and type (2).

Now suppose that $Q$ is a region of another type.
If a strand enters $Q$, it
must also leave it.
By following a strand from when it enters $Q$ until it first leaves $Q$, we
obtain a new region of type (1) or type (2).  If there is just one strand
bordering the region it is certainly of type (2).  When there are two strands
it could be of type (1).  Now use the result above for regions of these types
to remove the strand from $Q$.  Continue doing this until all extraneous
strands are removed from $Q$.
\end{proof}

\begin{Prop}\label{closedloops} %3.6
Suppose that $T$ is a totally descending tangle in $\U_n^{(1)}$ and let $Q$ be
a region of type E. Then $Q$ can be evacuated in the sense that the tangle can
be rewritten to a tangle that is totally descending, represents the same
element of $\KT(\D_n)^{(1)}$, and has no strands in the interior of $Q$.
Moreover, every closed strand to the right of the pole can be removed with the
introduction of powers of $\delta$ and $l^{\pm 1}$ as coefficients of $T$ in
$\KT(\D_n)^{(1)}$.
\end{Prop}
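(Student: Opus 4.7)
The plan is to extend the argument of Lemma \ref{evacuate} simultaneously with the removal of closed strands, organising everything as a double induction on the pair (number of closed strands of $T$, total number of crossings of $T$), ordered lexicographically. Since the totally descending property is preserved under all the local moves we invoke (Reidemeister II, Reidemeister III, relations (iii)--(vii), and the pole-related relations of Proposition \ref{prop:addrel}), both claims of the proposition will follow by peeling off closed strands from the outside and then invoking Lemma \ref{evacuate} for the non-closed portion.

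First I would treat the second assertion. Let $c$ be a closed strand lying to the right of the pole that is first in the base-point ordering among all such strands. Because $T$ is totally descending, every self-intersection $p$ of $c$ is first encountered as an over-crossing, so the two sub-arcs at $p$ enclose a small region $R_p$ of type E in case (2). By the induction hypothesis applied to the tangles obtained by removing the closed strands of strictly earlier base points, $R_p$ is evacuable via Lemma \ref{evacuate}, so the self-intersection relation (iii) may be applied and removes $p$ with a factor $l^{\pm1}$. After every self-intersection of $c$ has been eliminated in this way, $c$ is a simple closed curve lying above every strand that it still crosses. Each pair of consecutive over-crossings of $c$ with another strand bounds a region of type E in case (1) or (2), which Lemma \ref{evacuate} evacuates, and a Reidemeister II move then eliminates them. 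Iterating, $c$ becomes an isolated simple loop to the right of the pole, and the idempotent relation (iv) removes it at the cost of a factor $\delta$. This lowers the closed-strand count, so the induction proceeds.

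For the first assertion, let $Q$ be a region of type E. Closed strands entirely outside $Q$ and to the right of the pole may first be removed by the paragraph above, so we may assume every closed strand either meets $Q$ or wraps the pole. For closed strands interior to $Q$ that wrap the pole, I use Remark \ref{rmk:onetwist} together with the pole-related self-intersection relations (v), (vi) to push any pole-related self-intersection onto a single strand, and then the closed pole loop relations (vii) and those of Proposition \ref{prop:addrel} to rewrite such a closed pole loop into (a scalar multiple of) a closed loop lying wholly outside $Q$ plus a remainder with strictly fewer crossings inside $Q$. These manoeuvres decrease our induction parameter. Once $Q$ contains no closed strands in its interior, Lemma \ref{evacuate} evacuates all remaining non-closed strands from $Q$ without affecting the totally descending property.

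The main obstacle I anticipate is the case of a type E region $Q$ bordered by the pole (types (5), (6), or (8)) that contains a closed strand wrapping the pole: here the isotopies available are genuinely restricted because Reidemeister I and III are forbidden around the pole, so the rewriting must be done purely via the pole-related relations of Definition \ref{ch5main}(v)--(vii) and Proposition \ref{prop:addrel}, and one has to check carefully that the scalar factors produced (powers of $\delta$ and $l^{\pm1}$, possibly together with the elements $\Theta$ and $\Xi^{\pm}$ of Lemma \ref{ch5KT0}) commute past the rest of $T$ as guaranteed by Lemma \ref{xicommutes}, so that the resulting expression really does sit in $\KT(\D_n)^{(1)}$ with the desired coefficient structure.
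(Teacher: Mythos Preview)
Your proposal is overcomplicated because you have missed a basic feature of the definition: every region of type E has its interior entirely to the right of the pole. Definition \ref{def:typeE} stipulates that regions are parts of the plane bounded on the West by the pole, and that ``parts of the strands that are left of the pole never occur as parts of a region''; in particular, a closed strand lying \emph{inside} $Q$ cannot twist around the pole. So your entire discussion of ``closed strands interior to $Q$ that wrap the pole'', your invocation of the pole-related relations (v)--(vii), of Proposition \ref{prop:addrel}, of Lemma \ref{xicommutes}, and of $\Theta$, $\Xi^{\pm}$, and your anticipated ``main obstacle'' paragraph, all address a case that does not arise. The proposition only asserts removal of closed strands to the right of the pole, with scalars from $\la\delta^{\pm1},l^{\pm1}\ra$, and this is exactly what is needed; closed strands that do wrap the pole are dealt with later, in Lemmas \ref{reducetoXi} and \ref{isolate}.

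Your induction on the base-point ordering for the second assertion is also not set up correctly. To evacuate the region $R_p$ you must invoke Lemma \ref{evacuate}, which requires the tangle to have no closed strands; your phrase ``the induction hypothesis applied to the tangles obtained by removing the closed strands of strictly earlier base points'' is circular, since those strands have not yet been shown removable, and in any case earlier in the ordering does not mean outside $R_p$. The paper's argument avoids this by choosing an \emph{innermost} closed strand $q$ inside $Q$, i.e.\ one enclosing no other closed strand. The region enclosed by $q$ is a union of closed components each bounded by a single segment of $q$; innermostness guarantees these contain no closed strands, so Lemma \ref{evacuate} applies directly. Reidemeister I (relation (iii)) then removes the self-intersections of $q$ one by one with factors $l^{\pm1}$, and the resulting simple loop is shrunk, slid off the diagram via Reidemeister II, and removed by relation (iv) with a factor $\delta$.
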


\begin{proof}
By the definition of type E, there are no closed strands twisting around the
pole inside $Q$.  If there are no closed strands inside $Q$, then by Lemma
\ref{evacuate} we can evacuate $Q$ as required.  So assume $q$ is a closed
strand inside $Q$, not enclosing another closed strand.  The region enclosed
by $q$ is made up of a finite number of regions entirely bounded by a single
segment of $q$. Such a region is called a {\em
closed component}.  Notice that each of the closed components can be evacuated
by Lemma~\ref{evacuate} and removed by Reidemeister I with the introduction of
a power of $l$ until there is just one closed component which can also be
evacuated.  Such a closed component can be shrunk
to a very small one which can be moved by Reidemeister II across any
strand and can be completely separated from the rest of the
diagram.  It can then be removed by multiplying by $\delta$ using the
idempotent relation (iv). 
\end{proof}

In view of Proposition \ref{closedloops}, we can evacuate certain regions of a
tangle in $\U_n^{(1)}$ while representing the same element of
$\KT(\D_n)^{(1)}$.  In order to keep track of the form of tangles needed for a
spanning set of $\KT(\D_n)$, we introduce the following two notions.

\begin{Def}\label{df:complexity} %3.7
\rm The {\em complexity} of a tangle $T$ in $\U_n^{(1)}$ is the sum of the
number of pole twists and the number of crossings appearing in $T$.
Such a tangle is called {\em reduced} if, as a member of $\KT(\D_n)^{(1)}$, it
is totally descending and cannot be written as a linear combination of tangles
with strictly lower complexities.
\end{Def}

In our search for a spanning set of $\KT(\D_n)$ we need only consider reduced
tangles.  In a reduced tangle, no self-intersecting strands without pole
twists occur; for otherwise a region of type E(2) would occur, which can be
evacuated by Proposition \ref{closedloops}, so that Reidemeister I could be
applied to reduce the tangle's complexity. Similarly, by Reidemeister II, no
two strands cross twice without twisting around the pole between the two
crossings.  Similarly, up to scaling by factors of $\delta$, we can assume
that no closed strands occur to the right of the pole.  As discussed in Remark
\ref{rmk:onetwist}, a reduced tangle has at most one pole-related
self-intersection.

\begin{Lm}\label{twotwists} %3.8
Let $T$ be a tangle in $\U_n^{(1)}$ with a strand $q$ having two pole twists
without pole-related self-intersections. Assume that there are no further
twists of $q$ around the pole in between these two.  Let $Q$ be the region
bounded by the pole between the two twists and the segment of $q$ between the
two twists.  If $Q$ has no closed strands, then $T$ is not reduced.
\end{Lm}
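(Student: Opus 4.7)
The plan is to show, up to assuming $T$ is totally descending (otherwise $T$ is not reduced by definition), that $T$ equals in $\KT(\D_n)^{(1)}$ a linear combination of tangles of strictly smaller complexity. The central engine is the double twist relation of Figure \ref{ch5doubletwist2}, which cancels two pole twists of a single strand whenever no other strand twists around the pole between them.

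First I would handle the special case in which no strand other than $q$ twists around the pole inside $Q$. Combined with the hypothesis that $Q$ contains no closed strand, the region $Q$ then satisfies exactly the conditions of Definition \ref{def:typeE}(5): it is bounded by a segment of the pole and a segment of a strand between two pole twists, with no other strand twisting around the pole in between. Proposition \ref{closedloops} therefore evacuates $Q$, replacing $T$ by an equal, totally descending tangle whose interior of $Q$ is empty. The two pole twists of $q$ are now adjacent on the pole, and the double twist relation cancels them, lowering the complexity by $2$ and so exhibiting $T$ as a linear combination (in fact, a single multiple) of a tangle of strictly smaller complexity.

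In the general case, foreign pole twists may occur in $Q$; I would proceed by induction on the number $k$ of such foreign twists, the base case $k=0$ being just treated. For the inductive step, pick a strand $q' \neq q$ with a pole twist inside $Q$. Since $Q$ contains no closed strand and since the pole is met only via pole twists, $q'$ must enter and exit $Q$ across the segment of $q$, and so crosses $q$ at least twice. At one such crossing apply the Kauffman skein relation (i) of Definition \ref{ch5main}: the two smoothed summands each have one crossing fewer than $T$, hence strictly smaller complexity, while the flipped summand has the same complexity but (after suitable Reidemeister moves) permits one to push the pole twist of $q'$ across an adjacent pole twist of $q$ and out of $Q$. The requisite manoeuvre is of the same flavour as the commutation argument in the proof of Lemma \ref{xicommutes}: use the pole-related self-intersection relations (v) and (vi) of Definition \ref{ch5main}, the closed pole loop relations of Proposition \ref{prop:addrel}, and the order-two property of the pole to relocate the twist, thereby strictly decreasing $k$. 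The inductive hypothesis then finishes the argument.

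The main obstacle is the careful execution of this inductive step. One must verify that sliding the pole twist of $q'$ past that of $q$ neither introduces a new closed strand inside $Q$ nor generates fresh crossings that would cancel the gain in complexity, so that the invariant \emph{either complexity or $k$ strictly decreases} is actually maintained. This is where the pole-related relations and the order-two property of the pole must be orchestrated delicately, though the geometric intuition — that two pole twists of a single strand, with nothing essential between them on the pole, must eventually be removable — makes the conclusion plausible from the outset.
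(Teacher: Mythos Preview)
Your base case (no foreign pole twists inside $Q$) is correct and matches the paper's opening move: $Q$ is then of type E(5), Proposition~\ref{closedloops} evacuates it, and the order-two property of the pole cancels the two twists of $q$.

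The inductive step, however, has a genuine gap. After you apply the Kauffman skein relation at a crossing of $q'$ with $q$, the flipped summand $T'$ has the same complexity as $T$ and sits in exactly the same geometric configuration: same strand $q$, same region $Q$, and the foreign pole twist of $q'$ still inside $Q$. Reversing one crossing sign does not by itself enable any move that slides $q'$'s twist past one of $q$'s twists. The relations you invoke---(v), (vi) of Definition~\ref{ch5main} and Proposition~\ref{prop:addrel}---govern pole-related self-intersections and closed pole loops, neither of which is present here by hypothesis; they give no purchase on this configuration. The analogy with Lemma~\ref{xicommutes} fails for the same reason: that argument transports a pole-related self-intersection, and you have none.

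The tool that actually moves a pole twist of one strand past a pole twist of another is the commuting relation (ii) of Definition~\ref{ch5main} and its sign variants from Lemma~\ref{lm:crossChanges}. This relation has a specific local form: the two twisting strands must cross in a prescribed pattern with nothing else in the way. The paper achieves this by taking the topmost foreign twist $t_3$, evacuating the type E(8) region between $t_1$ and $t_3$, and then evacuating the small triangular region $Q'$ bounded by $t_3$ and the two adjacent crossings of $q$ and $q'$ (via repeated type E(1) evacuations and Reidemeister~III). Only then does the commuting relation apply and push $t_3$ out of $Q$, contradicting minimality. Your proposal neither identifies this relation nor performs the preparatory evacuation, so the inductive step does not go through as written.
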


\begin{proof}
Assume that $T$ is a counterexample of smallest complexity.  Let $t_1$ and
$t_2$ be the two consecutive twists of $q$ around the pole and bordering $Q$.
If there are no strands of $T$ inside $Q$, the twists $t_1$ and $t_2$ are
adjacent along the pole and, as the pole has order~$2$, these twists can be
removed. As $T$ is reduced any strand entering $Q$ must twist around the pole
before leaving $Q$.  So without loss of generality we may assume there is a
further pole twist, say $t_3$ of a strand $q'$ between $t_1$ and
$t_2$.  Since, by assumption, there are no closed strands within $Q$, the
strand $q'$ enters and leaves $Q$.  If $q'$ twists around the pole twice,
there is a smaller region as in the hypotheses, and the result follows from
the minimality of the complexity of $T$. If one of the twists of $q'$ were a
pole-related self-intersection within $Q$, it could be moved to $q$ at $t_1$
or $t_2$ by Lemma \ref{lm:crossChanges}.  Therefore, we may assume that each
strand entering $Q$ twists once inside the region.

Without loss of generality, we may assume that $t_3$ is the first twist
occurring in between $t_1$ and $t_2$ when going down the pole.  This implies
that the region bounded by the segment of the pole from $t_1$ to $t_3$ and by
$q$ and $q'$ has type E(8) and can be evacuated as in Lemma
\ref{evacuate}.  Let $Q'$ be the region whose corners are the twist $t_3$ and
the two adjacent crossings of $q$ and $q'$. If $Q'$ is also evacuated, the
second commuting relation allows us to remove the twist from $Q$, a
contradiction.

We are left with the case where $Q'$ is not evacuated. So there is a strand
$q''$ that enters $Q'$ and a region $S$ of type E(1) whose corners are a
crossing of $q$ and $q'$, a crossing of $q$ and $q''$, and a crossing of $q'$
and $q''$.  Evacuate $S$ and apply Reidemeister III so as to remove $q''$ from
$Q'$.  By induction on the number of crossings in $Q'$, we can evacuate all of
$Q'$ in this way.  This leads us to the previous case and hence to the final
contradiction.
\end{proof}

\begin{Lm} \label{ch5once} %3.9
Let $T$ be a tangle in $\U_n^{(1)}$ for which there are no closed strands
which twist around the pole. Suppose that some strand of $T$ twists around the
pole two or more times without a pole-related self-intersection.  Then $T$ is
not reduced.
\end{Lm}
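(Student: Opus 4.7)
The plan is to reduce this claim to Lemma \ref{twotwists}, using the hypothesis that no closed strand wraps around the pole in order to clean up the relevant region.

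First I would select a convenient pair of pole twists. Since $q$ has at least two pole twists, and only finitely many, I choose two pole twists $t_{1}$ and $t_{2}$ of $q$ that are consecutive along the pole, that is, such that no further twist of $q$ lies strictly between them on the pole. Let $Q$ denote the region bounded by the segment of the pole from $t_{1}$ to $t_{2}$ together with the segment of $q$ connecting $t_{1}$ to $t_{2}$ on the right side of the pole. Twists of strands other than $q$ may occur inside $Q$, but this is permitted by Lemma \ref{twotwists}.

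Second, I would remove any closed strands lying inside $Q$. By hypothesis, no closed strand of $T$ twists around the pole, so any closed strand meeting $Q$ is entirely contained in the interior of $Q$. Using exactly the argument of Proposition \ref{closedloops}, each such closed strand can be disposed of: after evacuating its own interior via Lemma \ref{evacuate} (regions of type E(7)) and removing self-intersections by Reidemeister I, it can be shrunk, moved across strands by Reidemeister II, and finally deleted by the idempotent relation (iv). Each of these steps alters $T$ by a unit scalar factor in $R$ (a power of $l^{\pm1}$ for each Reidemeister~I move and a power of $\delta$ for each deletion, noting that $\delta, l \in R^{\times}$) and either leaves the complexity unchanged or strictly decreases it. In the latter case we are already finished, since $T$ equals a unit scalar times a tangle of strictly smaller complexity and is therefore not reduced. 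So without loss of generality we obtain a totally descending tangle $T'$ of the same complexity as $T$ with $T = c \cdot T'$ for some unit $c \in R$, and such that the region $Q'$ corresponding to $Q$ contains no closed strands, while $q$ still twists at $t_{1}$ and $t_{2}$ with no other twist of $q$ in between.

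Finally I would invoke Lemma \ref{twotwists} on $T'$ with the strand $q$ and the twists $t_{1}, t_{2}$: all its hypotheses are met, so $T'$ can be written as an $R$-linear combination of tangles of complexity strictly less than that of $T'$. Multiplying through by the unit $c$, the same holds for $T$, proving $T$ is not reduced.

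The main obstacle is the closed-strand cleanup of $Q$, because $Q$ itself need not be a region of type~E (other strands may twist around the pole inside $Q$), so Lemma \ref{evacuate} does not apply to $Q$ as a whole. However, the interior of each individual closed strand in $Q$ does qualify as a type~E region once any self-intersections of that strand are resolved, so the closed-strand removal can be carried out one component at a time, as in the proof of Proposition \ref{closedloops}, without ever needing to treat $Q$ itself as an evacuable region.
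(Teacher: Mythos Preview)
Your approach is correct and is essentially the paper's own argument—clear closed strands and invoke Lemma~\ref{twotwists}—but you spell out the closed-strand removal more carefully than the paper does. Two small corrections are worth noting. First, the twists $t_{1},t_{2}$ should be chosen consecutive \emph{along the strand $q$} (so that the segment of $q$ joining them stays to the right of the pole and $Q$ is well defined), not merely consecutive along the pole; these notions can differ once $q$ has three or more twists. Second, your claim that any closed strand meeting $Q$ lies entirely inside $Q$ is not quite right, since such a strand could cross the $q$-segment part of $\partial Q$; but this is harmless, because by hypothesis every closed strand avoids the pole entirely and hence can be removed via Proposition~\ref{closedloops} regardless of its position relative to $Q$. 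After that, Lemma~\ref{twotwists} applies and finishes the argument exactly as you describe.
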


\begin{proof}
Suppose that $T$ is reduced.
If there is a strand in $T$ without a pole-related self-intersection that
twists around the pole more than once, by the assumption that there are no
closed strands, Lemma~\ref{twotwists} allows us to rewrite $T$ to a linear
combination of tangles of smaller complexities.
\end{proof}

We next deal with closed strands which twist around the pole.  Examples are
the closed tangles $\Theta$, $\Xi^{+}$, and $\Xi^{-}$ which occurred in
Lemma~\ref{ch5KT0} and Figure~$\ref{ch5KT0tangles}$.  When there are no other
strands crossing such a closed strand, it can be
moved into the coefficient ring $R[\Xi^+,\Theta]$, cf.~Lemma \ref{xicommutes}.
We will show that when there is such a closed strand, the tangle can be
rewritten so that the strand has at most two twists around the pole.

\begin{Lm}\label{reducetoXi} %3.10
Suppose that $T$ is a tangle in $\U_n^{(1)}$ containing a closed strand $q$
twisting around the pole three or more times.  Assume there is no other closed
strand twisting around the pole between any two of these twists.  Then $T$ is
not reduced.
\end{Lm}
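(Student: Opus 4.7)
The plan is to reduce Lemma \ref{reducetoXi} to Lemma \ref{twotwists}, which already handles a strand with two pole twists provided the enclosed region is free of closed strands. Assume for contradiction that $T$ is reduced, and let $t_1,\ldots,t_k$ with $k\ge 3$ be the pole twists of $q$ ordered along the pole from top to bottom. The first task is to locate a consecutive pair $t_i,t_{i+1}$ in which neither twist is a pole-related self-intersection. By Remark \ref{rmk:onetwist}, I may assume $T$ contains at most one pole-related self-intersection, so at least $k-1\ge 2$ of the twists of $q$ are plain. Using the pole-related self-intersection relations (v) and (vi) together with Lemma \ref{lm:crossChanges}, the unique pole-related self-intersection (if any) residing on $q$ can be relocated to a twist of my choosing along $q$. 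A short case analysis---for $k=3$ one moves the self-intersection to $t_1$ or $t_3$, and for $k\ge 4$ the pigeonhole principle suffices---produces a consecutive pair $t_i,t_{i+1}$ of plain twists.

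Let $Q$ denote the region bounded by the segment of the pole from $t_i$ to $t_{i+1}$ and the segment of $q$ connecting these twists on the right of the pole. The goal is to apply Lemma \ref{twotwists} to $q$ at $t_i,t_{i+1}$, which would force $T$ not to be reduced, contradicting the assumption. All hypotheses of Lemma \ref{twotwists} are immediate except possibly the requirement that $Q$ contain no closed strands: the hypothesis of Lemma \ref{reducetoXi} guarantees that no other closed strand twists around the pole within the pole segment bounding $Q$, and the twists $t_i,t_{i+1}$ are plain by construction.

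The main obstacle is to rule out closed strands that merely traverse $Q$ without twisting around the pole inside it. Any such strand $p$ must enter $Q$ by crossing the $q$-segment bounding $Q$ and, since it cannot twist around the pole inside $Q$, must exit across that same $q$-segment. The region enclosed between a minimal arc of $p$ and the corresponding arc of $q$ is then of type E in the sense of Definition \ref{def:typeE}---of type (2), or of type (7) in case of a loop fully contained in $Q$---and hence may be evacuated using Lemma \ref{evacuate} and Proposition \ref{closedloops} while preserving total descendingness; a subsequent application of Reidemeister II then removes the two crossings of $p$ with $q$. Iterating this procedure over all closed strands traversing or contained in $Q$, absorbing only scalar factors from $R$, clears $Q$ of closed strands. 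Lemma \ref{twotwists} then applies to $q$ at $t_i,t_{i+1}$ and produces the desired contradiction, so $T$ is not reduced.
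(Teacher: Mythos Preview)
Your overall strategy—reduce to Lemma~\ref{twotwists}—is the paper's strategy, but there is a genuine gap. When you write ``the segment of $q$ connecting these twists on the right of the pole'' and use it to bound $Q$, you are assuming that $t_i$ and $t_{i+1}$, besides being consecutive along the pole, are also adjacent along the strand $q$; that is, that the arc leaving the bottom of $t_i$ and the arc leaving the top of $t_{i+1}$ are one and the same arc. This is not automatic. For $k\ge 4$ the cyclic order in which $q$ visits its twists need not match the pole order, and a pole-consecutive pair may fail to be joined by a single arc of $q$ (for instance, with $k=4$ and strand-order $t_1,t_3,t_2,t_4$, neither $(t_1,t_2)$ nor $(t_3,t_4)$ is strand-adjacent). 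In that situation your $Q$ is not a bounded region and Lemma~\ref{twotwists} cannot be invoked. Your case analysis only arranges that $t_i,t_{i+1}$ are \emph{plain}; it says nothing about strand-adjacency.

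The paper's proof addresses exactly this point. It moves the (at most one) pole-related self-intersection to the \emph{southmost} twist and then works with the \emph{two topmost} twists $t_1,t_2$. For that specific pair it argues that the bottom segment from $t_1$ must coincide with the top segment from $t_2$: otherwise, evacuating a region of type~E would produce a second pole-related self-intersection, contradicting reducedness via Remark~\ref{rmk:onetwist}. This adjacency argument, which uses that $t_1$ is topmost, is precisely what your proof is missing. Once strand-adjacency is established, the hypothesis of Lemma~\ref{reducetoXi} (no other closed strand twists between $t_1$ and $t_2$) together with reducedness already gives that $Q$ has no closed strands, so your final evacuation paragraph, while not wrong, is largely unnecessary.
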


\begin{proof}
Let $T$ be a counterexample of minimal complexity.  By Remark
\ref{rmk:onetwist}, of all twists around the pole in $T$, at most one is a
pole-related self-intersection, and this one can be moved to the South-most
twist, say $t_3$, of $q$ around the pole.  Consider the top twist $t_1$ of $q$
around the pole.  The two segments of $q$ beginning at $t_1$ do not cross as
these segments could not possibly both end at $t_3$ for otherwise they
would close the strand and $q$ would have at most two twists. Let $t_2$ be the
next twist of $q$ from the top. The bottom
segment of $q$ starting at $t_1$ must be joined to the top segment of $q$
starting at $t_2$, for otherwise, we would be able to produce another
pole-related self-intersection by evacuation of a region of type E
(cf.\ Proposition \ref{closedloops}).  Now the region bounded by this segment of $q$ and the part
of the pole between $t_1$ and $t_2$ satisfies the conditions of Lemma
\ref{twotwists}.  By that lemma, $T$ is not reduced, a contradiction.
\end{proof}

\begin{Lm}\label{isolate}
Suppose that $T$ is a reduced tangle of type $\D^{(1)}$ containing
a closed strand $q$ that twists around the pole. If $q$ twists around the pole
more than once, assume that there are two twists such that no closed strand
twists around the pole between them.  Then $T$ can be rewritten in such
a way that $q$ is one of $\Theta$, $\Xi^+$, or $\Xi^-$, and no other strands
cross it.
\end{Lm}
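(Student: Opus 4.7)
The plan is to combine the reduction results of Lemmas~\ref{reducetoXi} and~\ref{twotwists} with the region evacuation technique of Proposition~\ref{closedloops} and the pole-related self-intersection relations (v) and (vi) of Definition~\ref{ch5main} to first bound the combinatorial complexity of $q$ and then sweep all other strands off of it.

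First I would apply Lemma~\ref{reducetoXi} to restrict $q$ to at most two pole twists: a third twist, together with the two twists supplied by hypothesis (whose segment along the pole contains no closed strand twisting between them), would furnish a triple satisfying the hypothesis of Lemma~\ref{reducetoXi}, contradicting the reducedness of $T$. Hence $q$ has one or two pole twists. Next, I would examine the region $Q$ bounded by the pole and the arc of $q$ (in the one-twist case) or by the pole segment between the two twists and the corresponding arc of $q$ (in the two-twist case). In either case $Q$ is a region of type~E (type~E(7), or type~E(2) adjacent to a pole-related self-intersection if $q$ self-crosses between its twists), so Proposition~\ref{closedloops} evacuates $Q$, removing all interior strands. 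In the two-twist case with a self-intersection, the evacuated tangle directly matches $\Xi^+$ or $\Xi^-$ according to the sign of the self-intersection; in the two-twist case without self-intersection, the double twist relation would remove both twists, contradicting the standing hypothesis that $q$ twists around the pole, so this subcase cannot arise.

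For the one-twist subcase, I would use the first closed pole loop relation~(vii) to pair $q$ with an adjacent pole-twist structure, assembling a $\Theta$-configuration, or alternatively employ the second commuting relation (Lemma~\ref{lm:crossChanges}) together with a pole-related self-intersection relation to introduce a compensating pole-related self-intersection on $q$, converting it to a two-twist loop that reduces to $\Xi^{\pm}$ as above. Once $q$ is in canonical form, any residual crossings of external strands with $q$ are removed by shrinking $q$ close to the pole and applying Reidemeister~II, together with the first pole-related self-intersection relation, which transports any surviving pole-related self-intersection off of $q$ onto a neighbouring strand of $T$. The main obstacle will be the one-twist case: since each of $\Theta$, $\Xi^+$, $\Xi^-$ carries two pole twists while a once-twisting closed loop carries only one, the argument must correctly locate a second pole-twist structure in $T$ (by a parity argument using the defining relations) or artificially introduce one via the inverse of the double twist relation, while ensuring the subsequent transport steps do not reintroduce crossings that were just removed; this forces a careful order of operations, processing strands from innermost outward.
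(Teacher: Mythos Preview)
Your appeal to Lemma~\ref{reducetoXi} does not go through as stated. That lemma requires that \emph{no} closed strand twists around the pole between \emph{any} pair of twists of $q$, whereas the hypothesis of the present lemma only supplies one pair $t_1,t_2$ with this property. Adding a third twist $t_3$ of $q$ gives you no control over what happens between $t_1,t_3$ or $t_2,t_3$, so you cannot invoke Lemma~\ref{reducetoXi} directly to rule out three or more twists. The paper closes this gap by a descent: after disposing of the one-twist case, it passes to an \emph{innermost} closed strand, for which the hypothesis of Lemma~\ref{reducetoXi} is then automatic.

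Your treatment of the one-twist case is, as you yourself note, the real gap. The proposed alternatives (introducing a compensating self-intersection via Lemma~\ref{lm:crossChanges}, or inverting the double twist relation) do not lead anywhere clean: you would have to manufacture a second pole twist on $q$ and then argue the result is $\Xi^{\pm}$, but this changes the underlying tangle in ways that are hard to track. The paper's argument is much more direct: a single-twist closed loop is already a simple circle around the pole, and Proposition~\ref{prop:addrel}(ii) together with the presence of another pole twist lets one replace it by $\Theta$ up to a power of $\delta$. You should use that route rather than trying to promote $q$ to a two-twist loop.

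A smaller point: in the two-twist case without self-intersection you say the double twist relation gives a contradiction. What you actually need here is Lemma~\ref{twotwists}, which handles the intervening region; the raw double twist relation only applies once that region has been cleared.
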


\begin{proof}  If $q$ has only one twist, the
strand is a closed circle around the pole and so, by Proposition
\ref{prop:addrel}(ii) and the fact there are an even number of pole twists, we
can replace $q$ by $\Theta$ up to multiplication by a power of $\delta$.

Therefore we assume that there are no closed strands with at most one twist
around the pole. In particular, $q$ has at least two pole twists, say $t_1$
and $t_2$. As the number of closed strands is finite, without loss of
generality, we may assume that there is no closed strand entirely contained in
the region enclosed by $q$ and the part of the pole between $t_1$ and
$t_2$. Now, by restricting to a suitable region entirely containing $q$ and
isotopy, we find a tangle satisfying the conditions of Lemma \ref{reducetoXi}.
Applying the lemma and continuing this way, we obtain the required assertion.
\end{proof}

\begin{Lm}\label{(0,0)-tangles}  Suppose
that $T$ is a reduced tangle in $\U_n^{(2)}$.  
Then $T$, viewed as an element of $\KT(\D_n)^{(2)}$,
is an $R[\Theta,\Xi^+]$-linear combination of tangles without closed strands.
In particular, any $(0,0)$-tangle can be
written in terms of the $R$-algebra $R[\Theta,\Xi^+]$.
\end{Lm}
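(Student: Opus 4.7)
I would prove this by induction on the number of closed strands in $T$, using the earlier lemmas to peel off one closed strand at a time. The base case, in which $T$ has no closed strands, is immediate since $T$ itself is already the required linear combination.

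For the inductive step, I would first dispose of any closed strand lying entirely to the right of the pole via Proposition~\ref{closedloops}, which removes such a strand at the cost of a scalar factor of the form $\delta^{a}l^{b} \in R$. After this, I may assume every closed strand of $T$ twists around the pole. Among these, choose an innermost one $q$, meaning one for which two consecutive pole-twists of $q$ enclose no other closed strand that twists around the pole; such a $q$ exists because there are only finitely many closed strands. By Lemma~\ref{isolate}, $T$ can be rewritten so that $q$ equals $\Theta$, $\Xi^{+}$, or $\Xi^{-}$, with no other strand crossing it.

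In the $\Xi^{-}$ case, I would apply the identity $\Xi^{-} = \Xi^{+} + m(\delta - \Theta)$ from~(\ref{ch5eQ1}) to expand into a sum of tangles in which the isolated closed component is $\Theta$, $\Xi^{+}$, or empty (the $m\delta$ summand). Since each of $\Theta$ and $\Xi^{+}$ contributes exactly two pole twists, and the empty summand contributes none, the remaining tangle in every term still has an even number of pole twists and so remains of type $\D^{(2)}$. By Lemma~\ref{xicommutes}, the isolated factor $\Theta$ or $\Xi^{+}$ is a $(0,0)$-tangle that commutes with the remainder of the tangle and so can be absorbed into the coefficient ring $R[\Theta,\Xi^{+}]$. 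What remains in each term is a tangle of type $\D^{(2)}$ with strictly fewer closed strands, to which the induction hypothesis applies.

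The main obstacle is ensuring that the isolated closed component can genuinely be split off as a $(0,0)$-tangle factor in the product structure of $\KT(\D_n)^{(2)}$; this separability is exactly what Lemma~\ref{isolate} guarantees (no other strand crosses $q$) combined with the commutativity of Lemma~\ref{xicommutes}. For the final assertion, when $n = 0$ the procedure terminates at a linear combination of $(0,0)$-tangles with no closed strands, and the only such tangle is the empty (identity) tangle; hence every $(0,0)$-tangle lies in $R[\Theta,\Xi^{+}]\cdot 1 = R[\Theta,\Xi^{+}]$.
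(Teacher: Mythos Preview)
Your argument is correct and follows essentially the same route as the paper's: remove closed strands that miss the pole via Proposition~\ref{closedloops}, isolate a remaining closed strand as $\Theta$ or $\Xi^{\pm}$ via Lemma~\ref{isolate}, commute it into the coefficients via Lemma~\ref{xicommutes}, and iterate. The organizational differences are minor---you induct on the number of closed strands and explicitly rewrite $\Xi^{-}$ using~(\ref{ch5eQ1}), whereas the paper proceeds by a case split on the number of pole twists of an arbitrary closed strand, handling the single-twist case directly through Proposition~\ref{prop:addrel}(ii) and leaving the passage from $\Xi^{-}$ to $R[\Theta,\Xi^{+}]$ implicit.

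One small point: your existence claim for an innermost $q$ (``because there are only finitely many closed strands'') is not quite self-evident, since two closed strands could in principle have their pole twists interleaved so that neither has a consecutive pair free of the other. This is where the hypothesis that $T$ is \emph{reduced} does real work: Lemma~\ref{twotwists} forces a closed strand to lie entirely inside the region between two non-self-intersecting consecutive twists of any given closed strand, which makes the descent to an innermost one terminate. The paper's corresponding recursion (``by induction on the number of pole twists of other strands in the region enclosed by $q$'') is just as terse on this point, so this is a remark on precision rather than a genuine gap.
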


\begin{proof} 
Recall that $\Xi^{\pm}$ and $\Theta$ can be commuted out of $T$ by
Lemma~\ref{ch5lmcommute}.

Let $q$ be a closed strand. If it has no pole twists, it can be replaced by
the scalar $\delta$. If $q$ has a single pole twist, it can be commuted out by
the second closed pole loop relation (i) of Proposition \ref{prop:addrel}.  As
the number of pole twists is even, there must be another pole twist, which we
use to apply the third closed pole loop relation, Proposition
\ref{prop:addrel}(ii), and to extract a factor $\delta^{-1}\Theta$.  

Suppose, therefore, that $q$ has at least two pole twists. By Lemma
\ref{isolate}, we may assume that if $t_1$ and $t_2$ are a consecutive pair of
pole twists, there is a closed strand twisting around the pole between them,
which we can rewrite as required by induction on the number of pole twists of
other strands in the region enclosed by $q$.
\end{proof}

We have developed enough properties for a standard expression
in terms of closed strands and twists around the pole.  

\begin{Prop} \label{ch5rewrite}
Let $T$ be a reduced tangle in $\KT(\D_n)^{(2)}$.

\begin{enumerate}[{\rm (i)}] 
\item If $T$ contains a pole-related
self-intersection, then $T = \delta^{-1}\Xi^{\pm} T'$, where $T'$ is the
tangle obtained from $T$ by removing the pole-related self-intersection.

\item If $T$ contains a closed strand $q$ without self-intersections twisting
around the pole, then $T = \delta^{-1}\Theta T'$ where $T'$ is the tangle
obtained from $T$ by removing $q$ and all twists of other strands around the
pole.
\end{enumerate}
\end{Prop}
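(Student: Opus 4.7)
My strategy for both parts is to verify each claimed identity by computing the right-hand side directly in $\KT(\D_n)^{(2)}$ and showing that it reduces to $T$ using the defining relations of $\KT(\D_n)^{(1)}$ together with their consequences in Lemmas \ref{ch5KT0}, \ref{ch5lmcommute} and Proposition \ref{prop:addrel}. The key tools are: the centrality of $\Xi^{\pm}$ and $\Theta$ (Lemma \ref{xicommutes}), which lets me place their closed components anywhere in the diagram up to regular isotopy; the pole-related self-intersection relations (v) and (vi) from Definition \ref{ch5main}, which govern how a pole-related self-intersection migrates between strands; the double twist relation of Figure \ref{ch5doubletwist}, which cancels two consecutive pole twists on the same strand because the pole has order two; and the idempotent relation (iv), which absorbs each plain closed loop as a factor of $\delta$.

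For part (i), let $s$ be the strand of $T$ carrying the pole-related self-intersection, and let $T'$ be $T$ with only the self-crossing of that configuration erased, so that $s$ is replaced by a strand $s'$ that still twists once around the pole at the same location but is no longer self-intersecting. I plan to verify that $\Xi^{+}T'=\delta T$. Because $\Xi^{+}$ is central, I may draw its closed loop next to $s'$ so that one of its two pole twists lies immediately adjacent, along the pole, to the twist of $s'$. Applying the first pole-related self-intersection relation (v) then migrates the self-crossing of $\Xi^{+}$ onto the twist of $s'$, simultaneously restoring the original pole-related self-intersection of $T$ and leaving the closed component of $\Xi^{+}$ with two adjacent pole twists and no crossing. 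The double twist relation cancels these twists, and the resulting plain closed loop contributes the factor $\delta$ by (iv). The case of $\Xi^{-}$ is obtained by the same manipulation after reversing the sign of the relevant crossing, which is permitted by Lemma \ref{lm:crossChanges}.

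For part (ii) the plan is dual: I verify that $\Theta T'=\delta T$. Multiplying $T'$ (which by construction has no pole twists at all) by $\Theta$ introduces two separate closed loops each carrying one pole twist. Using the second closed pole loop relation (Proposition \ref{prop:addrel}(i)), one of these loops can be slid through $T'$ and positioned along the track that $q$ occupied in $T$; the first closed pole loop relation (vii) from Definition \ref{ch5main} is then invoked, one application per pole twist of $T$, to transfer a pole twist from the second loop of $\Theta$ onto each strand segment of $T'$ whose $T$-version carried a pole twist. The reducedness of $T$, together with Lemmas \ref{twotwists}, \ref{ch5once}, \ref{reducetoXi}, and \ref{isolate}, constrains the configuration of twists of $q$ and the other strands tightly enough that this sequence of transfers is unambiguous and terminates; the third closed pole loop relation (Proposition \ref{prop:addrel}(ii)) reconciles the result with $q$ when $q$ itself carries more than one pole twist, and any leftover plain closed loop again contributes a factor $\delta$ via (iv), matching the $\delta^{-1}$ on the right-hand side.

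The main obstacle is the bookkeeping in part (ii): one must specify a concrete order in which the pole twists of $T$ are to be reinstated from $\Theta T'$ and check, at each intermediate step, that the closed pole loop relations (vii), \ref{prop:addrel}(i), and \ref{prop:addrel}(ii) can indeed be applied without creating spurious pole-related self-intersections or violating reducedness. The structural lemmas listed above are precisely what make such an ordering possible, so the delicate task reduces to verifying that the parities of twists on $q$ and on the other strands of $T$ line up with the two twists carried by $\Theta$ after all transfers are performed; this is where the hypothesis $T\in\KT(\D_n)^{(2)}$ (even parity of pole twists) enters essentially.
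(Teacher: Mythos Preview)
Your argument is the paper's proof run backwards: for part~(i) the paper inserts a plain closed loop via the idempotent relation (factor $\delta^{-1}$) and uses the pole-related self-intersection relations (v)/(vi) to push the self-intersection onto that loop, turning it into $\Xi^{\pm}$; for part~(ii) the paper applies Proposition~\ref{prop:addrel}(ii) directly to $T$ to strip the $2r+1$ remaining twists and extract $\delta^{-1}\Theta$. So the strategy and toolkit match.

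One correction to your part~(ii): relation~(vii) does not by itself ``transfer a pole twist from the second loop of $\Theta$ onto each strand segment''; a single closed loop around the pole only lets you \emph{move} existing twists among strands, not manufacture new ones. The relation that freely creates or removes a twist on any accessible strand is the third closed pole loop relation, Proposition~\ref{prop:addrel}(ii), and it requires \emph{both} closed loops of $\Theta$ to be present simultaneously. That relation is therefore the primary engine for reinstating the $2r+1$ twists (with the last application converting the two loops back into the single loop $q$ and producing the factor $\delta$), not merely a reconciliation step for $q$'s own twists.
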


\begin{proof}
(i).  By assumption, part of the tangle $T$ is similar to one of the partial
diagrams shown in the first or second pole-related self-intersection relations
(v), (vi) of Definition~\ref{ch5main}.

For the second pole-related self-intersection relation (vi), consider
Figure~\ref{ch5xidef}. A closed loop with no pole twists can be brought into
the tangle by applying the idempotent relation (iv) backwards. Now the
self-intersection relations (iii) allow moving the pole-related
self-intersection inside this loop, obtaining the tangle $T'$ as described
above.

For the first pole-related self-intersection relation (v), take the closed
loop not twisting around the pole and let it twist around the pole twice and a
similar picture gives the result.

\nl (ii). Besides the single pole twist of $q$, there are an odd number, say
$2r+1$, of pole twists in $T$. As in the proof of Lemma
\ref{(0,0)-tangles}, the second equality of the third closed pole relation,
Proposition~\ref{prop:addrel}(ii), allows us to replace one of the $2r+1$
twists by $\delta^{-1}$ and a second loop around the pole.  Now the further
equalities of the third closed pole relation can be used to remove the $2r$
remaining pole twists and we obtain the desired tangle.
\end{proof}

\begin{figure}[htbp]
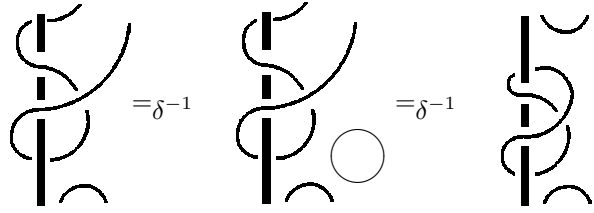

\unitlength 0.7mm
% [inline block 8: 1 envs, 30938 chars -> data_tex | \begin{picture}(114.65,47)(0,0) \linethickness{1mm}...]

\caption{Replacing a pole-related  self-intersection by $\delta^{-1}\Xi^+$} 
\label{ch5xidef}
\end{figure}

\begin{Thm} \label{tottangle}
As an $R$-module, $\KT(\D_n)$ is spanned by reduced tangles $T$
with each strand twisting around the pole at most once
satisfying one of the following three properties.

\nl
\begin{enumerate}[{\rm  (i)}] \item The tangle $T$ contains no closed
strands at all.
\item $T= \Xi^{\pm}T'$ where $T'$
is a tangle with a horizontal strand but no closed strand.
\item $T = \Theta T'$ where $T'$ is a tangle with a horizontal strand
containing neither a closed strand nor a strand twisting around the pole.
\end{enumerate}
\end{Thm}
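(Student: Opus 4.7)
The plan is to combine the rewriting results developed in Section~\ref{sec:totallyDescTangles} to reduce an arbitrary spanning element to one of the three listed forms. By Lemma~\ref{totspan} and Definition~\ref{df:complexity}, $\KT(\D_n)$ is spanned as an $R$-module by its reduced, totally descending tangles, so it suffices to take such a tangle $T$ of type $\D$ and display it in the required shape, modulo factors of $\delta$ and $l^{\pm 1}$ absorbed from closed strands that do not encircle the pole (these being removed using Proposition~\ref{closedloops} and the idempotent relation).

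First I would eliminate any pole-related self-intersection. By Remark~\ref{rmk:onetwist} a reduced tangle carries at most one of these, and when one is present, Proposition~\ref{ch5rewrite}(i) gives $T=\delta^{-1}\Xi^{\pm}T'$, with $T'$ the tangle obtained by excising the self-intersection. Next I would handle any remaining closed strand that twists around the pole: by Lemma~\ref{isolate} such a strand can be converted to one of $\Xi^+$, $\Xi^-$, or $\Theta$ while leaving the rest of the tangle unchanged. When the converted strand is $\Theta$, Proposition~\ref{ch5rewrite}(ii) further yields $T=\delta^{-1}\Theta T''$ with $T''$ free of the closed strand and of every pole twist on the other strands. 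Any accumulated $\Xi^{\pm}$ and $\Theta$ factors commute with the rest of $T$ by Lemma~\ref{ch5lmcommute}, and repeated use of the identities (\ref{ch5eQ1})--(\ref{ch5eQ4}) of Lemma~\ref{ch5KT0} collapses any product of such factors to a scalar multiple of a single $\Xi^{\pm}$, a single $\Theta$, or nothing.

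Once the outer $\Xi^{\pm}$ or $\Theta$ (if any) has been extracted, the residual tangle $T'$ carries no closed strand encircling the pole and no pole-related self-intersection. Lemma~\ref{ch5once} then forces every strand of $T'$ to twist around the pole at most once; in case (iii) the stronger conclusion that no strand of $T''$ twists around the pole is built into Proposition~\ref{ch5rewrite}(ii). The horizontal-strand requirements in (ii) and (iii) follow because, by $T\in\KT(\D_n)$ and Definition~\ref{df:tangletype}, the presence in $T$ of a closed strand twisting around the pole (supplied by the extracted $\Xi^{\pm}$ or $\Theta$) forces a horizontal strand into $T$, which must lie in $T'$ (respectively $T''$) since the extracted factor is a closed $(0,0)$-tangle with no endpoints of its own.

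The main obstacle will be the bookkeeping needed to show that the successive rewritings can be chained without undoing previous reductions and without leaving the subalgebra $\KT(\D_n)$. In particular, one must verify that after extracting $\Xi^{\pm}$ from a pole-related self-intersection, the residual $T'$ still meets the hypotheses of Lemma~\ref{isolate} and Lemma~\ref{ch5once}, and that the Reidemeister moves used in Lemmas~\ref{evacuate} and~\ref{isolate} neither introduce new closed strands around the pole nor produce new pole-related self-intersections. Because the commutation results of Lemma~\ref{ch5lmcommute} let us move $\Xi^{\pm}$ and $\Theta$ factors freely past the rest of the tangle, these checks reduce to local geometric considerations rather than substantially new arguments.
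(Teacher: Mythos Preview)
Your proposal is correct and follows essentially the same approach as the paper: start from reduced totally descending tangles, invoke Proposition~\ref{ch5rewrite} to extract the $\Xi^{\pm}$ or $\Theta$ factor (with Lemmas~\ref{ch5once} and~\ref{isolate} feeding into it), use the type~$\D$ condition to supply the horizontal strand in cases (ii) and (iii), and strip any remaining poleless closed strands via Proposition~\ref{closedloops}. The paper's own proof is extremely terse and simply points to Proposition~\ref{ch5rewrite} and the type~$\D$ requirement; you have unpacked the same argument with more explicit citations of the supporting lemmas (including~\ref{ch5KT0} and~\ref{ch5lmcommute}) and more attention to the order of operations, but there is no substantive difference in strategy.
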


\begin{proof}
By induction on complexity, there is a spanning set consisting of reduced
tangles.  Let $T$ be a member of a minimal spanning set of reduced tangles.
If there are no crossings or twists, the tangle is of the required form.  The
three distinct shapes (i), (ii), (iii) are due to the rewrite rules of
Proposition~\ref{ch5rewrite} and the requirement that the tangle be of type
$\D$.

Without loss of generality we can assume that the base point and orientation
on $\Xi^{+}$ are chosen in such a way that the positive self-intersection is
totally descending. Other configurations of closed loops around the pole are
already shown to be rewritable to the ones listed here. So we only have to
show that all these tangles have no closed strands without pole twists.
But such closed strands can
be removed as we saw in Proposition \ref{closedloops}.
\end{proof}

We are now ready for one of the main results which is half of
Theorem~\ref{th:main} (\ref{BMW2KTiso}).

\begin{Thm}\label{th:surjhomo}
The map $\phi:\BMW(\D_n)\to \KT(\D_n)$ of Proposition~\ref{homom} is
surjective.
\end{Thm}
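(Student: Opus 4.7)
The plan is to exploit Theorem~\ref{tottangle}, which already supplies an explicit $R$-module spanning set for $\KT(\D_n)$, and to realize each member of that spanning set using the generators $G_i,E_i$. By Theorem~\ref{tottangle}, $\KT(\D_n)$ is spanned by reduced tangles of three shapes: (i) no closed strands, each strand twisting at most once around the pole; (ii) $\Xi^{\pm}T'$, with $T'$ having a horizontal strand but no closed strand; and (iii) $\Theta T'$, with $T'$ having a horizontal strand and no pole twists or closed strands. It suffices to place each of these three families inside $\im(\phi)$.

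The starting point is the classical piece. By Remark~\ref{def:Gi}(i) the subalgebra of $\KT(\D_n)$ generated by $G_2,\ldots,G_n,E_2,\ldots,E_n$ is isomorphic, under removal of the pole, to the classical Kauffman tangle algebra $\KT(\A_{n-1})$, and the Morton--Wasserman theorem \cite{MorWas} identifies this with the image of $\BMW(\A_{n-1})$ under the restriction of $\phi$. Consequently every reduced tangle with no pole twists and no closed strands already lies in $\im(\phi)$; this settles the unpoled cases of (i) and takes care of the factor $T'$ in (iii).

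To handle type~(i) tangles that do carry pole twists, I would argue by induction on the total number of pole twists (which is even in $\KT(\D_n)$). A pair of strands each carrying a single pole twist is produced by inserting a copy of $E_1$, whose two horizontal strands supply exactly this data, and rerouting the four free endpoints to the required positions by post- and pre-multiplying by suitable type-$\A$ generators; crossings between pole-twisted strands are produced analogously from $G_1$. When the pair of pole twists is needed at positions other than $\{1,2\}$, one combines this with $G_0$-conjugation, which by Remark~\ref{def:Gi}(ii) preserves the subalgebra generated by $G_1,\ldots,G_n,E_1,\ldots,E_n$ (it swaps $G_1\leftrightarrow G_2$, $E_1\leftrightarrow E_2$ and fixes $G_i$ for $i\ge 3$). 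Each such insertion strictly decreases the number of pole-twisted strands, so induction completes~(i).

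Finally, for shapes~(ii) and (iii) it remains to exhibit $\Xi^{+}$, $\Xi^{-}$ and $\Theta$ themselves as elements of $\im(\phi)$, after which Lemma~\ref{xicommutes} lets these closed-strand factors be freely multiplied by any $T'$ already handled. The element $\Theta\cdot E_2$ (which lies in $\KT(\D_n)$ for $n\ge 2$) can be produced by combining copies of $E_1$ with intermediate type-$\A$ pieces so that the two pole-twist-carrying horizontal strands of $E_1$ close off and, via the closed pole loop relations in Proposition~\ref{prop:addrel}(ii), detach into the two separate loops of $\Theta$; the corresponding construction for $\Xi^{\pm}$ proceeds by first creating a pole-related self-intersection via the Kauffman skein relation applied to $G_1^2$ (which by (D1) equals $1-mG_1+ml^{-1}E_1$) and then applying Lemma~\ref{ch5KT0} to extract the factor $\Xi^{\pm}$. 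The main obstacle is precisely this last step: tracking the combinatorics of pole twists and pole-related self-intersections carefully enough to confirm that $\Xi^{\pm}$ and $\Theta$ can be realized inside the type-$\D$ subalgebra $\KT(\D_n)$, rather than only inside the ambient $\KT(\D_n)^{(1)}$, and verifying compatibility with all of the pole-sensitive relations of Definition~\ref{ch5main}.
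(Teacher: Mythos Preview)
Your overall architecture---invoke Theorem~\ref{tottangle} and realize each member of the spanning set via the $G_i,E_i$---matches the paper, but two of your three steps have genuine gaps.

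\medskip

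\textbf{Handling the closed-strand factors (your step~4).} You acknowledge that producing $\Xi^{\pm}$ and $\Theta$ inside $\KT(\D_n)$ is the obstacle, and your sketch via $G_1^2$ and closed pole loop relations does not close this gap. The paper's idea, which you are missing, is to exploit the horizontal strand that $T'$ already possesses in shapes~(ii) and~(iii). After conjugating $T'$ by suitable $G_i$ (and $G_0$ if needed) so that an undecorated horizontal strand joins endpoints $1$ and $2$ at the top, premultiplication by the explicit elements $E_2G_1E_2$ (for $\Xi^{\pm}$) or $E_2E_1E_2$ (for $\Theta$) manufactures the closed-strand factor directly. This is much cleaner than trying to extract $\Xi^{\pm}$ from skein manipulations.

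\medskip

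\textbf{Handling pole-twisted strands in type~(i) (your step~3).} Your ``induction on the number of pole twists'' is not made precise: inserting a copy of $E_1$ introduces \emph{horizontal} pole-twisted strands, which alters the connectivity of the tangle rather than removing a twist from an existing vertical strand; and you do not explain how the composition with $E_1$ or $G_1$ produces a reduced tangle with strictly fewer pole twists. The paper does not induct on pole twists at all. Instead it inducts on $n$: given a reduced $T$ with no closed strands, it uses products of $G_i$ ($i\ge 2$) and $G_0$-conjugation to arrange that some strand bounds a region of type~E(3), E(4), or E(6), evacuates it via Lemma~\ref{evacuate}, and thereby peels off a strand to obtain an $(n-1,n-1)$- or $(n-2,n-2)$-tangle. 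The base case $n=2$ is handled by parity. Your appeal to Morton--Wasserman for the unpoled case is fine, but it does not by itself supply a mechanism for removing pole twists from vertical strands, which is where the evacuation-and-peel argument is essential.
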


\begin{proof}
Recall that $n\geq 2$.  We need to show that an arbitrary reduced tangle
is a monomial in the generators $G_i^{\pm 1}$, $E_i$ for $i = 1, \ldots ,n$.

Suppose that the result is proved for reduced tangles without closed strands
and let $T$ be as in (ii) or (iii) Theorem \ref{tottangle}. Then the tangle
$T'$ in this part of the theorem is in the image of $\phi$.  We can conjugate
such a $T'$ by a suitable series of $G_i$'s so a horizontal strand joins the
endpoints $1$ to $2$ at the top.  By Remark \ref{def:Gi}(ii), conjugation by
$G_0$ is an isomorphism of the algebra generated by these generators, so we
can conjugate by $G_0$ if necessary to reduce to the case where the strand
does not go around the pole (here we use the fact, given by the theorem, that
each strand of $T$ twists around the pole at most once).

Now premultipy by $E_2G_1E_2$ or $E_2E_1E_2$ to get the factor
$\Xi^{\pm}$ or $\Theta$ and so $T$ is also in the image of $\phi$.

Therefore, by Theorem \ref{tottangle}, we may assume that $T$ has no closed
strands.  Suppose first that there is a vertical strand going from the top to
the bottom.  If it happens to twist around the pole, it goes around the pole
exactly once, by Theorem \ref{tottangle}. Now multiply on the left and the
right by products of $G_i$, with $i\ge 2$ to get that $1$ on the top goes to
$2$ on the bottom and of course still around the pole.  Now conjugate by $G_0$
to get a vertical strand from $1$ on the top to $2$ on the bottom that does
not twist around the pole.

Suppose then that there is a vertical strand that does not twist around the
pole.  Now multiply again on the left and the right by products of $G_i$, with
$i\ge 2$ to get that $n$ on the top goes to $n$ on the bottom.  Now this
strand forms a region of type E(3), cf.~Definition \ref{def:typeE}, and so can
be evacuated by Lemma \ref{evacuate}.  The result is an $(n-1,n-1)$-tangle 
$T_1$ with $T = \eps(T_1)$ and
we can use induction unless $n=2$.  Suppose $n=2$.  As there are an even
number of twists around the pole we see the other strand must join $1$ on the
top to $1$ on the bottom without twisting around the pole.  As there are no
horizontal strands and the element is in $\KT(\D_n)$ there are no closed loops
twisting around the pole and so $T$ is the identity, 
and so belongs to the image of $\phi$.

The only other possibility is that all strands are horizontal.  Suppose that
there are horizontal strands on the top and on the bottom that do not go
around the pole.  Act by $G_i$, $i\ge 2$ to get strands from the endpoint
$n-1$ on the bottom to $n$ on the bottom and the same for the top.  Now these
strands bound regions of type E(4) and can be evacuated by Lemma
\ref{evacuate}.  This leaves an $(n-2,n-2)$-tangle $T_2$ with $T = T_2 E_n$,
and we use induction on $n$ for $T_2$ to conclude that $T_2$ and hence $T$
belongs to the image of $\phi$.

Suppose that all horizontal strands on the top twist around the pole.  Pick
the strand that twists around the pole closest to the top and let $k$ be the
endpoint on the top where it starts.  Multiplying from the left by suitable
$G_i$ as before, we can move the $k$ to $1$.  Now the region between the
strand from $1$ on the top to the pole is a region of type E(6) and can be
evacuated by Lemma \ref{evacuate}.  Now conjugate $T$ by $G_0$ to obtain that
the horizontal strand from $1$ on the top does not twist around the pole.  If
$n=2$, then $T$, having an even number of pole twists, must be $E_2$ and we
are done. Suppose therefore, $n\ge3$.  If now all strands on the bottom twist
around the pole, we multiply from the right by suitable $G_i$ to obtain a
horizontal strand from the endpoint 1 at the bottom joint to the first pole
twist from the bottom. After evacuation as before, in order not to relapse
into the first horizontal strand at the top having a pole twist, we shift this
strand away from $1$, by premultiplying $T$ with $G_2G_3$ before conjugation
by $G_0$. We now have horizontal lines on the top and bottom not going around
the pole and we can apply the results of the previous paragraph.
\end{proof}

\begin{Remarks}\label{rmks:AD}
\rm (i).
The full algebra $\KT(\D_n)^{(2)}$ is generated by all $G_i$, $E_i$, for
$i = 1,\ldots n$, and the element $\rho$, consisting of one closed loop around
the pole and vertical strands without crossings of which the one at the far
left also has a pole twist.  See Figure~\ref{pic:rho}.

\rm (ii).  The algebra $\KT(\D_n)$ contains the algebra $\KT(\A_{n-1})$
consisting of tangles which do not go around the pole.  The elements $g_i,\
e_i$ of $\BMW(\D_n)$ for $i\geq 2$ satisfy the relations for $\BMW(\A_{n-1})$
with the usual index increased by~$1$.  The proof of Theorem~\ref{th:surjhomo}
also shows that the map $\phi$ when restricted to $\BMW(\A_{n-1})$ is a
surjective homomorphism onto $\KT(\A_{n-1})$. This was proved by similar means
in \cite{MorWas}.

\rm (iii).  By Proposition \ref{ch5rewrite} and Lemma \ref{ch5KT0},
we can rewrite all closed tangles in $\KT(\D_0)^{(2)}$ to
$\Z[\delta^{\pm1}]$-linear combinations of the identity, $\Xi^+$, and
$\Theta$.
\end{Remarks}

\section{The Brauer diagram algebra of type $\D$}
\label{sec:BrauerTypeD}
In this section we introduce a variation of the original Brauer algebra using
$(n,n)$-tangles of type $\D$.  It involves a variation of the $n$-connectors,
known from the Brauer algebras in their classical sense, cf.~\cite{Bra}, which
we will recall first. The Brauer diagrams of type $\D_n$, to be introduced in
Definition \ref{df:BrDalg}, encompass both the standard diagrammatic
description of elements of the Weyl group $W(\D_n)$ and the diagrammatic
elements of the (generalized) Temperley-Lieb algebra of type $\D_n$ introduced
by Green in \cite{Gre}. Our goals are to show that the algebra defined on
linear combinations of Brauer diagrams of type $\D_n$ is isomorphic to the
Brauer algebra of type $\D_n$ as defined in \cite{CFW} and to utilize this
result to prove Theorem \ref{th:main}. For most of this final section, we use
tangles and related diagrams of type $\D_n$, but at the end we discuss an
alternative approach with the larger class of $(n,n)$ tangles of type
$\D^{(2)}$.

\begin{Def}\label{def:n-connector}
\rm
An $n$-{\em connector} is a pairing on $2n$ points into $n$ disjoint
pairs.
\end{Def}

\np These were used by Brauer in \cite{Bra} to define an algebra over
$\Z[\delta^{\pm1}]$, here called the Brauer diagram algebra.  We take a basis
of these $n$-connectors.  An $n$-connector can be pictured by a diagram.  The
$2n$ points are divided into two sets $\{1,2,\ldots,n\}$ and $\{\bar 1,\bar
2,\ldots,\bar n\}$ of points in the plane with each set on a horizontal line
and point $i\in\{1,\ldots,n\}$ above $\bar i$.  The points are connected by
$n$ strands in the plane as determined by the pairing.

The product $a_1a_2$, for two $n$-connectors $a_1$ and $a_2$, is defined by
stacking their diagrams with $a_1$ on top of $a_2$.  Now identify node $\bar
i$ of the bottom of $a_1$ with node $i$ of the top set of $a_2$, thus
connecting the strands of $a_1$ and $a_2$. We find a new pairing between the
nodes of the top of $a_1$ and the bottom of $a_2$.  Besides a pairing, this
can result in closed loops not connected to the top or bottom.  When a
composition gives $r$ closed loops, the new $n$-connector has coefficient
$\delta^r$.  So $a_1a_2=\delta^ra$ with $a$ the resulting $n$-connector.  This
multiplication is associative.  In fact, these diagrams can be considered to be
$(n,n)$-tangles in which the differences between over and under crossings are
completely ignored.  The multiplication is the same as the multiplication of
the tangle after identification of over and under crossings in the resulting
tangle. 
The classical Brauer diagram algebra, ${\BrD }(\A_{n-1})$, over
$\Z[\delta^{\pm1}]$ is the free $\Z[\delta^{\pm 1}]$-module on the set
of $n$-connectors with the described multiplication.

We will now begin with the analogue for type $\D$.

\begin{Def}\label{def:decoratednconnector}
\rm
A {\em decorated $n$-connector} is an $n$-connector in which an even
number of pairs is labelled $1$. All other pairs are labelled $0$.
A pair labelled $1$ will be called {\em decorated}.
Denote $T_n$ the set of all decorated $n$-connectors.
Denote $T^0_n$ the subset of $T_n$ of decorated $n$-connectors
with no decorations and denote $T^=_n$ the subset of $T_n$ of
decorated $n$-connectors with at least one horizontal pairing.
\end{Def}

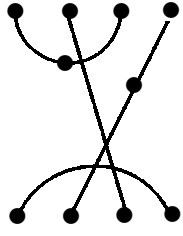
\begin{figure}[htbp]
\unitlength 0.7mm
\begin{picture}(32.25,40)(0,0)
\linethickness{0.3mm}
\put(21.74,1.21){\circle*{2.87}}

\linethickness{0.3mm}
\put(30.81,1.32){\circle*{2.87}}

\linethickness{0.3mm}
\put(21.12,39.96){\circle*{2.87}}

\linethickness{0.3mm}
\put(30.56,40.06){\circle*{2.88}}

\linethickness{0.3mm}
\multiput(1,39)(0.07,-0.49){1}{\line(0,-1){0.49}}
\multiput(1.07,38.51)(0.09,-0.49){1}{\line(0,-1){0.49}}
\multiput(1.16,38.02)(0.12,-0.48){1}{\line(0,-1){0.48}}
\multiput(1.27,37.53)(0.07,-0.24){2}{\line(0,-1){0.24}}
\multiput(1.41,37.05)(0.08,-0.24){2}{\line(0,-1){0.24}}
\multiput(1.57,36.58)(0.09,-0.23){2}{\line(0,-1){0.23}}
\multiput(1.76,36.12)(0.07,-0.15){3}{\line(0,-1){0.15}}
\multiput(1.97,35.67)(0.08,-0.15){3}{\line(0,-1){0.15}}
\multiput(2.2,35.23)(0.08,-0.14){3}{\line(0,-1){0.14}}
\multiput(2.45,34.8)(0.09,-0.14){3}{\line(0,-1){0.14}}
\multiput(2.72,34.38)(0.07,-0.1){4}{\line(0,-1){0.1}}
\multiput(3.02,33.98)(0.08,-0.1){4}{\line(0,-1){0.1}}
\multiput(3.33,33.59)(0.08,-0.09){4}{\line(0,-1){0.09}}
\multiput(3.66,33.22)(0.09,-0.09){4}{\line(0,-1){0.09}}
\multiput(4.01,32.86)(0.09,-0.08){4}{\line(1,0){0.09}}
\multiput(4.38,32.52)(0.1,-0.08){4}{\line(1,0){0.1}}
\multiput(4.76,32.21)(0.1,-0.07){4}{\line(1,0){0.1}}
\multiput(5.16,31.91)(0.14,-0.09){3}{\line(1,0){0.14}}
\multiput(5.57,31.63)(0.14,-0.09){3}{\line(1,0){0.14}}
\multiput(6,31.37)(0.15,-0.08){3}{\line(1,0){0.15}}
\multiput(6.44,31.13)(0.15,-0.07){3}{\line(1,0){0.15}}
\multiput(6.89,30.92)(0.23,-0.1){2}{\line(1,0){0.23}}
\multiput(7.35,30.73)(0.23,-0.08){2}{\line(1,0){0.23}}
\multiput(7.82,30.56)(0.24,-0.07){2}{\line(1,0){0.24}}
\multiput(8.29,30.41)(0.24,-0.06){2}{\line(1,0){0.24}}
\multiput(8.78,30.29)(0.49,-0.1){1}{\line(1,0){0.49}}
\multiput(9.26,30.19)(0.49,-0.07){1}{\line(1,0){0.49}}
\multiput(9.76,30.12)(0.5,-0.05){1}{\line(1,0){0.5}}
\multiput(10.25,30.07)(0.5,-0.02){1}{\line(1,0){0.5}}
\put(10.75,30.04){\line(1,0){0.5}}
\multiput(11.25,30.04)(0.5,0.02){1}{\line(1,0){0.5}}
\multiput(11.75,30.07)(0.5,0.05){1}{\line(1,0){0.5}}
\multiput(12.24,30.12)(0.49,0.07){1}{\line(1,0){0.49}}
\multiput(12.74,30.19)(0.49,0.1){1}{\line(1,0){0.49}}
\multiput(13.22,30.29)(0.24,0.06){2}{\line(1,0){0.24}}
\multiput(13.71,30.41)(0.24,0.07){2}{\line(1,0){0.24}}
\multiput(14.18,30.56)(0.23,0.08){2}{\line(1,0){0.23}}
\multiput(14.65,30.73)(0.23,0.1){2}{\line(1,0){0.23}}
\multiput(15.11,30.92)(0.15,0.07){3}{\line(1,0){0.15}}
\multiput(15.56,31.13)(0.15,0.08){3}{\line(1,0){0.15}}
\multiput(16,31.37)(0.14,0.09){3}{\line(1,0){0.14}}
\multiput(16.43,31.63)(0.14,0.09){3}{\line(1,0){0.14}}
\multiput(16.84,31.91)(0.1,0.07){4}{\line(1,0){0.1}}
\multiput(17.24,32.21)(0.1,0.08){4}{\line(1,0){0.1}}
\multiput(17.62,32.52)(0.09,0.08){4}{\line(1,0){0.09}}
\multiput(17.99,32.86)(0.09,0.09){4}{\line(0,1){0.09}}
\multiput(18.34,33.22)(0.08,0.09){4}{\line(0,1){0.09}}
\multiput(18.67,33.59)(0.08,0.1){4}{\line(0,1){0.1}}
\multiput(18.98,33.98)(0.07,0.1){4}{\line(0,1){0.1}}
\multiput(19.28,34.38)(0.09,0.14){3}{\line(0,1){0.14}}
\multiput(19.55,34.8)(0.08,0.14){3}{\line(0,1){0.14}}
\multiput(19.8,35.23)(0.08,0.15){3}{\line(0,1){0.15}}
\multiput(20.03,35.67)(0.07,0.15){3}{\line(0,1){0.15}}
\multiput(20.24,36.12)(0.09,0.23){2}{\line(0,1){0.23}}
\multiput(20.43,36.58)(0.08,0.24){2}{\line(0,1){0.24}}
\multiput(20.59,37.05)(0.07,0.24){2}{\line(0,1){0.24}}
\multiput(20.73,37.53)(0.12,0.48){1}{\line(0,1){0.48}}
\multiput(20.84,38.02)(0.09,0.49){1}{\line(0,1){0.49}}
\multiput(20.93,38.51)(0.07,0.49){1}{\line(0,1){0.49}}

\linethickness{0.3mm}
\multiput(11,39)(0.08,-0.27){138}{\line(0,-1){0.27}}
\linethickness{0.3mm}
\multiput(12,2)(0.08,0.16){225}{\line(0,1){0.16}}
\linethickness{0.3mm}
\multiput(29.76,2.44)(0.08,-0.15){3}{\line(0,-1){0.15}}
\multiput(29.51,2.87)(0.08,-0.14){3}{\line(0,-1){0.14}}
\multiput(29.24,3.29)(0.09,-0.14){3}{\line(0,-1){0.14}}
\multiput(28.96,3.7)(0.09,-0.14){3}{\line(0,-1){0.14}}
\multiput(28.67,4.1)(0.07,-0.1){4}{\line(0,-1){0.1}}
\multiput(28.37,4.5)(0.08,-0.1){4}{\line(0,-1){0.1}}
\multiput(28.05,4.88)(0.08,-0.1){4}{\line(0,-1){0.1}}
\multiput(27.72,5.26)(0.08,-0.09){4}{\line(0,-1){0.09}}
\multiput(27.38,5.62)(0.09,-0.09){4}{\line(0,-1){0.09}}
\multiput(27.03,5.97)(0.09,-0.09){4}{\line(0,-1){0.09}}
\multiput(26.67,6.31)(0.09,-0.09){4}{\line(1,0){0.09}}
\multiput(26.29,6.64)(0.09,-0.08){4}{\line(1,0){0.09}}
\multiput(25.91,6.96)(0.1,-0.08){4}{\line(1,0){0.1}}
\multiput(25.52,7.27)(0.1,-0.08){4}{\line(1,0){0.1}}
\multiput(25.11,7.56)(0.1,-0.07){4}{\line(1,0){0.1}}
\multiput(24.7,7.84)(0.1,-0.07){4}{\line(1,0){0.1}}
\multiput(24.28,8.11)(0.14,-0.09){3}{\line(1,0){0.14}}
\multiput(23.85,8.36)(0.14,-0.08){3}{\line(1,0){0.14}}
\multiput(23.42,8.6)(0.15,-0.08){3}{\line(1,0){0.15}}
\multiput(22.97,8.83)(0.15,-0.08){3}{\line(1,0){0.15}}
\multiput(22.52,9.04)(0.15,-0.07){3}{\line(1,0){0.15}}
\multiput(22.07,9.24)(0.23,-0.1){2}{\line(1,0){0.23}}
\multiput(21.6,9.42)(0.23,-0.09){2}{\line(1,0){0.23}}
\multiput(21.14,9.59)(0.23,-0.08){2}{\line(1,0){0.23}}
\multiput(20.66,9.75)(0.24,-0.08){2}{\line(1,0){0.24}}
\multiput(20.18,9.89)(0.24,-0.07){2}{\line(1,0){0.24}}
\multiput(19.7,10.01)(0.24,-0.06){2}{\line(1,0){0.24}}
\multiput(19.21,10.12)(0.49,-0.11){1}{\line(1,0){0.49}}
\multiput(18.73,10.21)(0.49,-0.09){1}{\line(1,0){0.49}}
\multiput(18.23,10.29)(0.49,-0.08){1}{\line(1,0){0.49}}
\multiput(17.74,10.35)(0.49,-0.06){1}{\line(1,0){0.49}}
\multiput(17.24,10.4)(0.5,-0.05){1}{\line(1,0){0.5}}
\multiput(16.75,10.43)(0.5,-0.03){1}{\line(1,0){0.5}}
\multiput(16.25,10.45)(0.5,-0.02){1}{\line(1,0){0.5}}
\put(15.75,10.45){\line(1,0){0.5}}
\multiput(15.25,10.43)(0.5,0.02){1}{\line(1,0){0.5}}
\multiput(14.76,10.4)(0.5,0.03){1}{\line(1,0){0.5}}
\multiput(14.26,10.35)(0.5,0.05){1}{\line(1,0){0.5}}
\multiput(13.77,10.29)(0.49,0.06){1}{\line(1,0){0.49}}
\multiput(13.27,10.21)(0.49,0.08){1}{\line(1,0){0.49}}
\multiput(12.79,10.12)(0.49,0.09){1}{\line(1,0){0.49}}
\multiput(12.3,10.01)(0.49,0.11){1}{\line(1,0){0.49}}
\multiput(11.82,9.89)(0.24,0.06){2}{\line(1,0){0.24}}
\multiput(11.34,9.75)(0.24,0.07){2}{\line(1,0){0.24}}
\multiput(10.86,9.59)(0.24,0.08){2}{\line(1,0){0.24}}
\multiput(10.4,9.42)(0.23,0.08){2}{\line(1,0){0.23}}
\multiput(9.93,9.24)(0.23,0.09){2}{\line(1,0){0.23}}
\multiput(9.48,9.04)(0.23,0.1){2}{\line(1,0){0.23}}
\multiput(9.03,8.83)(0.15,0.07){3}{\line(1,0){0.15}}
\multiput(8.58,8.6)(0.15,0.08){3}{\line(1,0){0.15}}
\multiput(8.15,8.36)(0.15,0.08){3}{\line(1,0){0.15}}
\multiput(7.72,8.11)(0.14,0.08){3}{\line(1,0){0.14}}
\multiput(7.3,7.84)(0.14,0.09){3}{\line(1,0){0.14}}
\multiput(6.89,7.56)(0.1,0.07){4}{\line(1,0){0.1}}
\multiput(6.48,7.27)(0.1,0.07){4}{\line(1,0){0.1}}
\multiput(6.09,6.96)(0.1,0.08){4}{\line(1,0){0.1}}
\multiput(5.71,6.64)(0.1,0.08){4}{\line(1,0){0.1}}
\multiput(5.33,6.31)(0.09,0.08){4}{\line(1,0){0.09}}
\multiput(4.97,5.97)(0.09,0.09){4}{\line(1,0){0.09}}
\multiput(4.62,5.62)(0.09,0.09){4}{\line(0,1){0.09}}
\multiput(4.28,5.26)(0.09,0.09){4}{\line(0,1){0.09}}
\multiput(3.95,4.88)(0.08,0.09){4}{\line(0,1){0.09}}
\multiput(3.63,4.5)(0.08,0.1){4}{\line(0,1){0.1}}
\multiput(3.33,4.1)(0.08,0.1){4}{\line(0,1){0.1}}
\multiput(3.04,3.7)(0.07,0.1){4}{\line(0,1){0.1}}
\multiput(2.76,3.29)(0.09,0.14){3}{\line(0,1){0.14}}
\multiput(2.49,2.87)(0.09,0.14){3}{\line(0,1){0.14}}
\multiput(2.24,2.44)(0.08,0.14){3}{\line(0,1){0.14}}
\multiput(2,2)(0.08,0.15){3}{\line(0,1){0.15}}

\linethickness{0.3mm}
\put(11.56,1){\circle*{2.87}}

\linethickness{0.3mm}
\put(1.44,1){\circle*{2.87}}

\linethickness{0.3mm}
\put(23.56,26){\circle*{2.87}}

\linethickness{0.3mm}
\put(11.43,40){\circle*{2.87}}

\linethickness{0.3mm}
\put(1,40){\circle*{2.87}}

\linethickness{0.3mm}
\put(10.43,30){\circle*{2.87}}

\end{picture}
\caption{A decorated $4$-connector with $2$
decorated strands} \label{ch5brauerex}
\end{figure}

We shall shortly (Definition~\ref{df:BrDalg}) describe an algebra,
$\BrD(\D_n)$, using certain decorated $n$-connectors together with some new
parameters; we call it the Brauer diagram algebra of type $\D_n$. This algebra
will turn out to be the image under the map $\psi$ assigning Brauer diagrams
to tangles of type $\D_n$ by identifying over and under crossings,
cf.~Proposition \ref{homomKTDn2CDn}.  The definition will involve some
collections of decorated $n$-connectors.  For these purposes we need the sets
$T^0_n$ and $T^=_n$ defined above.  Recall that $n!! = (2n-1)(2n-3) \cdots 3
\cdot 1$, the product of the first $n$ odd integers.

\begin{Lm} \label{ch5Fn}
The sizes of the sets just defined are as follows.
$|T_n| = 2^{n-1}n!!$,
$|T^0_n| = n!!$, and $|T^=_n| = 2^{n-1}(n!!-n!)$. Moreover,
$|T^=_n \cap T^0_n| = n!!-n!$.
\end{Lm}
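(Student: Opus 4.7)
The plan is to decompose each count into two independent choices: first the underlying $n$-connector (an undecorated pairing of $2n$ points), and then the decoration pattern (a subset of the $n$ pairs of even size). Each factor can be counted by a short combinatorial argument.

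First I would recall the classical fact that the number of pairings of $2n$ points into $n$ disjoint pairs equals $(2n-1)(2n-3)\cdots 3\cdot 1 = n!!$. This immediately gives $|T^0_n| = n!!$, since $T^0_n$ is by definition the set of undecorated $n$-connectors. Next, for an arbitrary decorated $n$-connector one chooses an underlying pairing in $n!!$ ways and a subset of even cardinality among its $n$ pairs to label with $1$. The number of even-size subsets of an $n$-set is $\sum_{k\ \mathrm{even}}\binom{n}{k} = 2^{n-1}$, which yields $|T_n| = 2^{n-1}n!!$.

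For $|T_n^{=}|$, the key observation is that an $n$-connector has \emph{no} horizontal pair precisely when every pair joins a top node to a bottom node, i.e.\ the connector corresponds to a permutation in the symmetric group on $n$ letters; there are exactly $n!$ such. Hence the number of underlying $n$-connectors having at least one horizontal pair is $n!!-n!$, and applying the same $2^{n-1}$ factor for even-size decoration patterns gives $|T^=_n| = 2^{n-1}(n!!-n!)$. Finally, $T^=_n \cap T^0_n$ consists of undecorated connectors with at least one horizontal pair, so the count is $n!!-n!$.

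There is really no obstacle here; the argument is a direct product decomposition, and the only fact worth recording explicitly is the elementary identity $\sum_{k\ \mathrm{even}}\binom{n}{k}=2^{n-1}$ (valid for $n\ge 1$, which is the range we need since $n\ge 2$ in Theorem~\ref{th:main}). The proof should therefore be short and purely enumerative.
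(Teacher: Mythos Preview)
Your proof is correct and follows essentially the same approach as the paper: both decompose each count into the choice of an underlying pairing (counted by $n!!$, or $n!$ in the no-horizontal case) times the number of even decorations. The only cosmetic difference is that the paper obtains the factor $2^{n-1}$ by noting that the first $n-1$ labels are free and the last is forced by parity, whereas you invoke the identity $\sum_{k\ \mathrm{even}}\binom{n}{k}=2^{n-1}$; these are of course the same observation.
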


\begin{proof}
The $2n$ points can be paired in $n!!$ ways, so $|T^0_n| = n!!$.
Each strand can randomly be labelled $0$ or $1$, except for the
last one which is obliged to make the total number of decorations
even. Hence, $|T_n| = 2^{n-1}n!!$.

\np To obtain a decorated $n$-connector without horizontal strands the $2n$
points can be paired in $n!$ ways. So there are $n!!-n!$ pairings which have
at least one horizontal pair. These are all decorated $n$-connectors with at
least one horizontal strand but without decorations, so $|T^=_n \cap T^0_n| =
n!!-n!$. Again, these pairings can be decorated in $2^{n-1}$ ways, so
$|T^=_n|=2^{n-1}(n!!-n!)$. \end{proof}

For Brauer diagrams of type $\D$, the role of the group $\la\delta^{\pm1}\ra$
for Brauer diagrams of type $\A$ will to some extent be taken over by the
commutative monoid with presentation
$$\Lambda = \la\delta^{\pm1}, \xi,\theta\mid \xi^2=\delta^2, \xi\theta =
\delta\theta, \theta^2 = \delta^2\theta \ra.$$ In particular, we allow for
decorated $n$-connectors to be multiplied with coefficients (central elements)
from $\Lambda$.
Notice $\Lambda = \la\delta^{\pm1}\ra\{1,\xi,\theta\}$.

\begin{Def}\label{df:BrDalg}
\rm A {\em Brauer diagram} of type $\D_n$ is the scalar multiple of a
decorated $n$-connector with an element of $\Lambda$ belonging to $\la
\delta^{\pm1}\ra \left(T_n\cup \xi
T^=_n\cup\theta (T^0_n\cap T^=_n)\right)$.
\label{df:BrD}
The {\em Brauer diagram algebra} of type $\D_n$, notation $\BrD(\D_n)$, is the
$\Z[\delta^{\pm1}]$-linear span of all Brauer diagrams of type $\D_n$ with
multiplication defined by $\Z[\delta^{\pm1}]$-bilinear extension of the
multiplication of two Brauer diagrams $\lambda_1 f_1$ and $\lambda_2 f_2$,
with $\lambda_1$, $\lambda_2\in\{1,\xi,\theta\}$ and $f_1$, $f_2$ decorated
$n$-connectors, determined by the following five steps. Here, the product
$\lambda_1 f_1\lambda_2f_2$ is of the form $\lambda f$ where $f$ is a
  decorated $n$-connector and $\lambda\in\Lambda$.

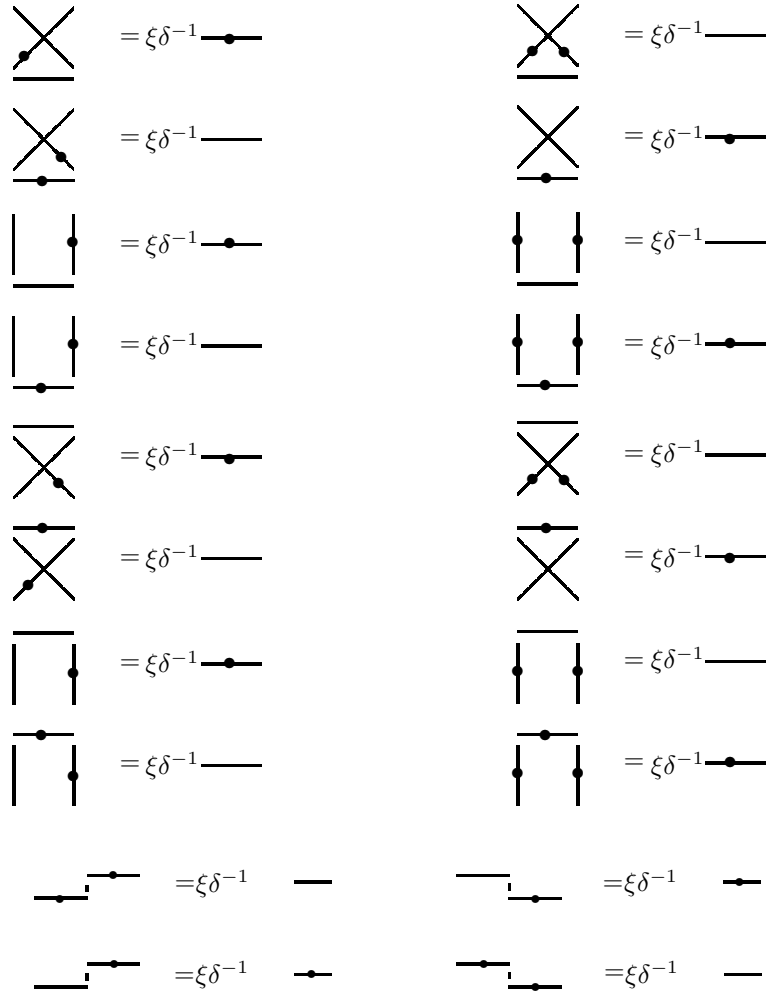
\begin{figure}[htbp]
\unitlength .8mm
\begin{picture}(138.88,63.38)(0,0)
\linethickness{0.3mm}
\put(1.23,18.62){\line(0,1){10}}
\linethickness{0.3mm}
\put(11.23,18.62){\line(0,1){10}}
\linethickness{0.3mm}
\put(1.23,16.74){\line(1,0){10}}
\linethickness{0.3mm}
\put(11,24){\circle*{1.7}}

\linethickness{0.3mm}
\multiput(1.23,53)(0.08,0.08){125}{\line(1,0){0.08}}
\linethickness{0.3mm}
\multiput(1.23,63)(0.08,-0.08){125}{\line(1,0){0.08}}
\linethickness{0.3mm}
\put(1.23,51.12){\line(1,0){10}}
\linethickness{0.3mm}
\put(32.49,58){\line(1,0){10}}
\put(20.61,58){\makebox(0,0)[cc]{$=$}}

\linethickness{0.3mm}
\put(37.09,57.77){\circle*{1.7}}

\linethickness{0.3mm}
\put(3,55){\circle*{1.7}}

\linethickness{0.3mm}
\multiput(1.23,36.12)(0.08,0.08){125}{\line(1,0){0.08}}
\linethickness{0.3mm}
\multiput(1.23,46.12)(0.08,-0.08){125}{\line(1,0){0.08}}
\linethickness{0.3mm}
\put(1.23,34.24){\line(1,0){10.01}}
\linethickness{0.3mm}
\put(32.49,41.12){\line(1,0){10}}
\put(20.61,41.12){\makebox(0,0)[cc]{$=$}}

\linethickness{0.3mm}
\put(6.01,34.25){\circle*{1.7}}

\linethickness{0.3mm}
\put(9.15,38.15){\circle*{1.7}}

\put(27.61,58.12){\makebox(0,0)[cc]{$\xi\delta^{-1}$}}

\linethickness{0.3mm}
\put(27.63,41.12){\makebox(0,0)[cc]{$\xi\delta^{-1}$}}

\put(27.63,6.75){\makebox(0,0)[cc]{$\xi\delta^{-1}$}}

\linethickness{0.3mm}
\put(1.23,1.74){\line(0,1){10}}
\linethickness{0.3mm}
\put(11.23,1.74){\line(0,1){10}}
\linethickness{0.3mm}
\put(1.23,-0.13){\line(1,0){10}}
\linethickness{0.3mm}
\put(32.49,6.74){\line(1,0){10}}
\put(20.61,6.74){\makebox(0,0)[cc]{$=$}}

\linethickness{0.3mm}
\put(5.84,-0.13){\circle*{1.7}}

\linethickness{0.3mm}
\put(11.15,7.15){\circle*{1.7}}

\linethickness{0.3mm}
\put(32.49,23.62){\line(1,0){10}}
\put(20.61,23.62){\makebox(0,0)[cc]{$=$}}

\linethickness{0.3mm}
\put(37.09,23.85){\circle*{1.7}}

\linethickness{0.3mm}
\put(27.63,23.62){\makebox(0,0)[cc]{$\xi\delta^{-1}$}}

\linethickness{0.3mm}
\put(85,19){\line(0,1){10}}
\linethickness{0.3mm}
\put(95,19){\line(0,1){10}}
\linethickness{0.3mm}
\put(85,17.12){\line(1,0){10}}
\linethickness{0.3mm}
\put(85,24.4){\circle*{1.7}}

\linethickness{0.3mm}
\multiput(85,53.38)(0.08,0.08){125}{\line(1,0){0.08}}
\linethickness{0.3mm}
\multiput(85,63.38)(0.08,-0.08){125}{\line(1,0){0.08}}
\linethickness{0.3mm}
\put(85,51.5){\line(1,0){10}}
\linethickness{0.3mm}
\put(116.25,58.38){\line(1,0){10}}
\put(104.38,58.38){\makebox(0,0)[cc]{$=$}}

\linethickness{0.3mm}
\put(87.5,55.88){\circle*{1.71}}

\linethickness{0.3mm}
\put(92.73,55.65){\circle*{1.7}}

\linethickness{0.3mm}
\multiput(85,36.5)(0.08,0.08){125}{\line(1,0){0.08}}
\linethickness{0.3mm}
\multiput(85,46.5)(0.08,-0.08){125}{\line(1,0){0.08}}
\linethickness{0.3mm}
\put(85,34.62){\line(1,0){10}}
\linethickness{0.3mm}
\put(116.25,41.5){\line(1,0){10}}
\put(104.38,41.5){\makebox(0,0)[cc]{$=$}}

\linethickness{0.3mm}
\put(89.78,34.63){\circle*{1.71}}

\linethickness{0.3mm}
\put(120.23,41.27){\circle*{1.7}}

\linethickness{0.3mm}
\put(116.25,24){\line(1,0){10}}
\put(104.38,24){\makebox(0,0)[cc]{$=$}}

\linethickness{0.3mm}
\put(95,24.4){\circle*{1.7}}

\linethickness{0.3mm}
\put(95,7.52){\circle*{1.7}}

\linethickness{0.3mm}
\put(120.31,7.29){\circle*{1.7}}

\linethickness{0.3mm}
\put(85,2.12){\line(0,1){10}}
\linethickness{0.3mm}
\put(95,2.12){\line(0,1){10}}
\linethickness{0.3mm}
\put(85,0.25){\line(1,0){10}}
\linethickness{0.3mm}
\put(116.25,7.12){\line(1,0){10}}
\put(104.38,7.12){\makebox(0,0)[cc]{$=$}}

\linethickness{0.3mm}
\put(89.6,0.25){\circle*{1.7}}
\put(85,7.52){\circle*{1.7}}
\put(111.59,58.62){\makebox(0,0)[cc]{$\xi\delta^{-1}$}}

\linethickness{0.3mm}

\put(111.59,41.12){\makebox(0,0)[cc]{$\xi\delta^{-1}$}}
\put(111.59,24.25){\makebox(0,0)[cc]{$\xi\delta^{-1}$}}
\put(111.59,6.75){\makebox(0,0)[cc]{$\xi\delta^{-1}$}}

\end{picture}

\medskip\bigskip

\unitlength 0.8mm
\begin{picture}(138.88,63.75)(0,0)
\linethickness{0.3mm}
\put(32.49,58){\line(1,0){10}}
\put(20.61,58){\makebox(0,0)[cc]{$=$}}
\linethickness{0.3mm}
\put(37.09,57.77){\circle*{1.7}}
\linethickness{0.3mm}
\put(32.49,41.12){\line(1,0){10}}
\put(20.61,41.12){\makebox(0,0)[cc]{$=$}}

\put(27.61,58.12){\makebox(0,0)[cc]{$\xi\delta^{-1}$}}

\linethickness{0.3mm}
\put(27.63,41.12){\makebox(0,0)[cc]{$\xi\delta^{-1}$}}

\put(27.63,6.75){\makebox(0,0)[cc]{$\xi\delta^{-1}$}}

\linethickness{0.3mm}
\put(32.49,6.74){\line(1,0){10}}
\put(20.61,6.74){\makebox(0,0)[cc]{$=$}}

\linethickness{0.3mm}
\put(32.49,23.62){\line(1,0){10}}
\put(20.61,23.62){\makebox(0,0)[cc]{$=$}}

\linethickness{0.3mm}
\put(37.09,23.85){\circle*{1.71}}

\linethickness{0.3mm}
\put(27.63,23.62){\makebox(0,0)[cc]{$\xi\delta^{-1}$}}

\linethickness{0.3mm}
\put(116.25,41.5){\line(1,0){10}}
\put(104.38,41.5){\makebox(0,0)[cc]{$=$}}

\linethickness{0.3mm}
\put(120.23,41.27){\circle*{1.7}}

\linethickness{0.3mm}
\put(116.25,24){\line(1,0){10}}
\put(104.38,24){\makebox(0,0)[cc]{$=$}}

\linethickness{0.3mm}
\put(120.31,7.29){\circle*{1.7}}

\linethickness{0.3mm}
\put(116.25,7.12){\line(1,0){10}}
\put(104.38,7.12){\makebox(0,0)[cc]{$=$}}

\linethickness{0.3mm}
\put(111.59,41.12){\makebox(0,0)[cc]{$\xi\delta^{-1}$}}

\put(111.59,24.25){\makebox(0,0)[cc]{$\xi\delta^{-1}$}}

\put(111.59,6.75){\makebox(0,0)[cc]{$\xi\delta^{-1}$}}

\linethickness{0.3mm}
\put(85,29){\line(1,0){10}}
\linethickness{0.3mm}
\put(85,22.4){\circle*{1.7}}

\linethickness{0.3mm}
\put(95,22.4){\circle*{1.7}}

\linethickness{0.3mm}
\put(85,17){\line(0,1){10}}
\linethickness{0.3mm}
\put(95,17){\line(0,1){10}}
\linethickness{0.3mm}
\multiput(85,34.38)(0.08,0.08){125}{\line(1,0){0.08}}
\linethickness{0.3mm}
\multiput(85,44.38)(0.08,-0.08){125}{\line(1,0){0.08}}
\linethickness{0.3mm}
\put(89.77,46.26){\circle*{1.7}}

\linethickness{0.3mm}
\put(85,46.25){\line(1,0){10}}
\linethickness{0.3mm}
\put(95,5.4){\circle*{1.7}}

\linethickness{0.3mm}
\put(85,5.4){\circle*{1.7}}

\linethickness{0.3mm}
\put(85,0){\line(0,1){10}}
\linethickness{0.3mm}
\put(95,0){\line(0,1){10}}
\linethickness{0.3mm}
\put(89.6,11.87){\circle*{1.7}}

\linethickness{0.3mm}
\put(85,11.88){\line(1,0){10}}
\linethickness{0.3mm}
\put(85,63.75){\line(1,0){10}}
\linethickness{0.3mm}
\put(87.5,54.37){\circle*{1.71}}

\linethickness{0.3mm}
\put(92.73,54.14){\circle*{1.7}}

\linethickness{0.3mm}
\multiput(85,51.88)(0.08,0.08){125}{\line(1,0){0.08}}
\linethickness{0.3mm}
\multiput(85,61.87)(0.08,-0.08){125}{\line(1,0){0.08}}
\linethickness{0.3mm}
\put(5.86,11.87){\circle*{1.7}}

\linethickness{0.3mm}
\put(1.25,11.88){\line(1,0){10}}
\linethickness{0.3mm}
\put(1.25,28.75){\line(1,0){10}}
\linethickness{0.3mm}
\put(6.02,46.26){\circle*{1.7}}

\linethickness{0.3mm}
\put(1.24,46.25){\line(1,0){10.01}}
\linethickness{0.3mm}
\put(1.25,63.12){\line(1,0){10}}
\linethickness{0.3mm}
\put(116.25,58.38){\line(1,0){10}}
\put(104.38,58.38){\makebox(0,0)[cc]{$=$}}

\put(111.59,58.62){\makebox(0,0)[cc]{$\xi\delta^{-1}$}}

\linethickness{0.3mm}
\put(11.15,22.15){\circle*{1.7}}

\linethickness{0.3mm}
\put(1.25,16.88){\line(0,1){10}}
\linethickness{0.3mm}
\put(11.25,16.88){\line(0,1){10}}
\linethickness{0.3mm}
\put(3.69,36.79){\circle*{1.7}}

\linethickness{0.3mm}
\multiput(1.24,34.38)(0.08,0.08){125}{\line(1,0){0.08}}
\linethickness{0.3mm}
\multiput(1.24,44.38)(0.08,-0.08){125}{\line(1,0){0.08}}
\linethickness{0.3mm}
\put(8.69,53.79){\circle*{1.7}}

\linethickness{0.3mm}
\multiput(1.25,51.25)(0.08,0.08){125}{\line(1,0){0.08}}
\linethickness{0.3mm}
\multiput(1.25,61.25)(0.08,-0.08){125}{\line(1,0){0.08}}
\linethickness{0.3mm}
\put(11.15,5){\circle*{1.7}}

\linethickness{0.3mm}
\put(1.25,0){\line(0,1){10}}
\linethickness{0.3mm}
\put(11.25,0){\line(0,1){10}}
\end{picture}
\label{ch5brauerxi}
\unitlength 0.7mm
\begin{picture}(147.69,43.54)(0,0)
\linethickness{0.3mm}
\put(133.75,29.6){\circle*{1.7}}

\linethickness{0.3mm}
\put(4.63,26.5){\circle*{1.7}}

\linethickness{0.3mm}
\put(14.85,30.88){\circle*{1.71}}

\linethickness{0.3mm}
\put(-0.15,26.5){\line(1,0){10}}
\linethickness{0.3mm}
\put(9.85,30.88){\line(1,0){10}}
\linethickness{0.3mm}
\put(9.85,27.75){\line(0,1){1.25}}
\linethickness{0.3mm}
\put(15,14){\circle*{1.71}}

\linethickness{0.3mm}
\put(-0.15,9.62){\line(1,0){10}}
\linethickness{0.3mm}
\put(9.85,14){\line(1,0){10}}
\linethickness{0.3mm}
\put(9.85,10.88){\line(0,1){1.24}}
\put(28.35,11.59){\makebox(0,0)[cc]{$=$}}

\linethickness{0.3mm}
\put(52.5,12.1){\circle*{1.7}}

\linethickness{0.3mm}
\put(35.56,12){\makebox(0,0)[cc]{$\xi\delta^{-1}$}}

\linethickness{0.3mm}
\put(49.25,12){\line(1,0){7}}
\put(28.42,29){\makebox(0,0)[cc]{$=$}}

\linethickness{0.3mm}
\put(35.62,29.33){\makebox(0,0)[cc]{$\xi\delta^{-1}$}}

\linethickness{0.3mm}
\put(49.32,29.57){\line(1,0){7}}
\put(109.81,29.09){\makebox(0,0)[cc]{$=$}}

\linethickness{0.3mm}
\put(117.02,29.33){\makebox(0,0)[cc]{$\xi\delta^{-1}$}}
\linethickness{0.3mm}
\put(130.71,29.57){\line(1,0){7}}
\put(110,12){\makebox(0,0)[cc]{$=$}}

\linethickness{0.3mm}
\put(117.21,12){\makebox(0,0)[cc]{$\xi\delta^{-1}$}}

\linethickness{0.3mm}
\put(130.9,12){\line(1,0){7}}
\linethickness{0.3mm}
\put(90,28){\line(0,1){1.25}}
\linethickness{0.3mm}
\put(95,26.47){\circle*{1.71}}
\linethickness{0.3mm}
\put(90,26.47){\line(1,0){10}}
\linethickness{0.3mm}
\put(80,30.85){\line(1,0){10}}
\linethickness{0.3mm}
\put(90,11.23){\line(0,1){1.25}}
\linethickness{0.3mm}
\put(95,9.6){\circle*{1.71}}

\linethickness{0.3mm}
\put(90,9.6){\line(1,0){10}}
\linethickness{0.3mm}
\put(85.23,13.97){\circle*{1.7}}

\linethickness{0.3mm}
\put(80,13.97){\line(1,0){10}}
\end{picture}
\caption{Rules for the composition of two vertical pairs and a horizontal pair
in a product of two Brauer diagrams}
\label{ch5brauerxi3big}
\end{figure}

\begin{enumerate}[{\rm  (i)}] 
\item As before, draw the diagrams $f_1$ and $f_2$, stack them, and identify
point $\bar i$ at the bottom of $f_1$ with $i$ at the top of $f_2$.  The
points at the top of $f_1$ will be identified with those for $f$ and similarly
for the points at the bottom of $f_2$.

\item Determine the pairing of $f$ as before: for a point at the top of $f_1$
or the bottom of $f_2$, follow the strand until it ends in a point at the top
of $f_1$ or the bottom of $f_2$. This results in a new pair for $f$.

\item Set $\lambda=\lambda_1\lambda_2$. For each straightening step in a
concatenation of pairs as carried out in the previous step, check if the
pattern shrunk to a straight horizontal line segment occurs as the left hand
side of an equality in Figure~\ref{ch5brauerxi3big}. If so, multiply $\lambda$
by $\xi\delta^{-1}$; otherwise, $\lambda$ is not changed.  (Compare with the
left hand picture of Figure \ref{ch5brauerbasica}; this pattern as well as
each triple of straight line segments forming a shape appearing in Figure
\ref{ch5brauerxi3big} but whose decoration pattern does not appear in Figure
\ref{ch5brauerxi3big}, does not change $\lambda$.)

\item At this stage, only closed loops remain.  Closed loops come from strands
which have no endpoints in $f$.  First simplify loops by removing crossings as
in (iii), i.e. by use of Figure \ref{ch5brauerxi3big} (again, the
configurations not appearing in the figure do not give $\xi\delta^{-1}$) and
shrink them using the rules on the two bottom lines of Figure
\ref{ch5brauerxi3big} (at this stage, factors $\xi\delta^{-1}$ may emerge).
Next replace each closed loop without decoration by $\delta$ (that is, remove
the loop and multiply $\lambda$ by $\delta$) and each pair of disjoint closed
decorated loops by $\theta$.  As the number of decorated pairs is even, what
might remain is a simple decorated loop in the presence of a decorated pair;
if so, undecorate the pair (i.e., give it label $0$), remove the decorated
loop and multiply $\lambda$ by $\theta\delta^{-1}$.  (Compare with the right
hand picture of Figure \ref{ch5brauerbasica}.)

\begin{figure} [hbtp]
\unitlength 0.9mm
\begin{picture}(115.64,17)(0,5)
\put(23,11){\makebox(0,0)[cc]{$=$}}
\linethickness{0.3mm}
\put(13,8){\circle*{1.71}}
\put(13,14){\circle*{1.71}}
\put(13,6.5){\line(0,1){10}}
\put(33,6.5){\line(0,1){10}}
\put(67.89,11){\circle*{1.71}}
\put(70.96,11){\circle{6.37}}
\put(76.65,6.5){\line(0,1){10}}
\put(76.64,11){\circle*{1.71}}
\put(83,11){\makebox(0,0)[cc]{$=$}}
\put(91.25,11){\makebox(0,0)[cc]{$\theta\delta^{-1}$}}
\put(99.24,6.5){\line(0,1){10}}
\end{picture}
\caption{Two rules for decorated $n$-connectors}\label{ch5brauerbasica}
\end{figure}

\item If $\theta$ is a factor of $\lambda$, remove all decorations from
$f$.
\end{enumerate}
\end{Def}

Thus, $\BrD(\D_n)$ is a free $\Z[\delta^{\pm1}]$-module.  However, there is
some indeterminacy in the description of the multiplication due to the order
in which patterns are rewritten in (iii) and (iv). For instance in (iii),
instead of applying the fifth rule from the top of the right column of Figure
\ref{ch5brauerxi3big} to the top three strands of the right hand picture for
$\xi$ in Figure \ref{ch5closedloop}, we could have applied the first rule of
the same column of Figure \ref{ch5brauerxi3big} to the bottom three strands.

\begin{figure}[htbp]
\unitlength 1mm
\begin{picture}(26,15)(0,0)
\linethickness{0.3mm}
\put(10,9.41){\circle{11.18}}
\put(20,10){\makebox(0,0)[cc]{$=$}}
\put(26,10){\makebox(0,0)[cc]{$\delta$,}}
\end{picture}
$\qquad$
\unitlength 1mm
\begin{picture}(36.59,14.18)(0,0)
\put(30.59,9.18){\makebox(0,0)[cc]{$=$}}
\put(36.59,9.18){\makebox(0,0)[cc]{$\theta$,}}
\linethickness{0.3mm}
\put(20.59,8.59){\circle{11.18}}
\put(21.09,3.18){\circle*{2}}
\put(7,8.5){\circle{11.18}}
\put(7.5,3.09){\circle*{2}}
\end{picture}
$\qquad$
\unitlength 1mm
\begin{picture}(27,15)(0,0)
\linethickness{0.3mm}
\put(8.4,12){\circle*{2}}
\put(9,8){\circle*{2}}
\multiput(6.25,5.62)(0.08,0.08){100}{\line(0,1){0.09}}
\multiput(6.25,13.75)(0.08,-0.08){96}{\line(1,0){0.09}}
\put(20,10){\makebox(0,0)[cc]{$=$}}
\put(27,10){\makebox(0,0)[cc]{$\xi$}}
\multiput(6.25,13.75)(0.08,0.00){94}{\line(0,1){0.09}}
\multiput(6.25,5.62)(0.08,0.00){94}{\line(0,1){0.09}}
\end{picture}
\caption{Three closed loops occurring in
Brauer diagrams} \label{ch5closedloop}
\end{figure}

The proposition below shows that the multiplication on $\BrD(\D_n)$ is well
defined by means of a $\Z[\delta^{\pm1}]$-linear map $\nu :
\Br(\D_n)\to\BrD(\D_n)$ which we introduce first.

Recall from \cite{CFW} the set $\AO_0$ of highest elements of $W$-orbits in
$\AO$, the set of all admissible sets of mutually orthogonal positive roots.
We will use the action of $\Br(\D_n)$ on $\AO$, denoted $\sigma$ in
\cite[Theorem 3.6(i)]{CFW}.  For $X\in \AO_0$, we found a set $C_{WX}$ of
nodes of $\D_n$ whose corresponding simple roots are orthogonal to $X$. We
also defined $D_Y$ for $Y\in \AO$ as a set of minimal length coset
representatives of $N_W(Y)$ in $W$.  By $\Phi$ we denote the root system of
type $\D_n$. Its positive roots will be taken to be the vectors $\eps_j\pm
\eps_i$ with $j>i$ in $\R^n$ with orthonormal basis
$\eps_1,\ldots,\eps_n$. The simple roots are $\a_1 = \eps_2+\eps_1$ and $\a_i
= \eps_i-\eps_{i-1}$ for $i = 2,\ldots,n$.  If $\a$ is a root of $\Phi$, then
$\a^*$ denotes its orthogonal mate, that is, the unique other positive root
orthogonal to all roots orthogonal to $\a$, see \cite[Definition 3.1]{CGWBMW}.
If $\a = \eps_j-\eps_i$, then $\a^* = \eps_j+\eps_i$ and $(\a^*)^* = \a$.  We
write $r_\a^*$ for $r_{\a^*}$ and $r_n^*$ for the reflection with root
$\a_n^*$ and, similarly, $e_\a^*$ for $e_{\a^*}$ and $e_i^*$ for
$e_{\a_i}^*$.  We will also use the natural permutation action of $W$ on
$\{\pm1,\ldots,\pm n\}$, which maps $r_{\eps_j-\eps_i}$ to $ (i,j)(-i,-j)$
and $r_{\eps_j+\eps_i}$ to $(i,-j)(-i,j)$.

\begin{Def}
\label{df:nu}
\rm By \cite[Corollary 5.5]{CFW}, $\Br(\D_n)$ has a basis over
$\Z[\delta^{\pm1}]$ consisting of the elements $ue_Xzv$ with 
$X\in\AO_0$, $u,v^{-1}\in
D_X$, and $z\in W(C_{WX})$.  By \cite[Lemma 1.2]{CFW}, either
there is a root $\b\in\Phi^+$ such that both $\b$ and $\b^*$ belong to $X$, or
$X$ is in the same $W$-orbit as $Y(t)$, for some $t\in\{1,\ldots,\lfloor
n/2\rfloor\}$, or $Y'(n/2)$ (in which case $n$ is even), where $Y(t) =
\{\a_n,\a_{n-2},\ldots,\a_{n-2t+2}\}$ and $Y'(n/2) =
\{\a_n,\a_{n-2},\ldots,\a_{4},\a_1\}$, cf.\ \cite[Section 3]{CGWBMW}. It will
be more convenient for us to write the basis elements of the latter kind with
$Y(t)$, respectively $Y'(n/2)$, instead of the highest element $X$. Observe
that for this purpose $W(C_{WX})$ needs to be replaced by $\la r_n^*\ra
\times\la r_1,\ldots,r_{n-2t}\ra$.

The {\em Brauer diagram} corresponding to $a = ue_Xzv$
with $X = Y(t)$, $Y'(t)$, or $Y(t)\cup Y(t)^*$,
$u$, $v^{-1}\in D_X$, and $z = (r_n^*)^k z_0$ for some $k\in\{0,1\}$ and
$z_0\in \la r_1,\ldots,r_{n-2t}\ra$,
is $\nu(a) = \lambda f$ where $\lambda\in\Lambda$ and the decorated
$n$-connector $f$ are as follows.  We first describe the horizontal strands of
$f$.  If $\eps_j-\eps_i\in uX = a\emptyset$, then $f$ has an undecorated
horizontal strand pairing $i$ and $j$ at the top.  Similarly, if
$\eps_j-\eps_i\in v^{-1}X = a^{\op}\emptyset$ (where ${a}^{\op}$ is obtained
from $a$ by reversing an expression for $a$, see \cite[Remark 5.7]{CFW}) then
$f$ has an undecorated horizontal strand pairing $\bar i$ and $\bar j$ at the
bottom.  Dually, if $\eps_j+\eps_i\in uX$ and $\eps_j-\eps_i\not\in uX$, then
$f$ has a decorated horizontal strand pairing $i$ and $j$ at the top, and if
$\eps_j+\eps_i\in v^{-1}X$ but $\eps_j-\eps_i\not\in v^{-1}X$, then $f$ has a
decorated horizontal strand pairing $\bar i$ and $\bar j$ at the bottom.

Suppose $\b^*\in X $ for some $\b\in X$. Then, as $X$ is admissible, $\b^*\in
X$ for all $\b\in X$.  Put $\lambda = \theta\delta^{|X|/2-2}$. (This is equal
to $(\theta\delta^{-1})^{|X|/2}$.)  As for the vertical strands of $f$, we
connect $\bar j$ at the bottom with $j\not\in v^{-1}X$ to $i$ at the top with
an undecorated strand whenever $i = \pm uzv(j)$. This finishes the description
of $\nu(a)$ in case $\b,\b^*\in X$ for some $\b$.

Suppose now that $X = Y(t)$ for some $t\in\{1,\ldots,\lfloor
n/2\rfloor\}$. The treatment of $Y'(n/2)$ is the same as the treatment of
$Y(n/2)$ with the nodes $1$ and $2$ of $\D_n$ interchanged.  Therefore, we do
not discuss it further.  Now $z = (r_n^*)^k z_0$ for some $k\in\{0,1\}$ and
$z_0\in \la r_1,\ldots, r_{n-2t}\ra$.  Put $\lambda = (\xi\delta^{-1})^k$.
Finally, for the vertical strands of $f$, connect $\bar j$ at the bottom to
$i$ at the top with an undecorated strand whenever $i = uzv(j)$ and with a
decorated strand if $i = - uzv(j)$.  This completes the definition of
$\lambda$ and $f$ and hence of the Brauer diagram $\nu(a)$.

Observe that $\nu(a)$ is well defined. For instance, if $u$ and $u'$ are
elements of $W$ of minimal length with $uX = u'X$, then $uzv$ and $u'zv$ have
the same action on the roots orthogonal to $v^{-1}X$, and similarly for $v$
and $v'$ of minimal length with $v^{-1}X = {v'}^{-1}X$, see \cite[Lemma
4.8(i)]{CFW}. Therefore $uzv(j)$ for $j$ not occurring as an index in
$v^{-1}X$, does not depend on the choice of $D_X$.

So far we have defined $\nu(a)$ for $a$ belonging to a basis of
$\Br(\D_n)$. By \cite[Theorem 1.1]{CFW}, the latter is a free
$Z[\delta^{\pm1}]$-module, so $\nu$ can be extended by
$Z[\delta^{\pm1}]$-linearity to a map $\Br(\D_n)\to \BrD(\D_n)$,
which will also be denoted $\nu$.
\end{Def}

\begin{Remarks}\label{rmks:afternu}
\rm(i).
For example, the Brauer diagrams $\nu(e_i)$ and $\nu(r_i)$ are as for $E_i$
and $G_i$ in Figures \ref{picsGiEi} and \ref{ch5G0E0} after the twists around
the pole have been replaced by decorations.  Moreover, the version of $E_i$
with $i\ge2$ in which both strands are decorated is equal to $\nu(e_i^*)$.

\nl(ii).  The Weyl group $W$ of type $\D_n$ can be diagrammatically described
as follows by decorated diagrams having only vertical pairs: for $\b =
\eps_j \pm \eps_i$, the reflection $r_\b$ is depicted by the diagram all of
whose pairs are straight downwards and without decorations, except two
vertical pairs connecting $i$ and $j$ at the top with $\bar j$ and $\bar i$
at the bottom, respectively, and both decorated if $\b = \eps_j+\eps_i$, and
neither decorated if $\b = \eps_j-\eps_i$.  Also, it is readily checked that
the usual multiplication rules, with the one of the left picture in Figure
\ref{ch5brauerbasica} for the reduction of decorations if there are more than
one per strand, suffice for the description of $W$ in terms of diagrams.  This
describes the restriction of $\nu$ to $W$.

\nl(iii).
Restriction of $\nu$ to $\Br(\A_{n-1})$, where $\A_{n-1}$ stands
for the set of nodes $\{2,\ldots,n\}$ of $\D_n$, gives an isomorphism between
the Brauer algebra, $\Br(\A_{n-1})$ and the Brauer diagram algebra
$\BrD(\A_{n-1})$, the $\Z[\delta^{\pm1}]$-linear span of all undecorated
$n$-connectors (without scalars $\xi$ and $\theta$), see Figure
\ref{picsGiEi}.  This fact has been known since the time of \cite{Wen}.
\label{th:BrAnIso}

\nl(iv).
\label{rmk:BrauerDs}
The equality involving the rightmost closed loop of Figure \ref{ch5closedloop}
is a consequence of the fifth rule from the top in the right column of Figure
\ref{ch5brauerxi3big} (applied to the top three strands with a decoration on
each of the two diagonal strands). For, after removing the crossing by that
rule, a closed undecorated loop remains, so the whole loop can be replaced by
$(\xi\delta^{-1})\delta$, which is equal to $\xi$. Similarly the top rule on
the right can be used.

\nl(v).  The diagrams given in the two last rows of
Figure~\ref{ch5brauerxi3big}, where the common endpoint of the two horizontal
strands is in the middle, suffice for describing all computations involving
multiple horizontal strands.  An example is given in Figure~\ref{ch5asso2}.

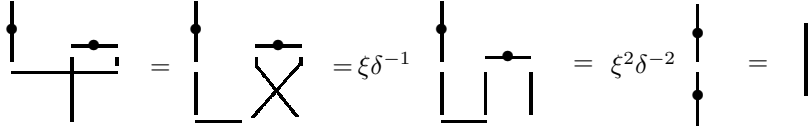
\begin{figure}[htbp]
\unitlength .8mm
\begin{picture}(150,20)(0,0)
\linethickness{0.3mm}
\put(13.69,13.43){\circle*{1.71}}

\put(0,10.63){\line(0,1){10}}

\put(10,0.63){\line(0,1){10}}

\put(0,8.75){\line(1,0){17.5}}

\put(10,13.13){\line(1,0){7.5}}

\put(10,10){\line(0,1){1.24}}

\put(17.5,10){\line(0,1){1.24}}

\put(0,16){\circle*{1.71}}

\put(24.75,9.38){\makebox(0,0)[cc]{$=$}}

\multiput(40.62,10.01)(0.08,-0.1){86}{\line(0,-1){0.1}}

\multiput(40,1.25)(0.08,0.09){101}{\line(0,1){0.09}}

\put(40.62,10.01){\line(0,1){1.24}}

\put(48.12,10.01){\line(0,1){1.24}}

\put(44.31,13.43){\circle*{1.71}}

\put(40.62,13.13){\line(1,0){7.5}}

\put(30.62,10.63){\line(0,1){10}}

\put(30.62,15.99){\circle*{1.71}}

\put(30.62,1.88){\line(0,1){6.87}}

\put(30.62,0.62){\line(1,0){7.5}}
\put(55,9.38){\makebox(0,0)[cc]{$=$}}

\put(62,10){\makebox(0,0)[cc]{$\xi\delta^{-1}$}}

\put(71.48,10.62){\line(0,1){10}}

\put(71.48,16){\circle*{1.71}}

\put(71.48,1.87){\line(0,1){6.87}}

\put(71.48,0.62){\line(1,0){7.5}}

\put(82.44,11.55){\circle*{1.71}}

\put(78.75,11.25){\line(1,0){7.5}}

\put(78.75,1.88){\line(0,1){6.87}}

\put(86.25,1.88){\line(0,1){6.87}}

\put(95,10){\makebox(0,0)[cc]{$=$}}

\put(105,10){\makebox(0,0)[cc]{$\xi^2\delta^{-2}$}}

\put(114,10.62){\line(0,1){9.38}}
\put(114,15.36){\circle*{1.71}}

\put(114,0){\line(0,1){8.75}}

\put(114,5){\circle*{1.71}}

\put(124,10){\makebox(0,0)[cc]{$=$}}

\put(132,5){\line(0,1){11.88}}
\end{picture}
\caption{Multiplication involving horizontal pairs} 
\label{ch5asso2}
\end{figure}

It is straightforward to check that when two horizontal strands have their
common endpoint on the left, the scalar $\xi\delta^{-1}$ only comes in when
precisely one of the two strands has a decoration. When two horizontal strands
have their common node on the right, the scalar $\xi$ nevers appears, as in
Figure \ref{ch5asso2}.

\nl(vi) The map $\nu$ satisfies the following identities for positive roots
$\a$, respectively $\a$ and $\b$, with $|(\a,\b)| = 1$.
$$\nu(r_\a^* e_\a) = \xi\delta^{-1}\nu(e_\a), \qquad \nu(r_\a r_\a^* e_\b) =
\xi\delta^{-1}\nu(e_\b^* e_\a e_\b).$$ Indeed, these equations hold for
$\a=\a_2$, respectively $\a=\a_2$ and $\b=\a_3$,
and follow from these case by conjugation with a suitable Weyl group element
(observe that the $\xi$ factors of $\nu(r_i e_\a)$ and $\nu(e_\a r_i)$
agree for every node $i$, so for each $w\in W$ with $w\a_1 = \a$,
we have $\nu(e_\a) = \nu(w)\nu(e_1)\nu(w^{-1})$).
\end{Remarks}

\begin{Lm}
\label{lm:AfterBMW}
For $i,j,k\in\{1,\ldots,n\}$, denote by $f_{ij}^k$ the decorated $n$-connector
all of whose pairs are of the form $\{x,\bar x\}$ except for $x\in \{i,j\}$,
where the pairs are $\{i,j\}$ and $\{\bar i, \bar j\}$, and of which only the
top horizontal pair $\{i,j\}$ and the vertical pair $\{k,\bar k\}$
are labeled $1$. Similarly, let $g_{ij}^k$ be as $f_{ij}^k$ but with
the bottom horizontal pair $\{\bar i,\bar j\}$
labeled $1$ instead of $\{i,j\}$.
Then, for $i<j$ and $k\not\in\{i,j\}$,
\begin{eqnarray*}
\nu(r_\a r_\a^* e_\b) &=& 
\begin{cases}
f_{ij}^k&\mbox{ if } \a=\eps_k-\eps_i\mbox{ and }\b=\eps_j-\eps_i\\
\xi\delta^{-1}f_{ij}^k&\mbox{ if } 
\a=\eps_k-\eps_j \mbox{ and } \b=\eps_j-\eps_i\\
g_{ij}^k&\mbox{ if } \a=\eps_k-\eps_i \mbox{ and } \b=\eps_j+\eps_i\\
\xi\delta^{-1}g_{ij}^k&\mbox{ if } 
\a=\eps_k-\eps_j \mbox{ and } \b=\eps_j+\eps_i\\
\end{cases}
\end{eqnarray*}
\end{Lm}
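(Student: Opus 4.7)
The plan is to verify each of the four formulas by computing $\nu(r_\a r_\a^* e_\b)$ directly from Definition~\ref{df:nu}, rewriting the element in standard basis form $u e_X z v$ and reading off the resulting decorated connector.

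The starting point, valid in all four cases, is the identity $r_\a r_\a^* e_\b = e_{\b^*}\, r_\a r_\a^*$, which follows from $w e_\gamma = e_{w\gamma} w$ (with the convention $e_{-\sigma}=e_\sigma$) together with a direct check that $r_\a r_\a^*(\b)$ equals $+\b^*$ in cases 1 and 3 and $-\b^*$ in cases 2 and 4. Picking a minimal-length $u_0\in W$ with $u_0\a_n = \b^*$ and writing $e_{\b^*} = u_0 e_{\{\a_n\}} u_0^{-1}$, this becomes
\[
r_\a r_\a^* e_\b \;=\; u_0\, e_{\{\a_n\}}\, (u_0^{-1} r_\a r_\a^*).
\]
I then decompose $u_0^{-1} r_\a r_\a^* = z v$ with $z \in \la r_n^*\ra \times \la r_1, \ldots, r_{n-2}\ra$ and $v^{-1} \in D_{\{\a_n\}}$ of minimal length, putting the element in standard basis form with $X = Y(1) = \{\a_n\}$.

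Definition~\ref{df:nu} then determines the diagram immediately. The top horizontal pair is read off from $u_0\{\a_n\} = \{\b^*\}$: decorated when $\b^* = \eps_j + \eps_i$ (cases 1, 2) and undecorated when $\b^* = \eps_j - \eps_i$ (cases 3, 4). The bottom pair is read off from $v^{-1}\{\a_n\} = r_\a r_\a^*\{\b^*\} = \{\b\}$ and shows the complementary pattern. The vertical strands are governed by $u_0 z v = r_\a r_\a^*$, which negates $\eps_k$ together with one of $\eps_i,\eps_j$ (the latter already absorbed into the horizontal pair), so only the $\{k,\bar k\}$-strand acquires a single decoration. These ingredients combine to produce exactly $f_{ij}^k$ in cases 1, 2 and $g_{ij}^k$ in cases 3, 4.

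It remains to determine the scalar $\lambda = (\xi\delta^{-1})^\ell$, where $\ell\in\{0,1\}$ is the exponent of $r_n^*$ in the normal form $z = (r_n^*)^\ell z_0$ with $z_0 \in \la r_1,\ldots,r_{n-2}\ra$. The key insight is that $\ell$ is controlled by the sign in $r_\a r_\a^*(\b) = \pm\b^*$ observed above: since $z_0$ fixes $\a_n^* = \eps_n + \eps_{n-1}$ while $r_n^*$ negates it, a length computation in $W(\D_n)$ yields $\ell = 0$ when the sign is $+$ (cases 1, 3) and $\ell = 1$ when it is $-$ (cases 2, 4), producing exactly the prefactors claimed in the lemma. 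The main obstacle is precisely this length-theoretic bookkeeping: tracking how the sign of $r_\a r_\a^*(\b)$ relative to $\b^*$ translates to the parity of $r_n^*$ in the coset decomposition of $u_0^{-1} r_\a r_\a^*$, which requires careful control over the minimal-length representative $v^{-1}\in D_{\{\a_n\}}$ and the interaction of $u_0^{-1}$ with the double-negation $r_\a r_\a^*$.
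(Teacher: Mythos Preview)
Your approach differs from the paper's: the paper verifies the formulas for the single triple $(i,j,k)=(1,2,3)$ by an explicit word computation in $\Br(\D_n)$ and then extends to arbitrary triples by induction on the sum of the pairwise distances, whereas you attempt a uniform structural analysis via the normal form $u_0\, e_{\{\a_n\}}\, z\, v$. Your route is more conceptual and treats all four cases simultaneously; the paper's trades elegance for the ability to sidestep the coset bookkeeping you flag as the main obstacle.

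There is a technical slip and a gap to fill. The slip: the decomposition $u_0^{-1} r_\a r_\a^* = z v$ with $z \in \la r_n^*\ra \times \la r_1,\ldots,r_{n-2}\ra$ need not hold in $W$. The full normalizer $N_W(\{\a_n\})$ also contains $r_n$, and in cases~2 and~4 an $r_n$ factor is genuinely present; it is absorbed only by the Brauer relation $e_{\a_n} r_n = e_{\a_n}$. Consequently your Weyl-group equation $u_0 z v = r_\a r_\a^*$ fails in those cases (the discrepancy is $r_{\b^*}$, which only moves the horizontal indices $i,j$, so your vertical-strand conclusion survives).

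The gap is the scalar computation, which you leave as a sketch. It can be completed as follows. Minimality of $v^{-1}$ and of $u_0$ in $D_{\{\a_n\}}$ forces $v^{-1}(\a_n^*)>0$ and $u_0(\a_n^*)>0$; combined with $v^{-1}(\a_n)=\b$ and $u_0(\a_n)=\b^*$ this yields $v^{-1}(\a_n^*)=\b^*$ and $u_0(\a_n^*)=\b$, i.e., $u_0^{-1}(\b)=\a_n^*$. Writing $z'=u_0^{-1} r_\a r_\a^* v^{-1}\in N_W(\{\a_n\})$ and using that the involution $r_\a r_\a^*$ sends $\b^*$ to $\pm\b$ with the same sign as in $r_\a r_\a^*(\b)=\pm\b^*$, one obtains $z'(\a_n^*)=u_0^{-1}(\pm\b)=\pm\a_n^*$. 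Since $r_n$ and $\la r_1,\ldots,r_{n-2}\ra$ fix $\a_n^*$ while $r_n^*$ negates it, this sign is $(-1)^\ell$, which is exactly your claim.
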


\begin{proof}
For $i=1$, $j=2$, $k=3$, the formulas follow from straightforward
computations. For instance, for the 
second line, with $\a = \eps_3-\eps_1$, $\b = \eps_3-\eps_2$,
we find
$r_\a r_\a^* e_\b = (r_3r_2r_3)(r_3r_1r_3) e_3 = 
r_3r_2r_1e_3 =  r_2r_1e_3 r_3^* $ and so
$\nu(r_\a r_\a^* e_\b) = \xi\delta^{-1}\nu(r_2r_1e_3) = 
\xi\delta^{-1} f_{23}^1$ by the definition of $\nu$.

For general $i<j$, the proof can be finished by induction on the sum of the
distances between $i$, $j$, and $k$.
\end{proof}

\begin{Prop}
\label{prop:AfterBMW}
The $\Z[\delta^{\pm1}]$-algebra $\BrD(\D_n)$ is well defined and the linear
map $\nu: \Br(\D_n)\to \BrD(\D_n)$ is an isomorphism.
\end{Prop}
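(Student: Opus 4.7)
The plan is to establish the proposition in three phases.

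Phase 1 (set-theoretic bijection). First I would verify that $\nu$ carries the basis of $\Br(\D_n)$ described in Definition~\ref{df:nu} bijectively onto the set of Brauer diagrams. Injectivity is direct: from the decorated $n$-connector $f$ in $\nu(ue_Xzv)$, one reads off the horizontal pairs at the top and bottom (hence $u\emptyset$ and $v^{-1}\emptyset$, which determine $uX$ and $v^{-1}X$ and therefore $u$, $v$, and $X$ up to the usual cosets in $D_X$), and the remaining vertical pattern together with the scalar in $\{1,\xi\delta^{-1},\theta\delta^{|X|/2-2}\}$ determines the Weyl-type factor $z$. Surjectivity follows by a dimension count: the counts in Lemma~\ref{ch5Fn} give
\[
|T_n|+|\xi T^=_n|+|\theta(T^0_n\cap T^=_n)| = 2^{n-1}n!!+2^{n-1}(n!!-n!)+(n!!-n!) = (2^n+1)n!!-(2^{n-1}+1)n! = d(n),
\]
which equals the rank of $\Br(\D_n)$ by \cite[Theorem 1.1]{CFW}.

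Phase 2 (transport of structure). Since $\nu$ is a bijection between a basis of $\Br(\D_n)$ and the free $\Z[\delta^{\pm1}]$-generating set of $\BrD(\D_n)$, it extends by $\Z[\delta^{\pm1}]$-linearity to an isomorphism of free modules. I would then temporarily define a product on $\BrD(\D_n)$ by transport, namely $f\cdot g := \nu(\nu^{-1}(f)\nu^{-1}(g))$; this makes $\nu$ tautologically an algebra isomorphism, provided one can identify this transported product with the procedural recipe of Definition~\ref{df:BrDalg}. Doing so simultaneously proves that the procedural multiplication is well defined (independent of the order in which the rewriting steps are carried out) and that $\nu$ respects multiplication.

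Phase 3 (matching the procedural product). This is the heart of the argument and the main obstacle. It suffices to check that for every basis element $a=ue_Xzv$ and each Brauer-algebra generator $r_i$ or $e_i$, the procedural product $\nu(a)\cdot\nu(r_i)$ agrees with $\nu(ar_i)$, and similarly for $\nu(e_i)$; associativity and $\Z[\delta^{\pm1}]$-bilinearity then force agreement for all products. The verification splits into cases by whether a concatenation event in the stacking $\nu(a)\nu(r_i)$ merges two vertical pairs, absorbs a horizontal pair, creates a horizontal pair, creates a closed loop, or produces a decorated configuration. Remarks~\ref{rmks:afternu}(vi) and Lemma~\ref{lm:AfterBMW} already supply the identities needed when the procedural rules from Figure~\ref{ch5brauerxi3big} introduce a factor $\xi\delta^{-1}$ through the passage of a decoration across a horizontal strand; the two special loop patterns in Figure~\ref{ch5closedloop} are handled by the closed-loop rules (iv) of Definition~\ref{df:BrDalg} and by Remark~\ref{rmks:afternu}(iv). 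The argument then reduces, by conjugating with $w\in W$, to checking a handful of explicit situations near nodes $1$, $2$, $3$ of $\D_n$, where one computes directly in $\Br(\D_n)$ using the defining relations from \cite{CFW} and compares with the diagrammatic manipulation. The bookkeeping of decorations, specifically the passage between $\xi$-, $\theta$-, and undecorated regimes dictated by steps (iii)--(v) of Definition~\ref{df:BrDalg}, is the most delicate part, since the procedural rules must be shown to produce the exact scalar $(\xi\delta^{-1})^k$ or $\theta\delta^{|X|/2-2}$ prescribed by Definition~\ref{df:nu} regardless of the order of simplification. Once this matching is verified on generators, Phase 2 completes the proof and yields both well-definedness of $\BrD(\D_n)$ and the algebra isomorphism $\nu\colon\Br(\D_n)\to\BrD(\D_n)$.
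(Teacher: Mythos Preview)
Your overall strategy is the paper's: bijection via the dimension count of Lemma~\ref{ch5Fn}, transport the product along $\nu$, and then verify that the transported product coincides with the procedural one of Definition~\ref{df:BrDalg}. Phase~1 and Phase~2 are fine and match the paper.

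The gap is in your reduction in Phase~3. You claim it suffices to verify $\nu(a)\cdot_{\mathrm{proc}}\nu(s)=\nu(as)$ for basis elements $a$ and generators $s$, and that ``associativity and bilinearity then force agreement for all products.'' But the associativity you are invoking is associativity of the \emph{procedural} product, and that is exactly part of what must be shown (it is a consequence of, not an input to, the equality $\cdot_{\mathrm{proc}}=\cdot_{\mathrm{trans}}$). Knowing $\nu(a)\cdot_{\mathrm{proc}}\nu(s)=\nu(as)$ for all $a$ and all generators $s$ only says that iterated right procedural multiplication by generators realizes the transported action; it does \emph{not} tell you that a single procedural multiplication $\nu(a)\cdot_{\mathrm{proc}}\nu(b)$ by a general diagram $\nu(b)$ equals that iterated product. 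Bridging those requires precisely the associativity instance $(\nu(a)\cdot_{\mathrm{proc}}\nu(b'))\cdot_{\mathrm{proc}}\nu(s)=\nu(a)\cdot_{\mathrm{proc}}(\nu(b')\cdot_{\mathrm{proc}}\nu(s))$, which you have not established.

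The paper avoids this by not reducing to generators. Instead it uses the projection $\pi$ to $\Br(\A_{n-1})$ to show that, for \emph{arbitrary} monomials $a,b$, the underlying $n$-connector of $\nu(ab)$ already agrees with the one produced by Steps~(i)--(ii) of the procedure. What remains, the decoration pattern and the scalar in $\Lambda$, is a product of \emph{local} contributions, one per concatenation event in the stacked picture. The paper then verifies each type of local event (the triples in Figure~\ref{ch5brauerxi3big}, the loop rules, and Step~(v)) by isolating it in a product such as $a=a'r_\alpha r_\alpha^*$, $b=e_\beta b'$ and invoking Lemma~\ref{lm:AfterBMW} and Remark~\ref{rmks:afternu}(vi). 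Because the procedural output is determined by the underlying connector together with these local contributions, checking each rule suffices for all products at once, with no appeal to associativity. Your case analysis is essentially the right list of local checks; the organizing principle should be ``verify each rule of Definition~\ref{df:BrDalg} holds for the transported product'' rather than ``reduce to multiplication by a generator.''
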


\begin{proof}
By reversing the above definition of $\nu(a)$, we find that each Brauer
diagram is of the form $\delta^k\nu (a)$ for some $k\in\Z$ and some monomial
$a$ in $\Br(\D_n)$, so $\nu$ is surjective.  As a $\Z[\delta^{\pm1}]$-module,
the Brauer diagram algebra $\BrD(\D_n)$ is free with basis $T_n\cup \xi
T_n^=\cup \theta (T^0_n\cap T^=_n)$. Its dimension is $|T_n| + |T^=_n| +
|T^0_n \cap T^=_n|$.  By Lemma~\ref{ch5Fn} this number equals $d(n)$, the
dimension of the free $\Z[\delta^{\pm1}]$-algebra $\Br(\D_n)$,
cf. \cite[Theorem 1.1]{CFW}, and so $\nu$ is bijective.

There is a unique multiplication on $\BrD(\D_n)$ such that $\nu$ is an
isomorphism of $\Z[\delta^{\pm1}]$-algebras. We claim that this multiplication
satisfies the rules of Definition \ref{df:BrDalg}.  For Steps (i) and (ii), as
well as the $\delta$ part of (iv), the rules coincide with the laws known for
$\A_{n-1}$ and settle the multiplication up to decorations and the coefficient
in $\Lambda$. To substantiate the claim for these two steps, consider the
$\Z[\delta^{\pm1}]$-algebra homomorphism $\pi$ on $\Br(\D_n)$ determined by
$\pi(r_1) = r_2$, $\pi(e_1) = e_2$ and fixing the other generators.  The
homomorphism $\pi$ projects $\BrD(\D_n)$ onto its subalgebra generated by
$r_i$ and $e_i$ for $i>1$. By inspection of the defining relations, it is
clear that this subalgebra is a homomorphic image of $\Br(\A_{n-1})$ (with a
shift of $1$ in the indices of the usual generators). The decorated
$n$-connectors that are $\nu$-images of monomials from this subalgebra are
precisely those belonging to the usual Brauer diagram algebra of undecorated
$n$-connectors (appearing here as the $n$-connectors all of whose labels are
$0$, cf.~Remark \ref{rmks:afternu}(iii)). Therefore, the restriction of $\nu$
to the image of $\pi$ is surjective onto the Brauer diagram algebra
$\BrD(\A_{n-1})$, and so the image of $\pi$ is a subalgebra of $\Br(\D_n)$
isomorphic to $\Br(\A_{n-1})$. In terms of Brauer diagrams, $\nu\circ \pi
\circ \nu^{-1}$ sends a Brauer diagram $\lambda f$, where $\lambda\in \Lambda$
and $f$ is a decorated $n$-connector, to $\pi(\lambda) \pi(f)$, where $\pi(f)$
is obtained from $f$ by removing the decorations, and $\pi(\lambda)$ is
determined by $\pi(\delta) =\pi(\xi)=\delta$ and $\pi(\theta) =\delta^2$.  In
this light, all that is needed to verify is that, for monomials $a$, $b\in
\BrM(\D_n)$, the decoration of the decorated $n$-connector $f$ and the
coefficient $\lambda$ of the result $ \lambda f$ of the multiplication of
$\nu(a)$ and $\nu(b)$ according to Definition \ref{df:BrDalg} coincide with
those of $\nu(ab)$. 

The homomorphism $\pi$ also shows that the left rule of
Figure \ref{ch5brauerbasica} applies: a label $1$ on a vertical strand
from top node $h$ to bottom node $\bar k$ means that
the central part (the element
$uzv$ in the notation of Definition \ref{df:nu}) of the
corresponding monomial $ ue_Xzv$
maps $k$ to $-h$, and so two consecutive
labels $1$ on the same strand cancel.

For Step (iii) of Definition \ref{ch5brauerxi3big}, we need to verify the
rules of Figure \ref{ch5brauerxi3big}.  The four rules on the two bottom lines
are consequences of the others: taking the vertical pairs into account, we can
view the left hand sides of each of these as a $U$-shaped triple of line
segments followed or preceeded by a similar triple upside down whose
neighboring vertical pair is identified with the neighboring vertical pair
of the first triple.  By the rule for the $U$-shaped triple, read backwards,
and next for the second triple, read forward, the presence of a
$\xi\delta^{-1}$ factor can be determined. See Figure \ref{fig:nobottoms} for
an example.

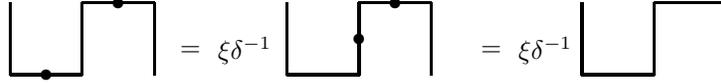
\begin{figure}[htbp]
\unitlength .8mm
\begin{picture}(122,20)(0,0)
\linethickness{0.3mm}

\put(0,4){\line(0,1){12}}

\put(0,4){\line(1,0){12}}

\put(12,4){\line(0,1){12}}

\put(6,4){\circle*{1.71}}

\put(12,16){\line(1,0){12}}

\put(18,16){\circle*{1.71}}

\put(24,4){\line(0,1){12}}

\put(30,8){\makebox(0,0)[cc]{$=$}}

\put(39,8){\makebox(0,0)[cc]{$\xi\delta^{-1}$}}

\put(46,4){\line(0,1){12}}

\put(46,4){\line(1,0){12}}

\put(58,4){\line(0,1){12}}

\put(64,16){\circle*{1.71}}

\put(58,16){\line(1,0){12}}

\put(58,10){\circle*{1.71}}

\put(70,4){\line(0,1){12}}

\put(80,8){\makebox(0,0)[cc]{$=$}}

\put(89,8){\makebox(0,0)[cc]{$\xi\delta^{-1}$}}

\put(95,4){\line(0,1){12}}

\put(95,4){\line(1,0){12}}

\put(107,4){\line(0,1){12}}

\put(107,16){\line(1,0){12}}

\put(119,4){\line(0,1){12}}

\end{picture}
\caption{Second rule at the left from below of Figure \ref{ch5brauerxi3big} 
obtained as a consequence of two earlier rules}
\label{fig:nobottoms}
\end{figure}

If $a$ and $b$ are outside the ideal generated by the $e_i$, they belong to
$W(\D_n)$ up to multiples from $\la\delta^{\pm1}\ra$; in this case $\nu(a)$
and $\nu(b)$ only have vertical pairs and the decoration rules of Definition
\ref{df:BrDalg} are easily seen to hold.  Also, when no bottom ends of a
crossing of two vertical pairs of $\nu(a)$ match the ends of a top horizontal
pair of $\nu(b)$, no changes in the coefficient $\lambda$ occurs. We will use
these observations to reduce the configurations to be considered.

Both the rules
of the figure and the relations defining $\Br(\D_n)$ are closed under
opposition, i.e., reflecting the picture in a horizontal mirror and reversing
an expression for a word in the generators of $\Br(\D_n)$. Therefore, it
suffices to consider the rules of Figure \ref{ch5brauerxi3big} in which the
horizontal strands belong to $\nu(b)$ and the two vertical strands belong to
$\nu(a)$.

We now show that the rules whose vertical pairs (belonging to $\nu(a)$)
cross, follow from those without crossings. The latter will be called {\em $U$
rules}, a name reminding us of their shapes. Take any left hand side of an
equality with the horizontal pair in $\nu(b)$ and crossing vertical pairs in
$\nu(a)$, insert straight line segments above the top corners and move the
decoration, if present, on the horizontal pair towards the vertical pair
ending at the left (this is a $U$ rule); next move all decorations up towards
the top vertical line segments by the rules for $W(\D_n)$. At the bottom, we
have a triple of undecorated pairs; now the rules for undecorated pairs are
those for the Brauer diagram algebra of type $\A_{n-1}$, and allow us to
replace the triple of line segments by a single horizontal pair, and we can
finish by the applying $U$ rules. See Figure \ref{ch5asso} for an example.

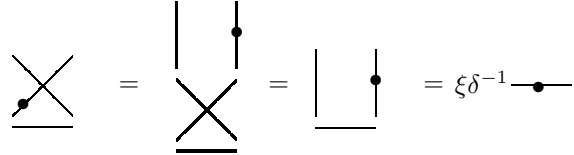
\begin{figure}[htbp]
\unitlength 0.8mm
\begin{picture}(109,25)(0,0)
\put(22.12,11){\makebox(0,0)[cc]{$=$}}
\multiput(2.74,6)(0.08,0.08){125}{\line(1,0){0.08}}
\multiput(2.74,16)(0.08,-0.08){125}{\line(1,0){0.08}}
\put(2.74,4.12){\line(1,0){10}}
\put(4.51,8){\circle*{1.7}}
\put(63.15,12){\circle*{1.7}}
\put(63.15,6){\line(0,1){11}}
\put(53.15,6){\line(0,1){11}}
\put(53,4){\line(1,0){10}}
\put(47,11){\makebox(0,0)[cc]{$=$}}
\put(85.59,11.15){\line(1,0){10}}
\put(90.18,10.92){\circle*{1.7}}
\put(80.71,11.27){\makebox(0,0)[cc]{$\xi\delta^{-1}$}}
\linethickness{0.3mm}
\put(72.71,11){\makebox(0,0)[cc]{$=$}}
\put(40,20){\circle*{1.7}}
\put(40,14){\line(0,1){11}}
\put(30,14){\line(0,1){11}}
\multiput(30,2)(0.08,0.08){125}{\line(1,0){0.08}}
\multiput(30,12)(0.08,-0.08){125}{\line(1,0){0.08}}
\put(30,0.12){\line(1,0){10}}
\end{picture}
\caption{Derivation of a multiplication rule with a crossing} \label{ch5asso}
\end{figure}

Next we verify the third and fourth rule from the top of Figure
\ref{ch5brauerxi3big}.  Suppose that the left hand side of a $U$ rule in one
of these two rows has left bottom corner at node $\bar i$ and right bottom
corner at node $\bar j$, so that $i<j$.  Then, by pre-multiplications with
suitable reflections $r_\c$ for $\c=\eps_k-\eps_i$ and $\c=\eps_j-\eps_k$, the
vertical pairs of the triple above $\bar i$ and $\bar j$ can be made straight,
that is, equal to $\{i,\bar i\}$ and $\{j,\bar j\}$, respectively, without
changing the $\xi$ factors. Set $\b=\eps_j -\eps_i$ or $\b=\eps_j +\eps_i$
according to whether the horizontal pair in the figure has label $0$ or $1$.

First consider the left hand side of the equation in rows 3 and 4 in the right
column of Figure \ref{ch5brauerxi3big}. Here $a =a' r_\b r_\b^*$ and $b = e_\b
b'$ for certain monomials $a'$, $b'$ in $\BrM(\D_n)$ and $\b = \eps_j\pm
\eps_i$.  Now $\nu(ab) = \nu(a') \nu(r_\b r_\b^* e_\b)\nu(b') = \nu(a')
\nu(r_\b^* e_\b)\nu(b') = \xi\delta^{-1} \nu(a') \nu(e_\b)\nu(b') =
\xi\delta^{-1} \nu(a') \nu( b) $, which
proves the rules on rows 3 and 4 in the right column.

To deal with the third and fourth rule from the top left of Figure
\ref{ch5brauerxi3big}, consider the left hand side of a $U$ rule with vertical
pairs $\{i,\bar i\}$ and $\{j,\bar j\}$, where $i<j$ as before, and suppose
that $\{j,\bar j\}$ is the only vertical pair of the triple with label $1$.
As only one decoration in $\nu(a)$ is visible and there are an even number,
there must be another decorated pair.  Without loss of generality, the second
decoration can be taken to be on a vertical pair with bottom end point, say
$\bar k$ distinct from $\bar i$ and $\bar j$.  For, if there would be a bottom
horizontal pair with a bottom end node $\bar k$, the monomial $a$ could be
rewritten so as to end with the monomial $a_2=r_\c r_\c^*$ with
$\c=\pm(\eps_k-\eps_j)$, so that $\nu(a_2)$ is the identity except that the
vertical pairs $\{j,\bar j\}$ and $\{k,\bar k\}$ are labeled $1$; as the pair
with label $1$ of $\nu(a_2)$ ending in $\bar k$ does not change and neither
does its label, the multiplication rule will be as required for $a$ once it
holds for $a_2$ instead of $a$.  Now let $\b = \eps_j\pm\eps_i$ as before and
set $\a = \eps_j-\eps_k$.  There are monomials $a'$, $b'\in\BrM(\D_n)$ such
that $a = a'r_\a r_\a^* e_\b$ and $b = e_\b b'$. By Lemma \ref{lm:AfterBMW},
$\nu(ab) = \nu (a') \nu(r_\a^* r_\a e_\b) \nu(b') = \xi\delta^{-1} \nu(a')
f_{ij}^k \nu(b')$ which establishes the remaining rules of Figure
\ref{ch5brauerxi3big}. Those $U$ rules which are not present (because they
produce no factor $\xi\delta^{-1}$), can be treated in the same way. This ends
the verification of Step (iii).

The parts of Step (iv) not involving $\theta$ are as before or familiar from
the case $\A_{n-1}$.  The simplest instance of the $\theta$ related rule
occurs for $a = e_1$ and $b = e_2$, when $\theta\delta^{-1} \nu(e_1) =
\nu(e_1e_2)$.  By conjugation with Weyl group elements, it generalizes to
$\theta\delta^{-1} \nu(e_\b) = \nu(e_\b e_\b^*)$ for every positive root $\b$,
and, after multiplication with suitable reflections, this accounts for the
last part of (iv).

As for Step (v), by definition of $\nu$, the $n$-connector of $\nu(a)$ has no
decorations in the presence of a scalar factor $\theta$. The rule is in
accordance with $\theta\nu(e_\b^*)  = \theta \nu(e_\b)$.

We conclude that the rules used in Definition \ref{df:BrDalg} are satisfied by
the multiplication turning $\nu$ into an isomorphism.  This establishes that
the latter is well defined.
\end{proof}

\begin{Remarks}\label{rmks:afterProp46}
\rm (i). The inverse of $\nu$ can be described effectively.  Given the Brauer
diagram $\lambda f$ of type $\D_n$ with decorated $n$-connector $f$ and
$\lambda\in\{1,\xi,\theta\}$, we describe how to find a monomial $a = ue_Xzv
\in\Br(\D_n)$ such that $\nu(a)\in\la\delta^{\pm1}\ra \lambda f$.  As
$\theta\nu(ue_Xzv) = \delta^{2-|X|}\nu(ue_{X'}zv)$ where $X' = \{\b,\b^*\mid
\b\in X\}$ if $X$ has no mutually orthogonal mates, and as $\xi\nu(ue_Xzv) =
\delta\nu(ue_Xr_n^*z v)$ whenever $\b\in X$ and $X$ is equal to some $Y(t)$ or
to $Y'(n/2)$, it suffices to consider the case where $\lambda = 1$.  Observe
that $a\emptyset = uX$ and $a^{\op} \emptyset = v^{-1}X$ can be read off from
the horizontal strands at the top and bottom of $f$, respectively.  As
$u,v^{-1}\in D_X$ and $D_X$ is fixed, this determines $u$ and $v$ uniquely.
Finally, $z$ is determined by the vertical strands of
$\nu(u^{-1})f\nu(v^{-1})$.

\nl(ii).
\label{rmk:SpecialBrauerDiagrams}
\rm Two special cases of Brauer diagrams of type $\D_n$ are known. First those
without horizontal pairs, see Remark \ref{rmks:afternu}(ii).  Second, in
\cite{Gre} diagrams for the Temperley-Lieb algebra of type $\D_n$ are
considered. As discussed in \cite[Remark 5.2]{CGWBMW}, the Temperley-Lieb
algebra is a subalgebra of $\Br(\D_n)$, and by the fact that $\nu$ is an
isomorphism, also of $\BrD(\D_n)$.  The diagrams appearing in \cite{Gre} are
precisely the Brauer diagrams without crossings (take into account that $\xi$
also contains a crossing and so does not appear), that is, these are
compositions of the $\nu(e_i)$.
\end{Remarks}

\np Recall the definition of $G_i$ and $E_i$ from Figures \ref{picsGiEi} and
\ref{ch5G0E0}. Write $G_i^{-1}$ for the same tangle as $G_i$ but with the
over crossing changed into an under crossing.  There is a natural map assigning
to a tangle in $\U_n$ that is a composition of the basic tangles $G_i$,
$G_i^{-1}$, and $E_i$, a Brauer diagram of type $\D_n$.  For such a $t\in
\U_n$, let $\psi(t)$ be the Brauer diagram of type $\D_n$ obtained by
identifying over and under crossings and by replacing twists around the pole of
a strand by addition of $1\mod 2$ (that is, applying a change of
decoration) to the strand. This means the element
$\psi(t)$ is the product in $\BrD(\D_n)$ of the Brauer diagrams arising by
substitution of $\nu(r_i)$ for each $G_i$ and for each $G_i^{-1}$ and of
$\nu(e_i)$ for each $E_i$ occurring in an expression of $t$ as a composition
of basic tangles. By construction, $\psi$ is a homomorphism of monoids.

The identification of over and under crossings means that the scalars
$\Xi^{\pm}$ and $\Theta$ are replaced by the scalars $\xi$ and $\theta$,
respectively.  Under $\psi$, the left and right hand sides of Figure
\ref{ch5xidef} become the rule at the right top of Figure
\ref{ch5brauerxi3big} but for the isolated horizontal strand at the bottom.

\begin{Prop} \label{homomKTDn2CDn}
The map $\psi$ induces an $R$-equivariant homomorphism of rings $ \KT(\D_n)\to
\BrD(\D_n)$, also denoted by $\psi$. It satisfies $\psi\circ\phi =
\nu\circ\mu$.
\end{Prop}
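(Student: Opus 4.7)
The plan is to observe that $\psi$ is already a well-defined monoid homomorphism on compositions of basic tangles by construction, and to extend it to an $R$-equivariant ring homomorphism on $\KT(\D_n)$ by specifying $\psi(l) = 1$, $\psi(m) = 0$, $\psi(\delta) = \delta$; this matches the quotient $R \to \ov R = \Z[\delta^{\pm1}]$ defining $\mu$, and makes $\psi(\Xi^\pm) = \xi$, $\psi(\Theta) = \theta$. The real work is to check that each of the seven defining relations (i)--(vii) of Definition~\ref{ch5main} is respected, hence that $\psi$ descends to $\KT(\D_n)$.

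I would check these in turn. Relation (i), the Kauffman skein relation, becomes trivial because over- and under-crossings are identified in $\BrD(\D_n)$, so the two crossing tangles on the left have the same image, while $\psi(m) = 0$ annihilates the right-hand side. Relation (ii) is immediate. For (iii) and (iv), after setting $\psi(l) = 1$, a self-intersection corresponds (after removal of the crossing under $\psi$) to a closed undecorated loop, which is absorbed by a factor of $\delta$ in $\BrD(\D_n)$, matching both the self-intersection and idempotent rules. For the pole-related self-intersection relations (v) and (vi), the pole twist is translated into a decoration on the corresponding pair, while the self-intersection produces the shape of the left-hand side of one of the rules in the third or fourth row of Figure~\ref{ch5brauerxi3big}; the scalar factor $\xi\delta^{-1}$ produced there (or rather, its non-appearance, since the left-hand side already has a $\xi$ in $\psi$) has to be matched with the $l^{\pm1}$ on the right-hand side of (v) and (vi), which becomes trivial after $l \mapsto 1$. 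For (vii), the first closed pole loop relation, the left-hand side contains the tangle $\Theta$ times a strand with a pole twist, whose image is $\theta$ times a decorated diagram; the rule $\theta$ times a decorated horizontal strand equals $\theta$ times the undecorated version (a consequence of the clean-up Step (v) in Definition~\ref{df:BrDalg}, since $\theta$ forces removal of all decorations) matches precisely what is needed.

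Once $\psi$ is established as an $R$-equivariant ring homomorphism $\KT(\D_n) \to \BrD(\D_n)$, the equality $\psi\circ\phi = \nu\circ\mu$ is verified on the generators $g_i, e_i$ of $\BMW(\D_n)$: $\psi(\phi(g_i)) = \psi(G_i) = \nu(r_i) = \nu(\mu(g_i))$ and $\psi(\phi(e_i)) = \psi(E_i) = \nu(e_i) = \nu(\mu(e_i))$. Since all four maps are $R$-equivariant algebra homomorphisms and $\BMW(\D_n)$ is generated by the $g_i$ and $e_i$, multiplicativity extends the equality to all of $\BMW(\D_n)$.

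The main obstacle is relations (v), (vi), (vii). One must carefully translate the pictorial pole twists and pole-related self-intersections into the decoration calculus of $\BrD(\D_n)$ and verify that the resulting diagram identities follow from the multiplication rules in Figure~\ref{ch5brauerxi3big}. In particular, for (v) and (vi) one needs to match the $l^{\pm1}$ factor with the absence of an additional $\xi\delta^{-1}$ after $l \mapsto 1$, and for (vii) one needs the decoration-absorption in Step (v) of Definition~\ref{df:BrDalg}. These verifications are mostly bookkeeping, but the bookkeeping is intricate enough that it is the step where care is most needed.
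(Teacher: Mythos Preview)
Your approach is essentially the same as the paper's: extend $\psi$ $R$-equivariantly via $l\mapsto 1$, $m\mapsto 0$, verify that the defining relations (i)--(vii) are annihilated, and check $\psi\circ\phi = \nu\circ\mu$ on generators. However, some details are garbled and one structural point is missing.

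First, your treatment of (iii) is confused: a self-intersecting curl, once over/under is forgotten, is simply a strand isotopic to a straight line segment; no closed loop is produced and no $\delta$ appears. The relation holds directly because $l\mapsto 1$. More seriously, your discussion of (v) and (vi) invokes a factor $l^{\pm1}$ that is not present in those relations. Both sides of (v) and of (vi) are tangles with no scalar coefficient; under $\psi$ each side becomes $\xi\delta^{-1}$ times the same decorated connector (via the first and fifth, respectively second and sixth, rules in the left column of Figure~\ref{ch5brauerxi3big}), so the two sides agree. There is nothing to cancel against $l^{\pm1}$.

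Second, you gloss over why $\psi$, defined only on compositions of basic tangles $G_i^{\pm1}$, $E_i$, is defined on all of $\KT(\D_n)$. The algebra $\KT(\D_n)$ is a quotient of $R[\U_n]$, not of the span of such compositions. The paper invokes Theorem~\ref{th:surjhomo} (surjectivity of $\phi$) to conclude that monomials in $G_i$, $E_i$ span $\KT(\D_n)$, so that $\psi$ is indeed defined everywhere. You should make this dependence explicit.
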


\begin{proof}
Denote by $\U_n^{(0)}$ the set of $(n,n)$-tangles in $\U_n$ that are a
composition of the basic tangles $G_i$, $G_i^{-1}$, and $E_i$.  As
$R/(\{l-1,m\})R \cong \Z[\delta^{\pm1}]$, the map $\psi$ can be linearly
extended to a map $R[\U_n^{(0)}]\to\BrD(\D_n)$ with $l-1$ and $m$ in its
kernel.

We claim that $\psi$ factors through $\KT(\D_n)$ because the relations of
Definition \ref{df:KTDn} hold for the tangles replaced by their images in
$\BrD(\D_n)$.  The double twist relation is equivalent with the rule in
$\BrD(\D_n)$ that two decorations on one strand cancel each other.  The
Kauffman skein relation (i) holds as by $m = 0$ the relation reduces to the
equality of over and under crossings.  The two partial diagrams satisfying the
commuting relation (ii) are both mapped to the same partial diagram containing
two decorated vertical strands.  The self-intersection relations (iii) hold in
$\BrD(\D_n)$ with $l = 1$.  The
idempotent relation (iv) carries over to the same relation in $\BrD(\D_n)$.
Both sides of the pole-related self-intersection relations (v) and (vi) are
mapped to the same multiple of a decorated $n$-connector by a scalar
$\xi\delta^{-1}$. (For (v), use the first and fifth rule in the left column of
Figure \ref{ch5brauerxi3big}; for (vi) use the second and the sixth from the
same column.)  The first closed pole loop relation (vii) is covered by
rewriting the images under $\psi$ of both sides by use of $\theta$.  Further
details are left to the reader.  It may be worthy of note that the
self-intersection relations (iii) only demand that $l^2 = 1$ and that the
choice $l=1$ corresponds to the complete removal of self-intersections as in
Reidemeister I. Therefore, the defining relations for $\KT(\D_n)$ are in the
kernel of $\psi$.  As $\psi$ is a homomorphism of monoids, even the ideal of
$\Z[\delta^{\pm1}][\U_n^{(0)}]$ generated by the defining relations for
$\KT(\D_n)$ is in the kernel of $\psi$, and so $\psi$ factors through
$\KT(\D_n)$, as claimed.

In view of Theorem \ref{th:surjhomo}, $\KT(\D_n)$ is linearly spanned by
monomials in $G_i$ and $E_i$ and so the map $\psi$ is defined on all of
$\KT(\D_n)$.  We conclude that $\psi$ is well defined as an $R$-equivariant
ring homomorphism $\KT(\D_n)\to\BrD(\D_n)$.

Now $\psi(G_i) = \nu(r_i)$ and $\psi(E_i) = \nu(e_i)$ so $\psi\phi(g_i) =
\nu\mu(g_i)$ and $\psi\phi(e_i) = \nu\mu(e_i)$ for $i=1,\ldots,n$.  In
particular, $\psi\circ\phi$ and $\nu\circ\mu$ are two $R$-equivariant ring
homomorphisms agreeing on the generators of $\BMW(\D_n)$, so they coincide.
As $\nu$ and $\mu$ are surjective, so is $\psi\circ\phi$. Consequently, $\psi$
is surjective.
\end{proof}

This proves Theorem \ref{th:main}(i) and (iv).  We are ready for the last main
result, which settles Theorem \ref{th:main}(iii).  Notice Theorem
\ref{th:main}(ii) is immediate from Proposition \ref{prop:AfterBMW}.

\begin{Thm}\label{th:MainIso}  The map $\phi: \BMW(\D_n)\to
\KT(\D_n)$ is an isomorphism of $R$-algebras.  Both algebras are free of
dimension
$d(n)$, the dimension of the Brauer algebra
of type $\D_n$.  The tensor products of these
algebras with $\Q(l,\delta)$  over $R$ are semisimple.
\end{Thm}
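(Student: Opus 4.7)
The plan is to combine the surjections already in hand with the existence of a basis of $\BMW(\D_n)$ known from \cite{CGWBMW}, using the commutative square $\nu\circ\mu=\psi\circ\phi$ as the crucial link. The key observation is that $\BrD(\D_n)$ has already been identified (via $\nu$) with $\Br(\D_n)$, which is free of rank $d(n)$ by Theorem \ref{th:main}(\ref{rankKTDn}), so any generating set of $d(n)$ elements in $\BrD(\D_n)$ must be an $R$-basis.

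First I would invoke \cite[Theorem 1.1]{CGWBMW} to get both that $\BMW(\D_n)$ is a free $R$-module of rank $d(n)$ and that $\KT(\D_n)$ is generated over $R$ by at most $d(n)$ elements (the upper bound mentioned in the introduction). Then I would pick a basis $b_1,\ldots,b_{d(n)}$ of $\BMW(\D_n)$ and chase it through the commutative square of Theorem \ref{th:main}(iv). Since $\phi$ is surjective by Theorem \ref{th:surjhomo}, the elements $\phi(b_1),\ldots,\phi(b_{d(n)})$ span $\KT(\D_n)$. Applying $\psi$ and using $\psi\circ\phi=\nu\circ\mu$, these map to $\nu(\mu(b_i))$, which span $\nu(\Br(\D_n))=\BrD(\D_n)$ because $\mu$ is surjective and $\nu$ is an isomorphism.

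Next, since $\BrD(\D_n)$ is a free $R$-module of rank $d(n)$ and is generated by exactly $d(n)$ elements $\psi(\phi(b_i))$, these elements form an $R$-basis of $\BrD(\D_n)$ (a standard fact: over any commutative ring, a surjection from $R^{d(n)}$ onto a free module of rank $d(n)$ is an isomorphism, since the determinant of the matrix of the surjection must be a unit). Consequently, any $R$-linear dependence among the $\phi(b_i)$ in $\KT(\D_n)$ would descend to a dependence among the $\psi(\phi(b_i))$ in $\BrD(\D_n)$ and therefore must be trivial. Hence $\{\phi(b_i)\}$ is an $R$-basis of $\KT(\D_n)$, so $\KT(\D_n)$ is free of rank $d(n)$ and $\phi$ is an $R$-module isomorphism; since it is already known to be an $R$-algebra homomorphism, it is an $R$-algebra isomorphism.

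Finally, for semisimplicity of $\BMW(\D_n)\otimes_R\Q(l,\delta)$ (equivalently $\KT(\D_n)\otimes_R\Q(l,\delta)$ via $\phi$), I would appeal to the results of \cite{CGW}: over the field $\Q(l,\delta)$ the element $m$ is invertible, so by the Remark after Definition~\ref{BMW-def} only the first five defining relations survive; standard deformation arguments (Tits' lemma) compare this specialization to the specialization at $l=1$, $m=0$, which yields $\Br(\D_n)\otimes\Q(\delta)$, whose semisimplicity over the generic field is known.  The main obstacle is really the linear-independence step; once one trusts the standard fact that a surjective endomorphism of a finitely generated free module over a commutative ring is an isomorphism, the entire argument reduces to a diagram chase, with no further tangle-theoretic combinatorics needed.
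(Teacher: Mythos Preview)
Your overall strategy matches the paper's: use the surjectivity of $\phi$ (Theorem~\ref{th:surjhomo}), the freeness of $\BMW(\D_n)$ from \cite{CGWBMW}, and the commutative square $\psi\circ\phi=\nu\circ\mu$ to conclude that $\phi$ is an isomorphism. However, there is a genuine gap at the linear-independence step.

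You write that ``$\BrD(\D_n)$ is a free $R$-module of rank $d(n)$'', but this is false: $\BrD(\D_n)$ is free of rank $d(n)$ over $\ov R=\Z[\delta^{\pm1}]$, not over $R$. As an $R$-module it is annihilated by the ideal $I=(l-1,m)$, so it is certainly not free. Consequently your ``standard fact'' about surjections between free modules of equal rank does not apply to the composite $R^{d(n)}\to\BrD(\D_n)$. What your argument actually shows is only that if $\sum_i r_i\,\phi(b_i)=0$ in $\KT(\D_n)$, then applying $\psi$ gives $\sum_i \bar r_i\,\psi(\phi(b_i))=0$, whence each $\bar r_i=0$ in $\ov R$, i.e., $r_i\in I$. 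This yields $\ker\phi\subseteq I\cdot\BMW(\D_n)$, but \emph{not} $\ker\phi=0$; without further input there is no reason a nontrivial relation with all coefficients in $I$ could not exist. The paper closes exactly this gap by invoking an argument ``similar to \cite[Lemma~4.2]{CGWBMW}'' applied to $\nu^{-1}\circ\psi$ in order to establish that $\KT(\D_n)$ is itself a free $R$-module; once that is known, a surjection from the free $R$-module $\BMW(\D_n)$ of the same rank is automatically an isomorphism. So the missing ingredient in your proposal is precisely the lifting step from $\ov R$-linear independence to $R$-linear independence, and this is not a formality: it requires additional structure (as supplied by the cited lemma) beyond the bare commutative square.

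For semisimplicity, the paper simply cites \cite[Theorem~1.1]{CGWBMW}, which already contains the semisimplicity of $\BMW(\D_n)\otimes_R\Q(l,\delta)$; your deformation sketch via Tits' lemma points in the right direction but is superfluous given that citation.
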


\begin{proof}
In view of surjectivity of $\psi$, the dimension of $\KT(\D_n)$ is at least
$d(n)$. By an argument similar to the one for \cite[Lemma 4.2]{CGWBMW} applied
to $\nu^{-1}\circ \psi$, we see that $\KT(\D_n)$ is a free $R$-module.  But
$\phi$ is surjective by Theorem \ref{th:surjhomo}, and $\BMW(\D_n)$ is free of
dimension $d(n)$ as well, by \cite[Theorem 1.1]{CGWBMW}.  Therefore, $\phi$ is
an isomorphism.  Also by \cite[Theorem 1.1]{CGWBMW}, $\BMW(\D_n)$, when
tensored with $\Q(l,\delta)$, is semi-simple. This implies the semisimplicity
statement.
\end{proof}

\begin{Remarks}\label{rmk:oldone}
\rm
(i).  Our work also proves the corresponding result for $\BMW(\A_{n-1})$
known from \cite{MorWas}:
\label{th:MainIsoAn}  
the BMW algebra $\BMW(\A_{n-1})$ is isomorphic to the tangle algebra
$\KT(\A_n)$ given via the map $\phi$ of Proposition \ref{homomKTDn2CDn}.  The
dimension is $n!!$, the same as the dimension of the Brauer algebra of type
$\A_{n-1}$.  The proof of these results obtainable from the arguments in the
proof of Theorem~\ref{th:MainIso} differs from the original one and is far
easier than the one for $\D_n$ as there is no $e_i^*$, no $\Xi^+$, and no
$\Theta$. 

\nl(ii).
\label{rmk:(2)}
In Definition \ref{ch5main} we encountered the $R$-algebra $\KT(\D_n)^{(2)}$
properly containing $\KT(\D_n)$.  There are corresponding versions of the
other algebras of Theorem \ref{th:main} and the maps such that the following
diagram commutes.
\begin{center}
\begin{tabular}{ccccccc}
$\BMW(\D_n)^{(2)}$&$ \overset{\displaystyle \mu^{(2)}}{\longrightarrow}$&$\Br(\D_n)^{(2)}$\\
$\phi^{(2)}\ \downarrow$&&$\downarrow\ \nu^{(2)}$\\
$\KT(\D_n)^{(2)}$&$\overset{\displaystyle \psi^{(2)}}{\longrightarrow}$&$\BrD(\D_n)^{(2)}$
\end{tabular}
\end{center}

Here $\BMW(\D_n)^{(2)}$ denotes the algebra over $R[\Xi^+,\Theta]$,
specified by the relations of Lemma \ref{ch5KT0}, that is generated by
$g_i$ and $e_i$ $(i=1,\ldots,n)$ subject to the relations known for
$\BMW(\D_n)$ from Definition \ref{BMW-def} and
\begin{eqnarray*}
\Xi^+ e_1 & = & \delta g_2e_1,\qquad\qquad
\Theta e_1  =  \delta e_1e_2.
\end{eqnarray*}
Similarly, $\Br(\D_n)^{(2)}$ denotes the algebra over $\Z[\Lambda]$ generated
by $r_i$ and $e_i$ $(i=1,\ldots,n)$ subject to the relations known for
$\Br(\D_n)$ in \cite[Table 1]{CFW} and
\begin{eqnarray*}
\xi e_1 & = & \delta r_2e_1,\qquad\qquad
\theta e_1  =  \delta e_2 =  \delta e_1e_2.
\end{eqnarray*}
The algebra $\BrD(\D_n)^{(2)}$ is linearly spanned over $\Z[\Lambda]$ by
$T_n$. It is not free over this ring, but free with basis $T_n\cup\xi T_n\cup
\theta T_n^0$ over $\Z[\delta^{\pm1}]$, and so its dimension is $(2^n+1)n!! $
if $n\ge1$.  
For $n=0$, the algebra
$\BrD(\D_n)^{(2)}$ coincides with $\Z[\Lambda]$ which is free of dimension 3
over $\Z[\delta^{\pm1}]$.

\label{lm:AfterBrauer}
The following relations hold in $\Br(\D_n)$ for all $\a$, $\b\in\Phi^+$.
\begin{eqnarray*}
r_{\a^*}e_\a e_\b &=& e_\a  r_{\b^*} e_\b, \qquad 
 e_{\a^*}e_\a e_\b = e_\a e_{\b^*} e_\b,
\end{eqnarray*}
They can be used to show that $\BrD(\D_n)$ embeds in $\BrD(\D_n)^{(2)}$, and
that $\BMW(\D_n)$ embeds in $\BMW(\D_n)^{(2)}$.  All maps in the diagram are
$R$-equivariant ring homomorphisms.  The vertical maps $\phi^{(2)}$ and
$\nu^{(2)}$ are algebra isomorphisms again.  The action $\sigma$ of
$\BrM(\D_n)$ on $\AO$, see \cite[Theorem 3.6(i)]{CFW} extends to an action of
$\BrM(\D_n)^{(2)}$ on $\AO$ with
\begin{eqnarray*}
\xi X & = & X,\ \qquad
\theta X  =  X\cup \{\b^*\mid\b\in X\}.
\end{eqnarray*}

\end{Remarks}

\end{document}